\pgfplotsset{compat=1.15}
\newtheorem{theorem}{Theorem}[section] %
\newtheorem{corollary}[theorem]{Corollary} %
\newtheorem{lemma}[theorem]{Lemma} %
\newtheorem{proposition}[theorem]{Proposition} %
\newtheorem{conjecture}[theorem]{Conjecture} %
{\theoremstyle{remark} %
  \newtheorem{remark}[theorem]{Remark}} %
{\theoremstyle{definition} %
  \newtheorem{definition}[theorem]{Definition} %
  \newtheorem{example}[theorem]{Example} %
}
\newtheorem{introthm}{Theorem}
\newcommand\Q{\mathbf{Q}}
\newcommand\C{\mathbf{C}}
\newcommand\R{\mathbf{R}}
\newcommand{\git}{\mathbin{
  \mathchoice{/\mkern-6mu/}
    {/\mkern-6mu/}
    {/\mkern-5mu/}
    {/\mkern-5mu/}}}
\newcommand{\PP}[0]{\ensuremath{\mathbf{P}}}
\newcommand{\CC}[0]{\ensuremath{\mathbf{C}}}
\newcommand{\ZZ}[0]{\ensuremath{\mathbf{Z}}}
\newcommand{\tvarphi}[0]{\ensuremath{\widetilde{\varphi}}}
\newcommand{\ttau}[0]{\ensuremath{\widetilde{\tau}}}
\newcommand\h{\operatorname{H}}
\newcommand{\res}[0]{\ensuremath{\operatorname{res}}}
\newcommand{\Fix}[0]{\ensuremath{\operatorname{Fix}}}
\newcommand{\Aut}[0]{\ensuremath{\operatorname{Aut}}}
\newcommand{\prim}[0]{\ensuremath{\operatorname{prim}}}
\newcommand{\Lin}[0]{\ensuremath{\operatorname{Lin}}}
\newcommand{\rank}[0]{\ensuremath{\operatorname{rank}}}
\newcommand{\drank}[0]{\ensuremath{\operatorname{diff.rank}}}
\newcommand{\spar}[0]{\ensuremath{\operatorname{Spar}}}
\newcommand{\vars}[0]{\ensuremath{\operatorname{Vars}}}
\newcommand{\PGL}[0]{\ensuremath{\operatorname{PGL}}}
\newcommand{\GL}[0]{\ensuremath{\operatorname{GL}}}
\newcommand{\GP}[0]{\ensuremath{\operatorname{GP}}}
\newcommand{\PGP}[0]{\ensuremath{\operatorname{PGP}}}
\newcommand{\GT}[0]{\ensuremath{\operatorname{GT}}}
\newcommand{\PGT}[0]{\ensuremath{\operatorname{PGT}}}
\newcommand{\Gal}[0]{\ensuremath{\operatorname{Gal}}}
\begin{document}

\title[On a Torelli Principle for Klein hypersurfaces]{On a Torelli Principle for automorphisms of Klein hypersurfaces}

\author[V. Gonz\'alez-Aguilera]{V\'\i ctor Gonz\'alez-Aguilera}
\address{Departamento de Matem\'atica, Universidad T\'ecnica
  Fe\-de\-ri\-co San\-ta Ma\-r\'\i a, Valpara\'\i
  so, Chile}  \email{victor.gonzalez@usm.cl}

\author[A. Liendo]{Alvaro Liendo} %
\address{Instituto de Matem\'atica y F\'isica, Universidad de Talca,
  Casilla 721, Talca, Chile} %
\email{aliendo@utalca.cl}

\author[P. Montero]{Pedro Montero}
\address{Departamento de Matem\'atica, Universidad T\'ecnica
  Fe\-de\-ri\-co San\-ta Ma\-r\'\i a, Valpara\'\i
  so, Chile}  \email{pedro.montero@usm.cl}

\author[{R. Villaflor Loyola}]{Roberto Villaflor Loyola}
\address{Departamento de Matem\'atica, Universidad T\'ecnica
  Fe\-de\-ri\-co San\-ta Ma\-r\'\i a, Valpara\'\i
  so, Chile}  
\email{roberto.villaflor@usm.cl}

\date{\today}

\thanks{{\it 2020 Mathematics Subject
    Classification}: 14C30, 14C34, 14J50, 14J70.\\
  \mbox{\hspace{11pt}}{\it Key words}: Automorphism groups of smooth hypersurfaces, automorphisms of Hodge structures.\\
  \mbox{\hspace{11pt}} The second and third authors were partially supported by Fondecyt Projects 1200502 and 1231214. The fourth author was supported by the Fondecyt ANID postdoctoral grant 3210020.}

\begin{abstract}
Using a refinement of the differential method introduced by Oguiso and Yu, we provide effective conditions under which the automorphisms of a smooth degree $d$ hypersurface of $\mathbf{P}^{n+1}$ are given by generalized triangular matrices. Applying this criterion we compute all the remaining automorphism groups of Klein hypersurfaces of dimension $n\geq 1$ and degree $d\geq 3$ with $(n,d)\neq (2,4)$. We introduce the concept of extremal polarized Hodge structures, which are structures that admit an automorphism of large prime order. Using this notion, we compute the automorphism group of the polarized Hodge structure of certain Klein hypersurfaces that we call of Wagstaff type, which are characterized by the existence of an automorphism of large prime order. For cubic hypersurfaces and some other values of $(n,d)$, we show that both groups coincide (up to involution) as predicted by the Torelli Principle.
\end{abstract}

\maketitle

\section*{Introduction}

Let $\mathcal{H}_d^n$ be the moduli space of smooth degree $d$ hypersurfaces in $\mathbf{P}^{n+1}$. Classically, this moduli space is constructed as the GIT quotient $\mathcal{H}_d^n=\mathcal{U}\git \operatorname{PGL}_{n+2}(\mathbf{C})$ of the open subset $\mathcal{U}$, corresponding to the complement of the discriminant divisor $\Delta\subseteq \mathbf{P}\operatorname{H}^0(\mathbf{P}^{n+1},\mathcal{O}_{\mathbf{P}^{n+1}}(d))$, by the natural action of the reductive group $\operatorname{PGL}_{n+2}(\mathbf{C})$. It is well-known after \cite{MM64,Cha78} that the automorphism group of elements in $\mathcal{H}_d^n$ is a finite subgroup of $\operatorname{PGL}_{n+2}(\mathbf{C})$ as long as $n\geq 1$, $d\geq 3$ and $(n,d)\notin \{(1,3),(2,4)\}$. The general elements in $\mathcal{H}_d^n$ have trivial automorphism group and thus they correspond to smooth points in $\mathcal{H}_d^n$. On the other hand, a finite group $G$ induces a (possibly empty) sublocus $\mathcal{H}_d^n(G)$ corresponding to smooth hypersurfaces $X\subseteq \mathbf{P}^{n+1}$ with $G\subseteq \operatorname{Aut}(X)$. In the case $G\subseteq \operatorname{PGL}_{n+2}(\mathbf{C})$ is not generated by pseudo-reflections we have $\mathcal{H}_d^n(G) \subseteq \operatorname{Sing}(\mathcal{H}_d^n)$, and thus in order to study the components of $\operatorname{Sing}(\mathcal{H}_d^n)$ it is natural to analyze which finite groups $G$ 
 are \emph{admissible} (i.e., for which $\mathcal{H}_d^n(G)\neq \varnothing$). Since the correspondence $G\mapsto \mathcal{H}_d^n(G)$ is inclusion reversing, one may expect that the minimal admissible groups give information about the irreducible components of the singular locus of the moduli space $\mathcal{H}_d^n$. In the hierarchy of finite groups the simplest ones are cyclic groups $G\simeq \mathbf{Z}\slash m\mathbf{Z}$ and their admissibility in $\mathcal{H}_d^n$ was studied in \cite{GL11,gl13,Zhe22}. Moreover in \cite{gl13} the first two authors proved that the only smooth projective hypersurface in $\mathcal{H}_d^n$ admitting an automorphism of prime order $p>(d-1)^n$ is the Klein hypersurface
\[
X=\{K:=x_0^{d-1}x_1+x_1^{d-1}x_2+\cdots+x_n^{d-1}x_{n+1}+x_{n+1}^{d-1}x_0=0\}\subseteq \mathbf{P}^{n+1},
\]
(i.e., $\mathcal{H}_d^n(\mathbf{Z}\slash p\mathbf{Z})=\{X\}$ is $0$-dimensional) and in that case $n+2$ is also prime, and $p$ corresponds to 
\[
p=\frac{(d-1)^{n+2}+1}{d}.
\]
Such prime numbers are known as generalized Wagstaff primes of base $d-1$ and it is conjectured there are infinitely many of them \cite{dubner2000primes}. For $d=-1$ they correspond to Mersenne primes, while for $d=3$ they are the classical Wagstaff primes. Because of this, we will say that the corresponding Klein hypersurface is of \emph{Wagstaff type}. In order to understand which irreducible components of $\operatorname{Sing}(\mathcal{H}_d^n)$ contain the Klein hypersurface one would like to know which other groups act faithfully on it, which leads us to analyze its full automorphism group.

The study of automorphisms of Klein hypersurfaces is a classical subject and begins with the paper by Felix Klein \cite{Klein} studying the automorphism group of the quartic Klein curve $X\subseteq \mathbf{P}^2$, where $d=4$ and $n=1$, and proving that $\operatorname{Aut}(X)\simeq \operatorname{PSL}_2(\mathbf{F}_7)$ in that case. The case of the Klein cubic curve $X\subseteq \mathbf{P}^2$ is also classical. A straightforward computation shows that $\nu:X\to X,\;(x_0:x_1:x_2)\mapsto (x_1:x_2:x_0)$ is an automorphism order 3 fixing an inflection point of the elliptic curve and thus it follows from \cite[Theorem III.10.1]{Sil86} that $\operatorname{Aut}(X)\simeq X \rtimes \mathbf{Z}\slash 6\mathbf{Z}$. In \cite{Adl78}, Adler proved that the automorphism group of the Klein cubic threefold is isomorphic to $\operatorname{PSL}_2(\mathbf{F}_{11})$. The automorphism group of the Klein cubic surface was computed by Dolgachev in \cite[Theorem 9.5.8]{Dol12}, who showed that it is $\mathfrak{S}_5$. The automorphism groups of the remaining Klein curves were computed by Harui in \cite[Proposition 3.5]{Har19} analyzing finite subgroups of $\operatorname{PGL}_3(\mathbf{C})$, while the automorphism group of the Klein quintic threefold was computed by Oguiso and Yu in \cite[Theorem~3.8]{OY19}. In order to describe the automorphism group in the latter two cases, let us introduce the following notation: Given $n\geq 1$ and $d\geq 3$ we define $$m:=\frac{(d-1)^{n+2}-(-1)^{n+2}}{d}.$$ A straightforward computation shows that the degree $d$ Klein hypersurface $X\subseteq \mathbf{P}^{n+1}$ admits an automorphism  $\sigma$ of order $m$ and an automorphism $\nu$ of order $n+2$ given by $\sigma = \operatorname{diag}(\zeta_{dm},\zeta_{dm}^{1-d},\ldots,\zeta_{dm}^{(1-d)^{n+1}})$, i.e.,
$$ \sigma\colon X\to X, \qquad  (x_0:x_1:\cdots:x_{n+1})\mapsto (\zeta_{dm} x_0:\zeta_{dm}^{1-d}x_1:\cdots:\zeta_{dm}^{(1-d)^{n+1}}x_{n+1}),
$$
and by
$$ \nu\colon X\to X, \qquad  (x_0:x_1:\cdots:x_{n+1})\mapsto (x_1:x_2:\cdots:x_{n+1}:x_0)\,,
$$
respectively. Hence, the group $$\mathcal{K}(n,d):=(\ZZ/m\ZZ)\rtimes \ZZ/(n+2)\ZZ$$ acts faithfully on $X$, and Harui and Oguiso-Yu showed that $\mathcal{K}(n,d)$ is the full automorphism group in the cases $(n,d)\in \{(1,d),(3,5)\}$, where $d\geq 5$.

In general, there are two commonly used methods to compute $\operatorname{Aut}(X)$ for an explicit hypersurface $X\subseteq \mathbf{P}^{n+1}$. The first one is based on the analysis of an induced action of this group on some lattice. For instance, the Picard lattice for cubic surfaces \cite{Dol12} or cohomology lattices for quartic surfaces (see e.g. \cite{Huy16,Kon20}) and cubic fourfolds (see e.g. \cite{LZ22}). The second one relies on a clever use of suitable differential operators in order to constrain the shapes of the matrices defining automorphisms of $X$. The latter method was used by Kontogeorgis in \cite{Kon02} to compute the automorphism group of Fermat-like varieties, by Poonen in \cite{Poo05} to exhibit explicit equations for hypersurfaces with trivial automorphism group, and by Oguiso and Yu in \cite{OY19} to  compute automophism groups of quintic threefolds. In this last work, Oguiso and Yu gave a general treatment of the method and named it as the \emph{differential method}.

The first result of this article, which relies on a refinement of the differential method, provides strong restrictions on the shape of the automorphism group of a smooth hypersurface under certain sparsity conditions on the monomials of the equation with respect to a given basis (see \cref{sparsity} and \cref{def:sparcity} for details).

\begin{introthm}[see \cref{theopseudotriang}]
\label{thm0}
Let $X=V(F)\subset \PP(V)$ be a smooth hypersurface of dimension $n\geq 1$ and degree $d\geq 3$ with $(n,d)\neq (1,3),(2,4)$. Let also $\beta$ and $\beta^*$ be dual bases of $V$ and $V^*$, respectively. If $\spar(F)>4$ and $(\beta^*,\le_F)$ is a poset then every element of $\Aut(X)$ is a generalized triangular matrix relative to the basis $\beta$ (i.e., of the form $P_1TP_2$ where $P_1$ and $P_2$ are permutation matrices and $T$ is upper triangular).
\end{introthm}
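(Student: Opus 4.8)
The plan is to reduce the statement to a linear-algebra assertion about a single lift of each automorphism, to read off from the differential method a combinatorial constraint linking the support of that lift to the relation $\le_F$, and finally to turn the poset hypothesis into the factorization $P_1TP_2$. Write $\beta^{*}=\{y_{0},\dots,y_{n+1}\}$ for the dual basis. First I would invoke the classical rigidity theorem of \cite{MM64,Cha78}: since $n\geq 1$, $d\geq 3$ and $(n,d)\neq (1,3),(2,4)$, every automorphism of $X$ is induced by a projective transformation, so each $g\in\Aut(X)$ lifts to some $\tilde g\in\GL(V)$ with $\tilde g\cdot F=\lambda F$ for a scalar $\lambda\in\C^{*}$. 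Since being a generalized triangular matrix is unaffected by rescaling, transposition and inversion, it is enough to show that the matrix of $\tilde g$ acting on $V^{*}=\langle y_{0},\dots,y_{n+1}\rangle$ --- the natural home of $\le_F$ --- is generalized triangular.

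The heart of the argument is the refined differential method. The identity $\tilde g\cdot F=\lambda F$ forces the subspace $W=\langle\partial_{0}F,\dots,\partial_{n+1}F\rangle\subseteq\operatorname{Sym}^{d-1}(V^{*})$ to be $\tilde g$-invariant, and since $X$ is smooth the $n+2$ partial derivatives $\partial_iF$ are linearly independent; hence $y_i\mapsto\partial_iF$ identifies $W$ with $V^{*}$ as $\tilde g$-modules, up to a transpose and the scalar $\lambda$. Writing $\tilde g\cdot y_i=\sum_k c_{ik}y_k$, expanding both the $\partial_jF$ and the polynomials $\tilde g\cdot(\partial_iF)$ in monomials in $y_{0},\dots,y_{n+1}$, and comparing the coefficients of those monomials that --- precisely because $\spar(F)>4$ --- occur in only one of the $\partial_jF$, one is driven to the required constraint: this is exactly the output of the refined differential method (\cref{sparsity}, following Oguiso and Yu \cite{OY19}; see \cref{def:sparcity} for the definitions of $\spar$ and $\le_F$), namely that there is a permutation $\pi\in\mathfrak{S}_{n+2}$ with $c_{ik}\neq 0\Rightarrow y_k\le_F y_{\pi(i)}$. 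I expect this to be the main obstacle: one must control the mutual interference between the monomials contributed by the different partial derivatives and exclude the degenerate configurations (of Fermat- or Klein-type) for which an automorphism need not respect $\le_F$, and the numerical threshold $4$ is precisely what makes this go through; the exclusion $(n,d)\neq (1,3),(2,4)$ re-enters through Step~1.

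It remains to sort. Since $(\beta^{*},\le_F)$ is a poset, it admits a linear extension. Permuting the rows of the matrix $(c_{ik})$ by $\pi$, and then reordering both its rows and its columns by (the reverse of) such a linear extension, the constraint above turns into the vanishing of every strictly sub-diagonal entry; thus the resulting matrix $T$ is upper triangular, and unwinding the two permutations gives $(c_{ik})=P_1TP_2$ with $P_1,P_2$ permutation matrices (and, since $\tilde g$ is invertible, $c_{i,\pi(i)}\neq 0$ for every $i$, so $\pi$ is genuinely realised). Hence $\tilde g$, and therefore $g$, is a generalized triangular matrix relative to $\beta$; as $g\in\Aut(X)$ was arbitrary, the theorem follows.
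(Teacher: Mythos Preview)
Your overall plan --- lift via Matsumura--Monsky, extract from the differential method a support constraint of the form ``$c_{ik}\neq 0\Rightarrow y_k\le_F y_{\pi(i)}$'' for some permutation $\pi$, then sort by a linear extension of the poset --- is exactly the paper's. The gap is the middle step: you assert the constraint and the permutation but do not derive them. Your proposed mechanism, comparing monomials of $\tilde g\cdot(\partial_iF)$ against those of the $\partial_jF$, does not by itself single out any permutation $\pi$; knowing that $W=\langle\partial_jF\rangle$ is $\tilde g$-stable only hands you back the matrix of $\tilde g$ on $W$, which is the object you are trying to constrain. The references you invoke (\cref{sparsity}, \cref{def:sparcity}) are definitions, not results, so ``exactly the output of the refined differential method'' is a promissory note.

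What the paper actually uses is an invariant you never name: the differential rank. Since $\spar(F)>4$, each linear combination $\partial F/\partial(\tvarphi^*x_i)=\sum_j\tvarphi_{ij}\,\partial_jF$ still has sparsity $>2$, so \cref{lem:vars} gives $\drank=\#\vars$ for it; together with \cref{diff-method} this yields the key numerical identity
\[
\#\vars\Bigl(\tfrac{\partial F}{\partial x_i}\Bigr)\;=\;\#\!\!\bigcup_{\{j\,:\,\tvarphi_{ij}\neq 0\}}\!\!\vars\Bigl(\tfrac{\partial F}{\partial x_j}\Bigr).
\]
The permutation $\pi$ is then \emph{constructed} by induction on the level $N=\drank(\partial_iF)$: at the minimal level the identity forces a single nonzero entry per row (two distinct indices with equal minimal $\vars$-sets would violate antisymmetry of $\le_F$); at level $N$, invertibility of $\tvarphi$ forces some $j$ with $\drank(\partial_jF)=N$ in the support of row $i$, the identity forces every other index $j_\ell$ in that support to satisfy $\vars(\partial_{j_\ell}F)\subseteq\vars(\partial_jF)$, i.e.\ $x_{j_\ell}\le_F x_j$, and antisymmetry again makes this $j$ unique --- this $i\mapsto j$ is your $\pi$. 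Both hypotheses are genuinely consumed here ($\spar(F)>4$ to apply \cref{lem:vars} to first derivatives, the poset axiom for uniqueness of $j$), and this inductive construction is precisely the content you left as an expectation.
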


The above result is specially useful when the shape of a given equation is rather simple. For instance, we can use it to retrieve the automorphism group of most Fermat hypersurfaces (see \cref{aut-fermat}), and to compute the automorphism group of most Delsarte hypersurfaces (see \cref{aut-delsarte}) and most Klein hypersurfaces. More precisely, with the only exception of the Klein quartic surface (cf. \cref{rmkk3}), we compute the automorphism group of all remaining Klein hypersurfaces that are not covered in the literature up to our knowledge (see \cref{theoautklein1} and \cref{theoautklein2}). Thus, the automorphism groups of Klein hypersurfaces can be summarized as follows.

\begin{introthm}
\label{thmB}
    Letting $n\geq 1$ and $d\geq 3$. Then, the automorphism group of the Klein hypersurface $X$ of dimension $n$ and degree $d$ is isomorphic to 
    $$\Aut(X)=\left\{
    \begin{array}{cl}
    X\rtimes{\ZZ/6\ZZ} & \mbox{if } n=1, d=3, \\
    \operatorname{PSL}_2(\mathbf{F}_{7}) & \mbox{if } n=1, d=4, \\
    \mathcal{K}(n,d) & \mbox{if } n=1, d\geq 5, \\    
    \mathfrak{S}_5 & \mbox{if } n=2, d=3, \\
    ? \ \ (+\infty) & \mbox{if } n=2, d=4, \\
    \mathcal{K}(n,d) & \mbox{if } n=2, d\geq 5, \\
    \operatorname{PSL}_2(\mathbf{F}_{11}) & \mbox{if } n=3, d=3, \\
    \mathcal{K}(n,d) & \mbox{if } n=3, d\geq 4, \\
    \mathcal{K}(n,d) & \mbox{if } n\geq 4, d\geq 3. \\
\end{array}\right.$$
\end{introthm}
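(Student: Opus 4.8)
The plan is to prove the non-obvious inclusion $\Aut(X)\subseteq G$, where $G$ is the group listed on the relevant line, the reverse inclusion being the explicit computation with $\sigma$ and $\nu$ recalled in the introduction (for $(n,d)=(1,3)$ one additionally includes the translations of the elliptic curve $X$). Several entries are already in the literature and I would simply cite them: $(1,3)$ follows from the theory of elliptic curves carrying an automorphism of order $3$ fixing a flex (cf.\ \cite{Sil86}), $(1,4)$ is Klein's theorem \cite{Klein}, $(1,d)$ with $d\ge 5$ is \cite{Har19}, $(2,3)$ is \cite{Dol12}, $(3,3)$ is \cite{Adl78}, and $(3,5)$ is \cite{OY19}. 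The line $(2,4)$, the Klein quartic $K3$ surface, is genuinely of a different nature: its automorphism group is infinite (whence the ``$+\infty$'' and the question mark), and none of what follows applies to it. This leaves the cases whose answer is $\mathcal{K}(n,d)$ and which are not yet recorded, namely $n=2$ with $d\ge 5$; $n=3$ with $d=4$ or $d\ge 6$; and $n\ge 4$ with $d\ge 3$.

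For these cases with $n\ge 3$, I would feed the Klein polynomial $K$ together with the standard basis $\beta=(x_0,\dots,x_{n+1})$ into \cref{thm0}. The hypotheses must be checked: $K$ is a sum of exactly $n+2$ monomials and one computes $\spar(K)=n+2$, so $\spar(K)>4$ precisely because $n\ge 3$; and one must verify that $(\beta^*,\le_F)$ is a poset, a finite combinatorial check on the monomials and partial derivatives of $K$ which I expect to be the subtle point, since the cyclic shape of $K$ is exactly the sort of configuration the poset condition must survive. Granting this, \cref{thm0} yields that every element of $\Aut(X)$ is a generalized triangular matrix with respect to $\beta$. The next step is a \emph{descent}: one shows that a generalized triangular matrix preserving $V(K)$ up to scalar is in fact a \emph{monomial} (generalized permutation) matrix. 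Here I would use that the exponent matrix of $K$ equals $(d-1)\Id+S$ with $S$ the cyclic shift, which is invertible for $d\ge 3$ (indeed $\det((d-1)\Id+S)=(d-1)^{n+2}-(-1)^{n+2}=dm$), together with the fact that in each monomial the variable raised to the top exponent $d-1\ge 2$ severely constrains which entries of the matrix may be nonzero; this forces the matrix to be monomial. It is exactly at this step that one needs $(n,d)\ne(3,3)$: the Klein cubics of dimension $\le 3$ carry strictly larger, non-monomial automorphism groups for the geometric reasons behind \cite{Adl78,Dol12,Sil86}, which is why those cases are treated separately.

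Once automorphisms are known to be monomial matrices, the computation is the standard one for Delsarte-type polynomials. The diagonal matrices preserving $K$ up to scalar are the solutions of $((d-1)\Id+S)v\equiv\mu(1,\dots,1)$; since the exponent matrix is invertible with determinant $dm$ (and its cokernel is cyclic), these form, modulo the central scalars, a cyclic group of order $m$, generated by $\sigma$. The coordinate permutations preserving the monomial pattern $\{x_i^{d-1}x_{i+1}\}_{i}$ are exactly the rotations — a reflection would have to interchange the exponents $d-1$ and $1$, impossible for $d\ge 3$ — which contributes the factor $\ZZ/(n+2)\ZZ$ generated by $\nu$. Hence the monomial matrices preserving $V(K)$ project onto $\mathcal{K}(n,d)$, and combined with the previous paragraph $\Aut(X)=\mathcal{K}(n,d)$ whenever $n\ge 3$ and $(n,d)\ne(3,3)$. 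For the remaining line $n=2$, $d\ge 5$, where $\spar(K)=4$ so \cref{thm0} does not apply as stated, I would invoke the sharper form of the differential method tailored to this borderline sparsity (the arguments behind \cref{theoautklein1} and \cref{theoautklein2}), again reducing to monomial matrices and then running the same Delsarte computation.

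The main obstacle is twofold, and both parts are genuinely non-uniform in $(n,d)$. The first is the verification that $(\beta^*,\le_F)$ is a poset for $K$: this is where the precise definition of $\le_F$ must be played against the cyclic combinatorics of the Klein monomials, and it is the linchpin that makes \cref{thm0} applicable at all. The second is the descent from generalized triangular to monomial matrices, which must necessarily fail for $(2,4)$ and for the low-dimensional Klein cubics; consequently the proof cannot be a single uniform argument but is a patchwork of the refined differential method for $n\ge 3$, its borderline variant for $n=2$ with $d\ge 5$, and citations to the literature for the geometrically special small cases.
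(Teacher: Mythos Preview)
Your proposal contains a genuine computational error that inverts the entire case analysis. You claim $\spar(K)=n+2$, but the sparsity of a form is the minimum $\ell^1$-distance between its \emph{monomials}, not the number of monomials. The monomials of $K$ are $x_i^{d-1}x_{i+1}$, and the closest pair are consecutive ones, say $x_0^{d-1}x_1$ and $x_1^{d-1}x_2$, at distance $(d-1)+|1-(d-1)|+1=2d-2$. Hence $\spar(K)=2d-2$, which exceeds $4$ exactly when $d\ge 4$, independently of $n$. Consequently \cref{thm0} (in fact already \cref{corpseudoperm}, since a quick check shows $\vars(\partial K/\partial x_i)=\{x_{i-1},x_i,x_{i+1}\}$, so the poset $(\beta^*,\le_K)$ is \emph{trivial} for $n\ge 2$) applies directly to all Klein hypersurfaces with $n\ge 2$, $d\ge 4$, $(n,d)\ne(2,4)$---including your ``borderline'' line $n=2$, $d\ge 5$, which needs no special treatment. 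Moreover, because the poset is trivial, one lands immediately on generalized \emph{permutation} matrices; there is no descent step from triangular to monomial to perform.

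The genuinely delicate case is the one your wrong sparsity hides: the cubics $d=3$, $n\ge 4$. Here $\spar(K)=4$, so \cref{thm0} does not apply, and this is precisely where \cref{theoautklein2} enters. The paper's argument there is not a refinement of the poset/triangular mechanism at all, but a direct computation: for a cubic, $D_{\partial K/\partial x}$ is a linear endomorphism of $V^*$ (the Hessian evaluated in the direction $x$), and one shows by an explicit $4\times 4$ minor analysis, splitting on the support pattern of $x=\sum c_ix_i$, that $\drank(\partial K/\partial x)=3$ forces $x$ to be a coordinate vector. By \cref{diff-method} this yields $\Aut(X)\subset\PGP(V,\beta)$, after which the monomial computation you outline (and which the paper carries out in \cref{theoautklein1}) finishes the job. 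Your proposal misses this Hessian-rank argument entirely, and without it the case $d=3$, $n\ge 4$ is not covered.
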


One last fruitful approach to study automorphism groups is to use Hodge theoretic methods. For example, the classical Torelli Theorem reduces the study of automorphism groups of curves to those of principally polarized abelian varieties (ppav, for short). An account of part of the history of this method is as follows: In one hand, if one wants to study the singular locus of the moduli of
genus $g$ ppav $\mathcal{A}_g$, then (similarly to the study of $\operatorname{Sing}(\mathcal{H}_d^n)$) there is a correspondence between its components and non-trivial finite groups $G$ (i.e., $G \neq \{\pm \operatorname{Id}\}$) acting on each family. As before, in the hierarchy of finite groups, one should first focus on cyclic groups and in particular in prime order automorphisms. Let $\sigma \in \operatorname{Aut}(A,\Theta)$ be an automorphism of prime order $p$ of a $g$-dimensional ppav. Then $p\leq 2g+1$ (see e.g. \cite[Prop. 13.2.5]{BL04}), and varieties for which equality holds were studied in \cite{GAMPZ05} under the name of \emph{extremal ppav}, since they correspond to certain $0$-dimensional components of $\operatorname{Sing}(\mathcal{A}_g)$. On the other hand, by the classical Torelli Theorem this kind of bound extends to smooth projective curves of genus $g$ and, more generally, to smooth projective varieties $X$ for which the middle cohomology is of Hodge level one and such that the Torelli Principle holds for the corresponding intermediate Jacobian $J(X)$. After the work of Deligne \cite{Del72} and Rapoport \cite{Rap72}, the only $n$-dimensional smooth hypersurfaces of the projective space with Hodge level one (and thus admitting a principally polarized intermediate Jacobian) are given by cubic threefolds, quartic threefolds, and cubic fivefolds. For a smooth cubic threefold $X\subseteq \mathbf{P}^4$, the corresponding Torelli Theorem is due to Clemens and Griffiths \cite{CG72}. The latter result together with techniques from \cite{GAMPZ05} on extremal ppav were used by Roulleau \cite{Rou09} to show that $\sigma \in \operatorname{Aut}(X)$ of prime order verifies $\operatorname{ord}(\sigma)\leq 11$ with equality if and only if $X$ is the Klein cubic threefold. As mentioned before, this characterization of the Klein hypersurfaces was extended to higher dimension and degree by the first two authors in \cite{gl13}. In the case of quartic threefolds and cubic fivefolds there is positive evidence for the Torelli Theorem only for \emph{generic} members in the corresponding moduli space by a celebrated result by Donagi \cite{donagi1983generic}. Also the Abel-Jacobi isomorphism between $J(X)$ and suitable Albanese varieties only holds for \emph{generic} $X$ by \cite{Col86,Let84}.

The starting point of this work was the fact that for quartic threefolds and cubic fivefolds, as in the case of cubic threefolds, the dimension $g=\dim J(X)$ of their intermediate Jacobians is such that $p:=2g+1$ is a prime number. In particular, if the quartic threefold (resp. cubic fivefold) admits an automorphism of prime order $p=61$ (resp. $p=43$) then the induced automorphism $\sigma \in \operatorname{Aut}(J(X),\Theta)$ will make $(J(X),\Theta)\in \mathcal{A}_g$ an extremal ppav in the sense of \cite{GAMPZ05} (cf. \cite[\S 8.4]{ST61}) and, as mentioned before, this situation only happens when  $X$ is isomorphic to the Klein hypersurface. In the particular case of cubic fivefolds, the corresponding prime $p=43$ is one of the few primes for which the class number of $\mathbf{Q}(\sqrt{-p})$ is 1, and thus the results from \cite{Adl81,bennama1997remarques} imply that there is a unique $21$-dimensional ppav with automorphism group given by $\operatorname{PSL}_2(\mathbf{F}_{43})$. It is natural to ask whether or not this abelian variety coincides with the intermediate Jacobian $J(X)$ of the Klein cubic fivefold, which would led to a counterexample of the Torelli Theorem for cubic fivefolds since $\operatorname{PSL}_2(\mathbf{F}_{43})$ has no faithful representations of degree $7$ (cf. \cite{GALM22}). Using the method of Bennama and Bertin \cite{bennama1997remarques} to describe automorphism groups of extremal ppav we prove that this is not the case, see \cref{example:cubic-5fold}.

More generally, the results in \cite{gl13} imply that the intermediate Jacobian $J(X)$ of the degree $d$ Klein hypersurface $X\subseteq \mathbf{P}^{n+1}$ is an extremal ppav for $(n,d)\in \{(3,3),(5,3),(3,4)\}$. However, as mentioned before, all the remaining Klein hypersurfaces of Wagstaff type do not admit a principally polarized intermediate Jacobian since their polarized Hodge structure is not of level one. This leads us to ask ourselves if there is some natural notion of extremal polarized Hodge structure verified by Klein hypersurfaces of Wagstaff type, useful for computing the group of automorphisms of polarized Hodge structures. In the second part of this article we develop the concept of extremal polarized Hodge structure, see \cref{defextrpolHS}, and study its properties. Extending the results of Benama and Bertin we are able to use this notion to compute the automorphism group of the polarized Hodge structure of some Klein hypersurfaces of Wagstaff type. Our main result can be summarized as follows.

\begin{introthm}[see \cref{thm:punctual-torelli}]
\label{thmA}
The middle primitive cohomology group of the Klein hypersurface of Wagstaff type $X$ of dimension $n\geq 3$ and degree $d\geq 3$ is an extremal polarized Hodge structure. Moreover, 
$$
\Aut(\operatorname{H}^n(X,\mathbf{Z})_{\prim})/\{\pm 1\}\simeq \Aut(X)
$$ 
in the following cases:
\begin{itemize}
    \item[(a)] $d\mid n+3$,
    \item[(b)] $d=3$ and $n\ge 5$.
\end{itemize}
\end{introthm}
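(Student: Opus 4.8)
The plan is to prove the two assertions in turn. For extremality I would argue as follows. For the Klein hypersurface of Wagstaff type the integer $n+2$ is prime, so $n$ is odd, and the automorphism $\sigma=\diag(\zeta_{dm},\zeta_{dm}^{1-d},\ldots,\zeta_{dm}^{(1-d)^{n+1}})$ has order $m=\frac{(d-1)^{n+2}+1}{d}=p$, which is prime by hypothesis. Since the action of $\Aut(X)$ on $\h^n(X,\ZZ)_{\prim}$ is faithful in our range (a standard fact, also recoverable from the Jacobian-ring model of the primitive cohomology), $\sigma^*$ is a nontrivial automorphism of $V:=\h^n(X,\mathbf{Q})_{\prim}$ of prime order $p$. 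On the other hand $\dim_{\mathbf{Q}}V=\frac{(d-1)^{n+2}-(d-1)}{d}=p-1$ (the primitive middle Betti number of a smooth degree $d$ hypersurface, for $n$ odd), and since every nontrivial irreducible $\mathbf{Q}$-representation of $\ZZ/p\ZZ$ has dimension $p-1$, it follows that $V\cong\mathbf{Q}(\zeta_p)$ as a $\mathbf{Q}[\sigma^*]$-module, with $\sigma^*$ acting as multiplication by a primitive $p$-th root of unity. Then the polarisation, being $\sigma^*$-invariant, has the form $Q(x,y)=\operatorname{Tr}_{\mathbf{Q}(\zeta_p)/\mathbf{Q}}(\alpha x\bar{y})$ for a purely imaginary $\alpha\in\mathbf{Q}(\zeta_p)$, and the Hodge decomposition refines the $\sigma^*$-eigenspace decomposition, hence is encoded by a CM type $\Phi$ that one reads off from the degrees in the Jacobian ring. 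This is exactly the structure of \cref{defextrpolHS}, proving the first assertion for all Wagstaff-type Klein hypersurfaces with $n\ge3$, $d\ge3$.

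For the second assertion, note that every automorphism of $X$ lies in $\PGL_{n+2}(\CC)$, fixes the hyperplane class, and preserves the primitive part, so there is a natural homomorphism $\rho\colon\Aut(X)\to G:=\Aut(\h^n(X,\ZZ)_{\prim})$; it is injective by the faithfulness invoked above. The theorem amounts to showing that the induced map $\Aut(X)\to G/\{\pm\Id\}$ is an isomorphism, i.e.\ that $G=\rho(\Aut(X))\cdot\{\pm\Id\}$ and $-\Id\notin\rho(\Aut(X))$. The second condition is clear: when $\Aut(X)=\mathcal{K}(n,d)$ the group $\rho(\Aut(X))$ has odd order $p(n+2)$, and in the only remaining case $(n,d)=(3,3)$ — which is covered by (a) — one has $\rho(\Aut(X))\simeq\operatorname{PSL}_2(\mathbf{F}_{11})$, a group with trivial centre, whereas $-\Id$ is central in $G$.

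The heart of the argument is the computation of $G$. Writing $L=\h^n(X,\ZZ)_{\prim}$, I would first determine the normalizer $N:=N_G(\langle\sigma^*\rangle)$. Under $V\cong\mathbf{Q}(\zeta_p)$ every element of $N$ is $\mathbf{Q}(\zeta_p)$-semilinear over some $\gamma\in\Gal(\mathbf{Q}(\zeta_p)/\mathbf{Q})$, i.e.\ of the form $v\mapsto\lambda\gamma(v)$ with $\lambda\in\mathbf{Q}(\zeta_p)^\times$. Preservation of $L$ forces $\lambda$ to be a unit of the multiplier ring $\{x:xL\subseteq L\}$, which is the maximal order $\ZZ[\zeta_p]$ (already $\sigma^*$ makes $L$ a $\ZZ[\zeta_p]$-module); preservation of $Q$ forces $\lambda\bar\lambda=1$; and since the cyclotomic unit index is $1$ (Kummer), this gives $\lambda\in\mu_{2p}=\langle-\Id,\sigma^*\rangle$. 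Preservation of the Hodge filtration forces $\gamma$ to stabilise $\Phi$, and a direct inspection of $\Phi$ (together with its symmetry realised by $\nu^*$) shows that the stabiliser of $\Phi$ in $\Gal(\mathbf{Q}(\zeta_p)/\mathbf{Q})$ is precisely the cyclic subgroup of order $n+2$ generated by $\nu^*$ — it is exactly here that hypotheses (a) $d\mid n+3$ and (b) $d=3$, $n\ge5$ are used, as the conditions guaranteeing that this stabiliser is not larger. Thus $N=\langle\sigma^*,\nu^*,-\Id\rangle=\rho(\mathcal{K}(n,d))\times\{\pm\Id\}$. Since $G$ is generated by the $G$-conjugates of $\sigma^*$, it remains to count the conjugacy classes of cyclic subgroups of order $p$ in $G$ — equivalently, the $Q$- and $L$-compatible embeddings $\ZZ[\zeta_p]\hookrightarrow\operatorname{End}(L)$ respecting the Hodge structure — using the ideal-class/genus-theoretic methods of \cite{bennama1997remarques}. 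For the Klein cubic threefold these conjugates assemble, through the Heisenberg/theta structure of its level-one intermediate Jacobian, into $\operatorname{PSL}_2(\mathbf{F}_{11})$ (consistently with \cite{CG72}), whereas for every other Klein hypersurface in the range there is a single such conjugacy class, whence $G=N$. In all cases $G/\{\pm\Id\}\simeq\rho(\Aut(X))\simeq\Aut(X)$, with the isomorphism induced by $\rho$.

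I expect the main obstacle to lie in these last two steps: making the constraint ``$\gamma$ stabilises $\Phi$'' effective enough under hypotheses (a)/(b) to pin the Galois part of $N$ down to $\ZZ/(n+2)\ZZ$, and counting the order-$p$ cyclic subgroups of $G$ so as to decide between $G=N$ and the strictly larger $\operatorname{PSL}_2$-type group — precisely the places where the arithmetic of $\mathbf{Q}(\zeta_p)$ and $\mathbf{Q}(\sqrt{-p})$ intervenes, and where the Bennama--Bertin computation must be extended beyond the level-one, abelian-variety setting.
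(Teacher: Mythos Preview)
Your overall architecture matches the paper's: establish extremality from the dimension count $\rank H=p-1$, then analyse $G=\Aut(H)/\{\pm1\}$ through the normaliser of $\langle\sigma^*\rangle$, where the key constraint is that the Galois twist $\gamma$ must preserve the partition of primitive $p$-th roots of unity by Hodge type (the paper calls this stabiliser $\Delta$). Two substantive points of comparison are worth noting.

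First, the paper replaces your proposed conjugacy-class count by Brauer's theorem: any finite group of order $p\cdot g'$ with $\gcd(p,g')=1$, more than one $p$-Sylow, and a faithful representation of degree $(p-1)/2$ has central quotient $\operatorname{PSL}_2(\mathbf{F}_p)$. Applied with the representation on $F^{(n+1)/2}H_\C$, this yields directly the dichotomy $G\simeq\langle\sigma\rangle\rtimes\mu$ for some $\mu\le\Delta$, or $G\simeq\operatorname{PSL}_2(\mathbf{F}_p)$ with $|\Delta|=(p-1)/2$. Once you have shown $|\Delta|=n+2$, the second alternative is excluded by size alone for $(n,d)\neq(3,3)$, and the sandwich $\Aut(X)\hookrightarrow G\le\langle\sigma\rangle\rtimes\Delta$ forces equality. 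No arithmetic of $\mathbf{Q}(\sqrt{-p})$ or ideal-class counting is needed; your route would work in principle but is harder to carry out beyond the level-one case.

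Second, your phrase ``a direct inspection of $\Phi$ shows the stabiliser is $\ZZ/(n+2)\ZZ$'' hides the main technical content. The paper makes this explicit: with $S_q\subset(\ZZ/p\ZZ)^\times$ the set of $\sigma^*$-eigenvalue exponents on $H^{n-q,q}$ (read off from Griffiths' residue basis), one must show $\{m:m\cdot S_q=S_q\ \forall q\}=\langle 1-d\rangle$. Case~(a) is then immediate, since $d\mid n+3$ produces a $q$ with $S_q=\langle 1-d\rangle$ itself. Case~(b) with $3\nmid n+3$ requires a genuine combinatorial argument on the set $\{(-2)^i+(-2)^j:0\le i<j\le n+1\}$, occupying the paper's entire Appendix; this is where hypotheses~(a)/(b) actually bite, and it is not a matter of inspection.

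A minor technical slip: for $\varphi\in N_G(\langle\sigma^*\rangle)$ written as $v\mapsto\lambda\gamma(v)$, preservation of $Q$ does not give $\lambda\bar\lambda=1$ unless $\gamma=\mathrm{id}$. The paper argues instead that $\varphi$ has finite order (since $\Aut(H)$ is finite), so in the centraliser the scalar $u$ is a root of unity in $\Q(\zeta_p)$, hence $u=\pm\sigma^i$; this bypasses any appeal to cyclotomic unit indices.
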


The above result gives evidence supporting the so called Strong Torelli Principle for hypersurfaces. Recall that the classical Torelli Theorem \cite{And58, arbarello2013geometry} states that for any pair of  curves $X$ and $X'$ of genus $g$, every isomorphism between their polarized Jacobian varieties $J(X)\simeq J(X')$ is induced, possibly up to an involution, by a unique isomorphism of curves $X\simeq X'$. Since the Jacobian of a curve $X$ is determined by its Hodge structure $J(X)=(\h^{1,0}(X))^*/\h_1(X,\ZZ)$, an isomorphism between polarized Jacobian varieties $J(X)\simeq J(X')$ is represented by an isomorphism between the corresponding polarized Hodge structures. 

The Torelli Theorem motivated the following conjecture commonly called the Strong Torelli Principle: Let $X$ and $X'$ be two smooth hypersurfaces of degree $d$ and dimension $n$ on $\PP^{n+1}$. Then every isomorphism of polarized Hodge structures $ \varphi\colon \h^n(X,\ZZ)\to \h^n(X',\ZZ)$ preserving polarizations is induced, possibly up to an involution, by a unique isomorphism $\psi\colon X\to X'$. The weaker statement where  we only ask for the existence of the isomorphisms $\varphi$ and $\psi$ without requiring the former being induced by the latter is called the Global Torelli Principle. 

The Strong Torelli Principle holds for plane curves by the classical Torelli Theorem, for quartic surfaces \cite{pyatetskii1971torelli,burns1975torelli,friedman1984new}, for cubic threefolds \cite{CG72,beauville1982singularites} and  for cubic fourfolds \cite{voisin1986theoreme,looijenga2009period,Cha12,HR19}. Furthermore, the verification of the Rational Global Torelli Principle for generic hypersurfaces of dimension $n$ and degree $d$ in $\PP^{n+1}$ has been recently completed by Voisin \cite{voisin2022schiffer} for all values of $(n,d)$ with the exception of a finite number of cases (complementing previous works of Donagi \cite{donagi1983generic} and Cox--Green \cite{cox1990polynomial}). It is worth noting that in general one can only expect the Rational Torelli Principle to hold for generic hypersurfaces (see \cite[{Remark~0.3}]{voisin2022schiffer}). For instance, in the case of Hodge structures of level one there are non-isomorphic hypersurfaces with isogenous intermediate Jacobians (e.g. for curves). And for Hodge structures of higher level, whenever the rational Hodge structure splits (for instance in the Noether-Lefschetz locus) we get several non-trivial involutions which in general do not come from actual involutions of the hypersurface (e.g. for K3 surfaces).

If we take $X=X'$ in the Strong Torelli Principle, we obtain what we call the Punctual Torelli Principle: Let $X$ be a smooth hypersurfaces of degree $d$ and dimension $n$ on $\PP^{n+1}$. Then every automorphism of polarized Hodge structures $\h^n(X,\ZZ)$ preserving polarizations is induced, possibly up to an involution,  by a unique automorphism of $X$. In view of the generic Global Torelli Principle and the fact that a generic hypersurface has trivial automorphism group \cite{MM64}, it is natural to ask whether the Punctual Torelli Principle holds for smooth hypersurfaces with large automorphism group. Recalling that Klein hypersurfaces of Wagstaff type correspond precisely to those hypersurfaces admitting an automorphism of prime order $p>(d-1)^n$, \cref{thmA} provides a partial positive answer to this question. In other words we can restate \cref{thmA} by saying that the Punctual Torelli Principle holds for Klein hypersurfaces of Wagstaff type with $d\mid n+3$, or $d=3$ and $n\geq 5$. Beside the cases we were able to prove, we expect \cref{thmA} to hold in all the remaining cases. 

The article is organized as follows: In \S\ref{sec1} we recall the differential method, introduced by Oguiso and Yu. In \S\ref{sec2} we prove \cref{thm0}, which is a refinement of the differential method that provides effective conditions under which the automorphisms of an hypersurface are given by generalized triangular matrices. As an application, we compute in \S\ref{sec3} the automorphism groups of most Fermat, Delsarte and Klein hypersurfaces, proving in this way \cref{thmB}. In \S\ref{sec4} we introduce the notion of extremal polarized Hodge structure and we study their automorphism group in \cref{theoautextHS}. Finally, in \S\ref{sec5} we prove that the Hodge structure of Klein hypersurfaces of Wagstaff type is extremal and we compute their automorphism groups in the cases described in \cref{thmA}. We remark that part of the proof of \cref{thmA} relies on an elementary number theory question (see \cref{conj}), that we check for cubic hypersurfaces in the \hyperref[appendix]{Appendix}.

\subsection*{Acknowledgements} This collaboration started during the conference AGREGA held at Pontificia Universidad Católica de Chile in January 2022. We would like to thank the institution for the support and hospitality. The authors would like to warmly thank Michela Artebani for showing us \cite{Shi86,SI77} and to Fabián Levicán for writing the Python script used to produce \cref{table}. Finally, we would like to express our deep gratitude to the anonymous referee for the thorough review of the article and for bringing to our attention the need to consider integral Hodge structures in $\S \ref{sec4}$.

\section{The differential method}
\label{sec1}

In this section we introduce the differential method (see \cref{diff-method}) first used in \cite{Kon02,Poo05} and \cite{OY19}. In \S\ref{sec2} we will apply this method to determine the automorphism group of most hypersurfaces of Fermat, Klein and Delsarte type.

Fix a vector space $V$ over $\CC$ of dimension $n+2$, with $n\geq 1$. Let $\GL(V)$, and $\PGL(V)$ be the general linear group and the projective linear group, respectively. We denote by $\pi\colon \GL(V)\rightarrow \PGL(V)$ the canonical projection. For an automorphism $\tvarphi\colon V\rightarrow V$ in $\GL(V)$ we denote its image $\pi(\tvarphi)$ by $\varphi$. On the other hand, given an automorphism $\varphi\in \PGL(V)$ we choose a preimage by $\pi$ and we denote it by $\tvarphi\in \GL(V)$. The automorphism $\tvarphi$ induces an automorphism $\tvarphi^*\colon V^*\rightarrow V^*$ on the dual space $V^*$ given by $\tvarphi^*(L)=L\circ \tvarphi$. Consequently, it also induces a graded automorphism $\tvarphi^*\colon S(V^*)\rightarrow S(V^*)$ on the symmetric algebra $S(V^*)$ of the vector space $V^*$ given also by $\tvarphi^*(F)= F\circ\tvarphi$. This automorphism restricts to an automorphism $\tvarphi^*\colon S^d(V^*)\rightarrow S^d(V^*)$ of forms of degree $d$.

An automorphism of an algebraic variety $X$ is a regular map $X\rightarrow X$ having a regular inverse map. The group of all automorphisms of $X$ is denoted by $\Aut(X)$. Let $\PP(V)$ be the complex projective space of dimension $n+1$ of the vector space $V$. The automorphism group of $\PP(V)$ is the projective linear group $\PGL(V)$. Let now $X$ be a hypersurface of $\PP(V)$ given as the zero set of a homogeneous form $F\in S^d(V^*)$ of degree $d$, the group of linear automorphisms, denoted by $\Lin(X)$, is the subgroup of $\Aut(X)$ of automorphisms that extend to an automorphism of the ambient space $\PP(V)$, i.e., 
\begin{align*}
    \Lin(X)&=\{\varphi\in \PGL(V)\mid \varphi(X)=X\}\\
    &=\{\varphi\in
\PGL(V)\mid \tvarphi(F)=\lambda F\mbox{ for some } \lambda\in \CC^*\}\,.
\end{align*}
In general, the group $\Lin(X)$ is a proper subgroup of $\Aut(X)$, nevertheless for smooth hypersurfaces, we have the following classical theorem \cite{MM64}, see also \cite[\S 6]{Ko19} and the references therein.

\begin{theorem} \label{matsumura-monsk} %
  Let $X$ and $Y$ be smooth hypersurfaces of dimension $n\geq 1$ and degree $d\geq 3$ in the complex projective space $\PP(V)$ and let $\tau\colon X\rightarrow Y$ be an isomorphism. If $(n,d)\neq  (1,3), (2,4)$, then $\tau$ is the restriction of a linear automorphism  $\PP(V)\rightarrow \PP(V)$ in $\PGL(V)$. In particular, every automorphism of $X$ is linear.  Moreover, $\Aut(X)$ is a finite group.
\end{theorem}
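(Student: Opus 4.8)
The plan is to prove the sharper statement that \emph{any} isomorphism $\tau\colon X\to Y$ of smooth hypersurfaces of dimension $n$ and degree $d$ in $\PP(V)$, with $(n,d)\neq(1,3),(2,4)$, satisfies $\tau^{*}\mathcal{O}_{Y}(1)\cong\mathcal{O}_{X}(1)$; granting this, the rest is formal. Since $d\geq 2$, the exact sequence $0\to\mathcal{O}_{\PP(V)}(1-d)\to\mathcal{O}_{\PP(V)}(1)\to\mathcal{O}_{X}(1)\to 0$ together with the vanishing $\h^{0}=\h^{1}=0$ of its first term shows that restriction is an isomorphism $V^{*}=\h^{0}(\PP(V),\mathcal{O}(1))\xrightarrow{\sim}\h^{0}(X,\mathcal{O}_{X}(1))$, and that $X\hookrightarrow\PP(V)$ is precisely the embedding attached to the complete linear system $|\mathcal{O}_{X}(1)|$; the same holds for $Y$. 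Hence an isomorphism $\tau^{*}\mathcal{O}_{Y}(1)\cong\mathcal{O}_{X}(1)$ induces a linear isomorphism between $\h^{0}(Y,\mathcal{O}_{Y}(1))$ and $\h^{0}(X,\mathcal{O}_{X}(1))$, i.e.\ an element of $\PGL(V)$ whose restriction to $X$ is $\tau$. Taking $X=Y$ gives $\Aut(X)=\Lin(X)$, which is a Zariski-closed subgroup of $\PGL(V)$ (the stabilizer of $[F]$), hence an affine algebraic group; its finiteness is equivalent to $\dim\Lin(X)=0$, and since $\operatorname{Lie}\Lin(X)$ injects into $\h^{0}(X,T_{X})$ through the faithful action on $X$, this follows from the vanishing $\h^{0}(X,T_{X})=0$ --- a standard cohomological computation valid throughout the stated range, and precisely the ingredient that fails for $(n,d)=(1,3)$, where $\h^{0}(T_{X})=\h^{0}(\mathcal{O}_{X})\neq 0$.

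The heart of the matter is therefore the isomorphism $\tau^{*}\mathcal{O}_{Y}(1)\cong\mathcal{O}_{X}(1)$. For this I would use that the canonical bundle is intrinsic, so $\tau^{*}\omega_{Y}\cong\omega_{X}$, together with adjunction, $\omega_{X}\cong\mathcal{O}_{X}(d-n-2)$, and argue in three cases.
\begin{itemize}
\item[(i)] If $n\geq 3$, then $\operatorname{Pic}(X)=\ZZ\cdot\mathcal{O}_{X}(1)$ and $\operatorname{Pic}(Y)=\ZZ\cdot\mathcal{O}_{Y}(1)$ by the Grothendieck--Lefschetz theorem, so $\tau^{*}\mathcal{O}_{Y}(1)=\mathcal{O}_{X}(e)$ for some $e\in\ZZ$; since pullback by an isomorphism preserves ampleness one gets $e\geq 1$, and running the same argument for $\tau^{-1}$ forces $e=1$.
\item[(ii)] If $n=2$ and $d\neq 4$, then $X$ is simply connected (by the Lefschetz hyperplane theorem, as $n\geq 2$) and $\h^{1}(\mathcal{O}_{X})=0$, so $\operatorname{Pic}(X)$ is finitely generated and torsion-free; as $d-4\neq 0$, the bundle $\mathcal{O}_{X}(1)$ is the \emph{unique} solution of $L^{\otimes(d-4)}\cong\omega_{X}$ in $\operatorname{Pic}(X)$, and $\tau^{*}\mathcal{O}_{Y}(1)$ is also a solution since $(\tau^{*}\mathcal{O}_{Y}(1))^{\otimes(d-4)}\cong\tau^{*}\omega_{Y}\cong\omega_{X}$.
\item[(iii)] If $n=1$, so $d\geq 4$: when $d=4$ one has $\mathcal{O}_{X}(1)\cong\omega_{X}$ directly; when $d\geq 5$ I would invoke the classical fact that a smooth plane curve of degree $\geq 5$ carries a unique $g^{2}_{d}$, namely $|\mathcal{O}_{X}(1)|$. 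Since $\tau^{*}\mathcal{O}_{X}(1)$ has degree $d$, is very ample and has $\h^{0}=3$, its complete linear system is such a $g^{2}_{d}$, whence $\tau^{*}\mathcal{O}_{X}(1)\cong\mathcal{O}_{X}(1)$.
\end{itemize}
The excluded pairs $(2,4)$ and $(1,3)$ are exactly those that fall through every case: in both $\omega_{X}$ is trivial, while $\operatorname{Pic}(X)$ is either of possibly large rank (the K3 case) or has positive-dimensional $\operatorname{Pic}^{0}$ (the elliptic curve, on which every degree-$3$ line bundle is a $g^{2}_{3}$), so the polarization $\mathcal{O}_{X}(1)$ cannot be recovered from the abstract variety --- in keeping with the failure of the theorem there.

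The main obstacle is case (iii) with $d\geq 5$, where $\operatorname{Pic}^{0}(X)$ is a positive-dimensional abelian variety and no root-extraction argument isolates $\mathcal{O}_{X}(1)$, so one genuinely needs the uniqueness of the planar $g^{2}_{d}$; I would either cite this as classical or prove it by noting that a second base-point-free $g^{2}_{d}=|D|$ would give a second degree-$d$ plane model of $X$ and that comparing $|D|$ with $|\mathcal{O}_{X}(1)|$ --- using that $X$ is non-hyperelliptic for $d\geq 4$ together with a Riemann--Roch/Clifford estimate for $\h^{0}(\mathcal{O}_{X}(D)\otimes\mathcal{O}_{X}(1))$ --- forces $D\sim\mathcal{O}_{X}(1)$. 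The remaining delicate (though routine) input is the vanishing $\h^{0}(X,T_{X})=0$: from $0\to T_{X}\to T_{\PP(V)}|_{X}\to\mathcal{O}_{X}(d)\to 0$ it reduces to checking that no nonzero linear vector field on $\PP(V)$ is tangent to $X$, which follows from the Jacobian criterion since the partial derivatives of $F$ have no common zero on $X$.
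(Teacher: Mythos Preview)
The paper does not prove this theorem: it is stated as the classical result of Matsumura--Monsky \cite{MM64}, with a pointer to \cite[\S 6]{Ko19}, and is used thereafter as a black box. So there is no in-paper argument to compare against; your proposal is essentially the standard modern proof, and it is correct in outline.

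Two points deserve sharpening. First, your reduction of $\h^0(X,T_X)=0$ to ``no nonzero linear vector field on $\PP(V)$ is tangent to $X$'' tacitly uses that the restriction map $\h^0(\PP(V),T_{\PP(V)})\to \h^0(X,T_{\PP(V)}|_X)$ is onto. This holds for $n\ge 2$ (where $\h^1(T_{\PP(V)}(-d))=0$), but fails for $n=1$; indeed for a plane cubic the extra section of $T_{\PP^2}|_X$ is exactly what produces the translation vector field. For $n=1$, $d\ge 4$ you should instead just note $\deg T_X=2-2g<0$. Granting the reduction for $n\ge 2$, your claim that smoothness suffices is correct but hides the real step: since the $\partial_iF$ have no common projective zero they form a regular sequence, so their first syzygies are Koszul and live in degree $d-1\ge 2$; hence any relation $\sum_i L_i\,\partial_iF=0$ with linear $L_i$ is trivial, and after subtracting the Euler relation this forces every matrix $A$ with $A\cdot\nabla F\in(F)$ to be scalar. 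That is what ``follows from the Jacobian criterion'' should unpack to.

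Second, in case~(iii) with $d\ge 5$ you correctly isolate the uniqueness of the $g^2_d$ on a smooth plane curve as the only nontrivial input, but your sketch of its proof is vague. The result is classical (essentially Noether); a clean route is to observe that a second very ample $g^2_d$ would give another smooth degree-$d$ plane model of the same genus, and then analyse $|K_X-D|$ via Riemann--Roch. Just be careful not to argue circularly by invoking linearity of isomorphisms between plane curves at this step, since that is precisely the $n=1$ instance of the theorem you are proving.
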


The so called differential method has been used to constrain, in some particular cases via ad-hoc constructions, the shape of the matrices in $\PGL(V^*)$ defining an automorphism of a hypersurface $X$ in $\PP(V)$ given by a form having, in a certain bases, few monomials, but has never received an abstract general treatment. We intend to do that in this and the following section.

Let us fix some notations in order to do computations in
coordinates. Let $\beta=\{e_0,\ldots,e_{n+1}\}$ be a basis of
$V$ and let $\beta^*=\{x_0,\ldots,x_{n+1}\}$ be the corresponding dual basis of $V^*$. This choice also endows $V^*$ with a dot product for which $\beta^*$ is an orthonormal basis, i.e., $x_i\cdot x_j=1$ if $i=j$ and $x_i\cdot x_j=0$ if $i\neq j$. Furthermore, the choice of a basis also induces a canonical isomorphism of the symmetric algebra $S(V^*)$ and the polynomial ring
$\CC[x_0,\ldots,x_{n+1}]$. Under this isomorphism, a form
$F \in S(V^*)$ of degree $d$ corresponds to a homogeneous polynomial of total degree $d$. Moreover, after this choice of basis, an element $\tvarphi$ in $\GL(V)$ is given by a matrix $(\tvarphi_{ij})$.

\begin{definition}
\label{diff-rank}
Let $D$ be the directional derivative operator 
$$D\colon V^*\times S(V^*)\to S(V^*),\qquad  (x,F)\mapsto
\frac{\partial F}{\partial x}=\nabla(F)\cdot x\,.$$
For a fixed homogeneous form $F\in S(V^*)$  define the
\emph{specialization of $D$} given by 
$$D_F\colon V^*\to S(V^*),\qquad  x\mapsto
\frac{\partial F}{\partial x}=\nabla(F)\cdot x\,.$$ 
Recall that the rank of a linear operator is the dimension of its image. We define the \emph{differential rank} of a form $F\in S(V^*)$ as $\drank(F):=\rank D_F$.
\end{definition}

The key idea of the differential method is that the differential rank does not change if we replace $F$ by
$\tvarphi^*(F)$, where $\tvarphi\in \GL(V)$.

\begin{proposition}[{\cite[Theorem~3.5]{OY19}}] \label{diff-method-oguiso}
  Let $F,G\in S(V^*)$ and let $\tvarphi\in \GL(V)$ be such that $\tvarphi^*(G)=\lambda F$ for some $\lambda \in \CC^*$. Then $\drank(F)=\drank(G)$.
 \end{proposition}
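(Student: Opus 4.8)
The plan is to show that $\tvarphi^*\colon S(V^*)\to S(V^*)$ intertwines the two directional derivative operators $D_G$ and $D_F$ up to the scalar $\lambda$ and up to the linear change of variables $\tvarphi$ on the source $V^*$. Concretely, I would first establish the chain-rule identity
\begin{equation*}
\tvarphi^*\!\left(\frac{\partial G}{\partial x}\right) \;=\; \frac{\partial (\tvarphi^*G)}{\partial \big((\tvarphi^*)^{-1}x\big)} \qquad \text{for all } x\in V^*,
\end{equation*}
or, what is the same, $D_{\tvarphi^*G}\bigl(\tvarphi^*(x)\bigr) = \tvarphi^*\bigl(D_G(x)\bigr)$ for every $x\in V^*$. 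This is just the statement that differentiating $G\circ\tvarphi$ in a direction corresponds, under the substitution, to differentiating $G$ in the $\tvarphi$-image of that direction; in coordinates it is the usual Jacobian chain rule, and since $\tvarphi$ is \emph{linear} the Jacobian is constant and equal to the matrix of $\tvarphi$, so no subtlety arises.

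Granting this, I would argue as follows. Since $\tvarphi^*\colon V^*\to V^*$ is an isomorphism, we have $D_{\tvarphi^*G}(V^*) = D_{\tvarphi^*G}\bigl(\tvarphi^*(V^*)\bigr) = \tvarphi^*\bigl(D_G(V^*)\bigr)$ by the intertwining identity. As $\tvarphi^*$ is also an isomorphism of $S(V^*)$, it carries the subspace $D_G(V^*)\subseteq S(V^*)$ isomorphically onto its image, hence
\begin{equation*}
\rank D_{\tvarphi^*G} = \dim \tvarphi^*\bigl(D_G(V^*)\bigr) = \dim D_G(V^*) = \rank D_G = \drank(G).
\end{equation*}
Now use the hypothesis $\tvarphi^*(G) = \lambda F$ with $\lambda\in\CC^*$: the operator $D_{\lambda F}$ equals $\lambda\cdot D_F$ (the directional derivative is linear in the form), and multiplication by the nonzero scalar $\lambda$ is an automorphism of $S(V^*)$, so $\rank D_{\lambda F} = \rank D_F = \drank(F)$. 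Combining, $\drank(F) = \rank D_{\lambda F} = \rank D_{\tvarphi^*G} = \drank(G)$, which is the desired equality.

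The argument is essentially formal once the chain-rule intertwining is in place, so there is no real obstacle; the only point requiring a little care is bookkeeping the direction in which $\tvarphi^*$ acts on the \emph{argument} $x\in V^*$ versus the \emph{value} in $S(V^*)$, i.e.\ making sure one applies $\tvarphi^*$ to the direction before (or after) differentiating in the correct order. One clean way to sidestep index juggling entirely is to note that $D_G(V^*)$ is precisely the linear span of the first partial derivatives $\partial G/\partial x_0,\dots,\partial G/\partial x_{n+1}$ in any basis, i.e.\ the degree-$(d-1)$ part of the Jacobian ideal's linear-in-generators piece; this span is manifestly invariant (up to the isomorphism $\tvarphi^*$) under linear substitutions, giving the rank equality immediately. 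I would present the proof in the first, more self-contained form, recording the chain-rule identity as a one-line lemma if needed.
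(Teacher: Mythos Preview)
Your approach is essentially the same as the paper's: both establish the chain-rule identity and then observe that pre- and post-composing $D_G$ by isomorphisms preserves rank. The paper packages this as a commutative square with vertical arrows $\tvarphi^*$ and $\lambda(\tvarphi^*)^{-1}$; you phrase it as an intertwining relation and pass through $D_{\tvarphi^*G}=D_{\lambda F}=\lambda D_F$.

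One small slip to fix: your displayed identity $\tvarphi^*\!\big(\partial G/\partial x\big)=\partial(\tvarphi^*G)/\partial\big((\tvarphi^*)^{-1}x\big)$ is correct and is exactly the paper's equation \eqref{differential-method-equation} after the substitution $x\mapsto\tvarphi^*x$, but your restatement ``or, what is the same, $D_{\tvarphi^*G}(\tvarphi^*(x)) = \tvarphi^*(D_G(x))$'' is \emph{not} the same identity---it has $\tvarphi^*$ on the argument where it should have $(\tvarphi^*)^{-1}$ (try $G=x_1^2$ and $\tvarphi=\diag(2,1)$). This is harmless for your rank computation, since the image $D_{\tvarphi^*G}(V^*)=\tvarphi^*\big(D_G(V^*)\big)$ comes out the same either way once you use $\tvarphi^*(V^*)=V^*$; indeed you anticipated exactly this bookkeeping hazard in your closing remark.
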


\begin{proof}
 The following diagram 
 \begin{align*}
\xymatrix{  V^* \ar[rr]^{D_F} \ar[d]_{\tvarphi^*} & & S(V^*) \ar[d]^{\lambda(\tvarphi^*)^{-1}}  \\  V^* \ar[rr]^-{D_G} & & S(V^*) } 
\end{align*}
 is commutative. Indeed, a straightforward application of the chain rule shows that 
  \begin{align}
  \begin{split} \label{differential-method-equation}
    \lambda\frac{\partial F}{\partial x}&= \nabla(\lambda F)\cdot x =
    \nabla(G\circ \tvarphi^*)\cdot x =
    ((\nabla G)\circ
    \tvarphi^*)\cdot \tvarphi^*(x)\\
    &=(\nabla(G)\cdot \tvarphi^*(x))\circ
    \tvarphi^*=\frac{\partial G}{\partial(\tvarphi^* x)}\circ\tvarphi^*=\tvarphi^*\left(\frac{\partial G}{\partial(\tvarphi^* x)}\right)\,.
  \end{split}\
  \end{align}
  Hence, the ranks of $D_F$ and $D_G$ agree since both vertical arrows are isomorphisms.
\end{proof}

As a corollary we obtain the following lemma that we will use to constrain the shape of the automorphism groups of certain hypersurfaces including Klein hypersurfaces.

\begin{corollary}
   \label{diff-method}
  Let $X$ be a hypersurface in $\PP(V)$ given by a homogeneous form
  $F\in S(V^*)$ and let $\varphi\in PGL(V)$ be a linear automorphism
  of $X$. Then for every $x\in V^*$ we have
  $$\drank\left(\frac{\partial F}{\partial
      x}\right)=
  \drank\left(\frac{\partial F}{\partial(\tvarphi^* x)}\right).$$
\end{corollary}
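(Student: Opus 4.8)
The plan is to deduce this directly from \cref{diff-method-oguiso}, applied twice: once implicitly, through the chain-rule identity established in its proof, and once to the first-order partial derivatives themselves. The key observation is that if $\varphi$ is a linear automorphism of $X=V(F)$, then $\tvarphi^*(F)=\lambda F$ for some $\lambda\in\CC^*$ by definition of $\Lin(X)$, and the computation \eqref{differential-method-equation} carried out in the proof of \cref{diff-method-oguiso}, specialized to the case $G=F$, reads
\[
\lambda\,\frac{\partial F}{\partial x}\;=\;\tvarphi^*\!\left(\frac{\partial F}{\partial(\tvarphi^* x)}\right)
\]
for every $x\in V^*$. Thus the two degree $d-1$ forms appearing in the statement are carried into one another by $\tvarphi^*$ up to the nonzero scalar $\lambda$.

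First I would record that $\tvarphi^*(F)=\lambda F$ with $\lambda\in\CC^*$, which is immediate from $\varphi\in\Lin(X)$. Next I would invoke the chain-rule identity \eqref{differential-method-equation} with $G=F$ to obtain the displayed relation above; no new computation is needed, since \eqref{differential-method-equation} is proved for arbitrary $F,G$ with $\tvarphi^*(G)=\lambda F$. Finally, I would apply \cref{diff-method-oguiso} to the pair of homogeneous forms $\partial F/\partial x$ and $\partial F/\partial(\tvarphi^* x)$ (both of degree $d-1$), with the same $\tvarphi$ and the same scalar $\lambda$: since $\tvarphi^*\big(\partial F/\partial(\tvarphi^* x)\big)=\lambda\,\partial F/\partial x$, the proposition yields $\drank(\partial F/\partial x)=\drank(\partial F/\partial(\tvarphi^* x))$, which is exactly the assertion.

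I do not expect a genuine obstacle here; the remaining points are purely formal. One should note that $\tvarphi^*\colon V^*\to V^*$ is an isomorphism, so $x\mapsto\tvarphi^* x$ is a bijection and $x=0$ (where both sides equal $\drank(0)=0$) is the only degenerate case, and that the scalar $\lambda$ furnished by $\varphi\in\Lin(X)$ is automatically nonzero, hence harmless for the rank computation. The slightly non-obvious point is conceptual rather than technical: recognizing that \cref{diff-method-oguiso} must be applied not to $F$ itself but to its first partials, after extracting the intertwining relation already hidden in its proof.
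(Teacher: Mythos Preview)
Your proof is correct and matches the paper's argument essentially verbatim: record $\tvarphi^*(F)=\lambda F$, specialize \eqref{differential-method-equation} to $G=F$ to obtain $\lambda\,\partial F/\partial x=\tvarphi^*\!\big(\partial F/\partial(\tvarphi^* x)\big)$, and then invoke \cref{diff-method-oguiso} on these two degree $d-1$ forms.
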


\begin{proof}
  Since $\varphi$ is an automorphism of $X$, we have that
  $\tvarphi^*(F)=F\circ \tvarphi =\lambda F$ for some
  $\lambda \in \CC^*$. Now the statement follows from \cref{diff-method-oguiso} since \eqref{differential-method-equation} with $G=F$ yields 
  \begin{align*}
    \lambda\frac{\partial F}{\partial x}=\tvarphi^*\left(\frac{\partial F}{\partial(\tvarphi^* x)}\right).
  \end{align*}
\end{proof}

\begin{remark}
For a generic homogeneous form $F\in S(V^*)$ we have $\ker D_F=\{0\}$ and so the differential rank of $F$ is
$\dim(V)=n+2$. In particular, if $X$ is smooth of degree greater than one then the corresponding form $F$ has differential rank $\dim(V)$. Nevertheless, we will apply \cref{diff-method} in the cases where the differential rank of $\frac{\partial F}{\partial x}$ for a particular choice of $x\in V^*$ drops, this puts constrains on the shape of the matrices that appear as automorphisms of $X$ since the differential rank of $\frac{\partial F}{\partial (\tvarphi^* x)}$ must also drop.    
\end{remark}

\section{Generalized permutation automorphisms}
\label{sec2}

In this section we apply the differential method to restrict the automorphism group of certain hypersurfaces which we call ``simple'' (see \cref{simple-hyp} for details) and which are inspired by the main result of \cite{Zhe22}. In many cases, these restrictions are enough to determine the full automorphism group.

\begin{definition} \
\begin{enumerate}
\item[$(i)$]   A matrix  is a \emph{generalized permutation matrix} if there is at most one coefficient different from 0 in each row and each column. We denote by $\GP(V,\beta)$  the subgroup of all $\tvarphi\in \GL(V)$ such that $\tvarphi$ corresponds to a generalized permutation matrix with respect to the basis $\beta$. We also denote by $\PGP(V,\beta)$ the image of $\GP(V,\beta)$ in $\PGL(V)$. Note that every generalized permutation matrix is the product of a diagonal matrix and a permutation matrix. 

\item[$(ii)$]  A matrix is a \emph{generalized triangular matrix} if it is of the form $P_1TP_2$ where $P_1$ and $P_2$ are permutation matrices, and $T$ is upper triangular. We denote by $\GT(V,\beta)$ the subset of all $\tvarphi\in\GL(V)$ such that $\tvarphi$ corresponds to a generalized triangular matrix with respect to the basis $\beta$. We also denote by $\PGT(V,\beta)$ the image of $\GT(V,\beta)$ in $\PGL(V)$. 
\end{enumerate}
\end{definition}

The groups $\GP(V,\beta)$, $\PGP(V,\beta)$, and the sets $\GT(V,\beta)$ and $\PGT(V,\beta)$ are indeed dependent on the choice of a basis as the notation suggests.

In order to show that the automorphism group of most simple hypersurfaces consists of generalized permutations, we need a notion of sparsity of the set of monomials appearing in a homogeneous form in a given basis.

\begin{definition}
\label{sparsity}
Let $F\in S^d(V^*)$, and $\beta$ and $\beta^*$ be dual bases of $V$ and $V^*$, respectively.
\begin{enumerate}
    \item[$(i)$] We define the distance between two monomials $\prod x_i^{a_i}$ and $\prod x_i^{b_i}$ as the sum $\sum_{i}|a_i-b_i|$. 
    \item[$(ii)$] We define the \emph{sparsity} of $F$ with respect to $\beta$, denoted by $\spar(F)$, as the minimum of the distance between the monomials of $F$ after identifying $S(V^*)\simeq\C[x_0,\ldots,x_{n+1}]$ via the basis $\beta$.    
    \item[$(iii)$] We define the variables of $F$, denoted by $\vars(F)$, as the set of variables appearing in $F$, i.e., $\vars(F)$ is the smallest subset of $\beta^*$ such that $F$ is contained in $S^d(W)$ with $W$ the span of $\vars(F)$.
\end{enumerate}

\end{definition}

Note that the sparsity of a homogeneous form is always an even non-negative integer. When the sparsity of a form is greater than 2, we can compute its differential rank directly as we show in the following lemma

\begin{lemma} \label{lem:vars}
Let $F\in S(V^*)$ be homogeneous of degree $d\geq 2$. If $\spar(F)>2$, then $\drank(F)$ equals the cardinality of $\vars(F)$.
\end{lemma}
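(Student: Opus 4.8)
The plan is to compute the image of $D_F\colon V^* \to S(V^*)$ directly in coordinates. Write $W \subseteq V^*$ for the span of $\vars(F)$, say $\vars(F) = \{x_{i_1},\ldots,x_{i_k}\}$, so that $F \in S^d(W)$ and $k = \#\vars(F)$. First I would observe that $\frac{\partial F}{\partial x_j} = 0$ for every $x_j \notin \vars(F)$, since $F$ involves no such variable; hence the image of $D_F$ is spanned by the $k$ partial derivatives $\frac{\partial F}{\partial x_{i_1}},\ldots,\frac{\partial F}{\partial x_{i_k}}$, which gives immediately the upper bound $\drank(F) \le k$. The substance of the lemma is the reverse inequality: these $k$ partials are linearly independent.

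The key step is the following. Suppose $\sum_{\ell=1}^{k} c_\ell \frac{\partial F}{\partial x_{i_\ell}} = 0$ for scalars $c_\ell \in \C$, not all zero. Among all monomials $\mu = \prod x_i^{a_i}$ appearing in $F$, I would single out one, say $\mu_0 = \prod x_i^{a_i}$, such that the exponent vector $(a_i)$ is extremal in a suitable sense — for instance, maximal for the lexicographic order on exponent vectors among monomials $\mu$ of $F$ for which $c_\ell a_{i_\ell} \ne 0$ for some $\ell$ with $a_{i_\ell}>0$; since some $c_\ell \ne 0$ and $F$ is genuinely a polynomial in all of $x_{i_1},\ldots,x_{i_k}$, such a monomial exists. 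Differentiating $\mu_0$ with respect to $x_{i_\ell}$ (for those $\ell$ with $a_{i_\ell} > 0$) produces the monomial $\mu_0 / x_{i_\ell}$ with coefficient $a_{i_\ell}$ times the coefficient of $\mu_0$ in $F$. The cancellation hypothesis forces these contributions to be killed by contributions coming from other monomials $\mu'$ of $F$ with $\frac{\partial \mu'}{\partial x_{i_{\ell'}}} = \mu_0/x_{i_\ell}$ for some $\ell'$, i.e. $\mu' = (\mu_0/x_{i_\ell})\, x_{i_{\ell'}}$. But such a $\mu'$ differs from $\mu_0$ in exactly two exponents (by $+1$ in slot $i_{\ell'}$ and $-1$ in slot $i_\ell$, when $\ell \ne \ell'$), so the distance between $\mu_0$ and $\mu'$ is $2$, contradicting $\spar(F) > 2$ unless $\mu' = \mu_0$, i.e. $\ell' = \ell$. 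Hence no cancellation is possible: the coefficient of $\mu_0/x_{i_\ell}$ in $\sum_\ell c_\ell \frac{\partial F}{\partial x_{i_\ell}}$ equals $c_\ell \, a_{i_\ell} \cdot (\text{coeff. of } \mu_0 \text{ in } F)$, which must therefore vanish for all $\ell$, forcing $c_\ell = 0$ whenever $a_{i_\ell} > 0$. Iterating this over all variables (each $x_{i_\ell}$ actually occurs in some monomial of $F$, by definition of $\vars(F)$) yields $c_\ell = 0$ for all $\ell$, a contradiction.

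I expect the main obstacle to be bookkeeping: making the choice of the extremal monomial $\mu_0$ precise enough that the argument closes for every index $\ell$ simultaneously, rather than just one. A clean way to handle this is to fix a single monomial ordering once and for all, take $\mu_0$ to be the largest monomial of $F$ in that ordering, and note that for each $\ell$ with $a_{i_\ell}>0$ the monomial $\mu_0/x_{i_\ell}$ can only be hit, among derivatives of monomials of $F$, by $\mu_0$ itself (any other preimage $(\mu_0/x_{i_\ell})x_{i_{\ell'}}$ with $\ell'\ne\ell$ is at distance $2$ from $\mu_0$ and hence not a monomial of $F$). This shows the coefficient of $\mu_0/x_{i_\ell}$ in the vanishing combination is exactly $c_\ell a_{i_\ell}$ times the (nonzero) coefficient of $\mu_0$, so $c_\ell = 0$; then replace $F$ by its truncation removing $\mu_0$ — which still has sparsity $>2$ and whose variable set may be smaller — and induct on the number of monomials, or simply repeat with the next-largest monomial, to kill the remaining $c_\ell$. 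Combined with the trivial upper bound, this gives $\drank(F) = \#\vars(F)$.
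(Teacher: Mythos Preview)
Your argument is correct and rests on the same key observation as the paper: if a monomial $\nu$ appeared in both $\partial F/\partial x_i$ and $\partial F/\partial x_j$ with $i\neq j$, then $\nu x_i$ and $\nu x_j$ would be distinct monomials of $F$ at distance~$2$, contradicting $\spar(F)>2$. The paper, however, packages this much more directly: it simply notes that no two distinct partials $\partial F/\partial x_i$ and $\partial F/\partial x_j$ share a common monomial, which immediately gives their linear independence (equivalently, $\ker D_F$ is exactly the span of $\beta^*\setminus\vars(F)$). Your lexicographic ordering, extremal-monomial selection, and induction on truncations all work, but they are heavier than required --- once you observe that the supports of the nonzero partials are pairwise disjoint, the lemma is a one-liner.
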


\begin{proof}
Let where $I=\beta^*\setminus \vars(F)$. Now, note that $\ker D_F$ equals the span of $I$. This follows from the fact that $\frac{\partial F}{\partial x_j}$ and $\frac{\partial F}{\partial x_j}$ for $i,j=0,\ldots,n+1$ with $i\neq j$ never share a common monomial due to $\spar(F)>2$.  
\end{proof}

We now define a relation on the dual basis $\beta^*$ of $V^*$.

\begin{definition}\label{def:sparcity}
Let $F\in S(V^*)$ be homogeneous of degree $d\geq 2$, and $\beta$ and $\beta^*$ be dual bases of $V$ and $V^*$, respectively. We endow the set $\beta^*=\{x_0,\ldots,x_{n+1}\}$ with the relation $\le_{F}$ given by
$$
x_i\le_F x_j \quad \Longleftrightarrow \quad \vars\left(\frac{\partial F}{\partial x_i}\right)\subseteq \vars\left(\frac{\partial F}{\partial x_j}\right)
$$
\end{definition}

\begin{remark}
The set $\beta^*$ together with the relation $\le_F$ may not be a partial order. We call a partially ordered set a poset for brevity. Indeed, let $\beta^*=\{x_0,x_1,x_2,x_3\}$ and consider first 
$$F=x_0^{d-1}x_1+x_1^{d-1}x_0+x_2^d+x_3^d.$$ 
Then $(\beta^*,\le_F)$ is not a poset since $x_1\le_{(F,\beta)}x_2$ and $x_2\le_{(F,\beta)}x_1$. On the other hand, if we consider instead
$$F=x_0^{d-1}x_1+x_1^{d-1}x_2+x_2^{d-1}x_3+x_3^{d-1}x_0,$$ we have that $(\beta^*,\le_F)$ is a poset since $x_i$ and $x_j$ are not comparable whenever $i\neq j$. We say that such a poset, where $x_i\le_F x_j$ implies $i=j$, is trivial.
\end{remark}

We now come to the main theorem of this section which provides strong restrictions on the shape of the automorphism group of a smooth hypersurface under certain sparsity conditions.

\begin{theorem}
\label{theopseudotriang}
Let $X=V(F)\subset \PP(V)$ be a smooth hypersurface of dimension $n\geq 1$ and degree $d\geq 3$ with $(n,d)\neq (1,3),(2,4)$. Let also $\beta$ and $\beta^*$ be dual bases of $V$ and $V^*$, respectively. If $\spar(F)>4$ and $(\beta^*,\le_F)$ is a poset then $\Aut(X)\subseteq\PGT(V,\beta)$.
\end{theorem}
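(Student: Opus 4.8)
\emph{Step 1: reduction to linear maps and the differential rank of the partials.} First I would invoke \cref{matsumura-monsk}: since $n\ge 1$, $d\ge 3$ and $(n,d)\neq(1,3),(2,4)$, every $\varphi\in\Aut(X)$ lifts to some $\tvarphi\in\GL(V)$ with $\tvarphi^*(F)=\lambda F$, $\lambda\in\CC^*$. Moreover, $X$ smooth forces every variable to occur in $F$ (otherwise $X$ would be a cone, hence singular as $d\ge 2$), so $F_i:=\partial F/\partial x_i\neq 0$ for all $i$; write $W_i:=\vars(F_i)$, so that $x_a\le_F x_b\iff W_a\subseteq W_b$, and the hypothesis is that this relation is a partial order. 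Since differentiating with respect to $x_i$ sends each monomial $m$ of $F$ divisible by $x_i$ to a nonzero multiple of $m/x_i$, and does not change distances between such monomials, one gets $\spar(F_i)\ge\spar(F)>4>2$; hence \cref{lem:vars} yields $\drank(F_i)=|W_i|$ for every $i$.

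\emph{Step 2: transporting the differential rank.} Fix $\varphi\in\Aut(X)$ and write $\tvarphi^*(x_i)=\ell_i=\sum_j c_{ij}x_j$, with support $S_i=\{j:c_{ij}\neq 0\}$. Then $\partial F/\partial\ell_i=\sum_{j\in S_i}c_{ij}F_j$, and \cref{diff-method} gives $|W_i|=\drank(\partial F/\partial x_i)=\drank(\partial F/\partial\ell_i)=\drank\big(\sum_{j\in S_i}c_{ij}F_j\big)$. Here $\spar(F)>4$ enters as a no-cancellation principle: the monomials of $\sum_{j\in S_i}c_{ij}F_j$ produced by two \emph{distinct} monomials of $F$ remain at distance $\ge\spar(F)-2\ge 4>0$ after each is divided by one variable, hence cannot coincide. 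Therefore $\vars\big(\sum_{j\in S_i}c_{ij}F_j\big)=\bigcup_{j\in S_i}W_j$, so $|W_i|\le|\bigcup_{j\in S_i}W_j|$; and, again by the same principle, one obtains an explicit description of $\ker D_{\sum_{j\in S_i}c_{ij}F_j}$ in terms of the $c_{ij}$ and of the pairs of variables dividing a common monomial of $F$, hence an exact value of $\drank\big(\sum_{j\in S_i}c_{ij}F_j\big)$.

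\emph{Step 3: the combinatorial core and the conclusion.} Equating that exact value with $|W_i|$, and using that $\le_F$ is a partial order (so the $W_j$ are pairwise distinct sets), one shows that $\{W_j:j\in S_i\}$ has a $\le_F$-greatest element; antisymmetry makes the index $\sigma(i)\in S_i$ realising it unique, so $\bigcup_{j\in S_i}W_j=W_{\sigma(i)}$ and $x_j\le_F x_{\sigma(i)}$ for all $j\in S_i$. Invertibility of $(c_{ij})$ (every column is nonzero) together with the same analysis applied to $\varphi^{-1}$ then forces $\sigma$ to be a permutation of $\{0,\dots,n+1\}$. Now fix a linear extension $\preceq$ of $(\beta^*,\le_F)$ with rank function $\mathrm{rk}$, order the columns of $(c_{ij})$ by $\mathrm{rk}$ and the rows by $i\mapsto\mathrm{rk}(\sigma(i))$ (a bijection, since $\sigma$ and $\mathrm{rk}$ are): any nonzero entry in row $i$, column $j$ then satisfies $\mathrm{rk}(j)\le\mathrm{rk}(\sigma(i))$, i.e. the reordered matrix is triangular. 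Since reordering rows and columns is multiplication on either side by permutation matrices, $\tvarphi$ is a generalized triangular matrix relative to $\beta$, i.e. $\varphi\in\PGT(V,\beta)$; as $\varphi$ was arbitrary, $\Aut(X)\subseteq\PGT(V,\beta)$.

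\emph{Where the difficulty lies.} Every step except the combinatorial core is a citation or a short computation built on the no-cancellation principle coming from $\spar(F)>4$. The genuinely hard part is deducing, from $\drank\big(\sum_{j\in S_i}c_{ij}F_j\big)=|W_i|$, that $S_i$ has a $\le_F$-greatest element and that the resulting $\sigma$ is bijective: one cannot simply quote \cref{lem:vars}, because the linear combination $\sum_{j\in S_i}c_{ij}F_j$ may have sparsity exactly $2$, so the argument must use the full strength of $\spar(F)>4$ together with the poset hypothesis — which, as the remark preceding the theorem shows, cannot be dropped.
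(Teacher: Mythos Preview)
Your plan follows the paper's strategy almost verbatim: lift to $\tvarphi\in\GL(V)$, use \cref{diff-method} to equate $\drank(F_i)$ with $\drank\bigl(\sum_{j\in S_i}c_{ij}F_j\bigr)$, translate this into a combinatorial condition on the sets $W_j$, extract a distinguished index $\sigma(i)\in S_i$, and reorder by a linear extension of $\le_F$. You are also right that one cannot literally quote \cref{lem:vars} for the linear combination, since $\spar\bigl(\sum_{j\in S_i}c_{ij}F_j\bigr)$ can equal $2$; the paper glosses over this. (The conclusion $\drank\bigl(\sum_{j\in S_i}c_{ij}F_j\bigr)=\bigl|\bigcup_{j\in S_i}W_j\bigr|$ is nonetheless correct: smoothness together with $\spar(F)>4$ forces every variable $x_k$ to appear with exponent $\ge 2$ in some monomial of $F$, and from this one checks directly that $\ker D_G$ is exactly the span of $\beta^*\setminus\bigcup_{j\in S_i}W_j$.)

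The genuine gap is in your Step~3. From the equality $|W_i|=\bigl|\bigcup_{j\in S_i}W_j\bigr|$ and the poset axiom alone you \emph{cannot} conclude that $\{W_j:j\in S_i\}$ has a $\le_F$-greatest element: nothing prevents $S_i$ from consisting of several incomparable indices whose $W_j$'s happen to have a union of size $|W_i|$. The paper closes this gap by an induction on $N:=\drank(F_{x_i})=|W_i|$. At the minimal level, every $j\in S_i$ satisfies $|W_j|\le|\bigcup W_{j'}|=|W_i|$, hence $|W_j|=|W_i|$ by minimality, hence $W_j=\bigcup W_{j'}$ for all $j\in S_i$, so antisymmetry forces $|S_i|=1$. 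At level $N$, the inductive hypothesis says the span $L$ of variables with $|W|<N$ is $\tvarphi^*$-stable; invertibility then forces $S_i$ to contain some $j$ with $|W_j|\ge N$, and the equality $|W_j|\le|\bigcup W_{j'}|=N$ pins $|W_j|=N$ and $W_{j'}\subseteq W_j$ for all $j'\in S_i$. The same induction simultaneously yields that $\sigma$ restricted to each level is a permutation (again by invertibility of $\tvarphi^*$ on $L$). Your suggestion to obtain bijectivity of $\sigma$ by ``the same analysis applied to $\varphi^{-1}$'' does not obviously work without this inductive structure: knowing that the inverse matrix also has greatest elements in each row does not by itself show the two maps are mutual inverses. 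In short, the missing idea is precisely this induction on $\drank(F_{x_i})$, which is what makes both the ``greatest element'' claim and the bijectivity of $\sigma$ go through.
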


\begin{proof}
    Let $\varphi\in \Aut(X)$. By \cref{matsumura-monsk}, $\varphi\in \PGL(V)$ so we can pick a representative $\tvarphi\in \GL(V)$ such that $\tvarphi^*(F)=\lambda F$. We represent $\tvarphi$ in the basis $\beta$ by a matrix $(\tvarphi_{ji})$ so that $\tvarphi^*\colon V^*\to V^*$ is given in the basis $\beta^*$ by the matrix $(\tvarphi_{ij})$. With this notation, we have 
    $$\frac{\partial F}{\partial (\tvarphi^*x_i)}=\sum_j \tvarphi_{ij} \frac{\partial F}{\partial x_j}\,.$$
    Remark  that since $\spar(F)>4$ it follows that $\spar\left(\frac{\partial F}{\partial (\tvarphi^*x_i)}\right)>2$. Hence, by \cref{lem:vars} we have
    \begin{align} \label{drank-vars}
    \drank\left(\frac{\partial F}{\partial (\tvarphi^*x_i)}\right)=\#\vars\left(\frac{\partial F}{\partial (\tvarphi^*x_i)}\right)=\#\bigcup_{\{j\mid\tvarphi_{ij}\neq 0\}} \vars\left(\frac{\partial F}{\partial x_j}\right)
    \end{align}

    By \cref{diff-method}, we have $\drank\left(\frac{\partial F}{\partial x_i}\right)=\drank\left(\frac{\partial F}{\partial (\tvarphi^*x_i)}\right)$. We claim that 
    \begin{align*}
    \drank\left(\frac{\partial F}{\partial x_i}\right)=\drank\left(\frac{\partial F}{\partial (\tvarphi^*x_i)}\right) \Longleftrightarrow \left\{\begin{array}{l}
        \tvarphi^*x_i=\tvarphi_{ij}\cdot x_{j}+\tvarphi_{ij_1}\cdot x_{j_1}+\cdots+\tvarphi_{ij_k}\cdot x_{j_k}, \\[1ex]
        \mbox{with } \drank\left(\frac{\partial F}{\partial x_i}\right)=\drank\left(\frac{\partial F}{\partial x_{j}}\right) \\[1.5ex]
        \mbox{and } x_{j_\ell}\le_F x_{j} \mbox{ for all } \ell\in \{1,\ldots,k\}. 
        \end{array}\right.
    \end{align*}
    
    Indeed, the converse implication is trivial. For short, along the proof of the claim we will write $F_{x_i}$ to denote $\frac{\partial F}{\partial x_i}$.  
    To prove the direct implication, take first a variable $x_i$ such that $F_{x_i}$ has minimal possible differential rank. Then the only possibility is $\tvarphi^*x_i=\tvarphi_{ij}x_{j}$ with $\drank(F_{x_i})=\drank(F_{x_j})$ since otherwise the differential rank would increase. Note that since $\tvarphi$ is an invertible matrix, the assignment $x_i\mapsto x_{j}$ gives a permutation of the variables $x_i$, for which $\drank (F_{x_i})$ attains the minimum.

    Proceed now by induction. Assume the claim holds for all variables with differential rank less than $N$ and that the assignment $x_i\mapsto x_{j}$ is a permutation of the set of variables $x_i$ such that $\drank(F_{x_i})< N$. 
    
    Let now $x_i$ be a variable with $\drank(F_{x_i})=N$. The image $\tvarphi^*x_i=\tvarphi_{ij}\cdot x_{j}+\tvarphi_{ij_1}\cdot x_{j_1}+\cdots+\tvarphi_{ij_k}\cdot x_{j_k}$ cannot be composed only of variables $x_j, x_{j_m}$ such that $F_{x_j}$ and $F_{x_{j_m}}$ have smaller differential rank than $F_{x_i}$ since otherwise $\tvarphi$ is not an invertible matrix. Hence, we have  $\drank(F_{x_i})=\drank(F_{x_j})$. By \eqref{drank-vars}, we have $x_{j_\ell}\le_F x_{j}$ for all $\ell\in \{1,\ldots,k\}$. Finally, since $\tvarphi$ is an invertible matrix, the assignment $x_i\mapsto x_{j}$ provides a permutation of the set of variables of differential rank equal to $N$. This proves the claim.

    To conclude the proof, let $P_1$ be the permutation matrix that is the inverse of the assignments $x_i\mapsto x_{j}$ and let $P_2$ be any permutation matrix reordering the variables so that the differential rank of $x_i$ is smaller or equal than that of $x_j$ whenever $i<j$. Now, $P_1\tvarphi P_2$ is an upper triangular matrix, proving that $\tvarphi\in \GT(V,\beta)$ and so $\varphi\in\PGT(V,\beta)$.
\end{proof}

\begin{remark} \label{rmk:poset}
The proof of \cref{theopseudotriang} provides precise constraints on the shape of a matrix $\tvarphi\in \GL(V)$ representing an automorphism of $X$. More precisely, in the triangular matrix $P_1\tvarphi P_2$, at each row $j$, the non-zero elements can only appear at the columns indexed by lower terms with respect to the poset $(\beta^*,\le_F)$. In particular, when the poset is trivial the matrix $P_1\tvarphi P_2$ is diagonal and so $\tvarphi$ is always a generalized permutation in the basis $\beta^*$. We state this fact in the following corollary.
\end{remark}

\begin{corollary}
\label{corpseudoperm}
Let $X=V(F)\subset \PP(V)$ be a smooth hypersurface of dimension $n\geq 1$ and degree $d\geq 3$ with $(n,d)\neq (1,3),(2,4)$. Let also $\beta$ and $\beta^*$ be dual bases of $V$ and $V^*$, respectively. If $\spar(F)>4$ and $(\beta^*,\le_F)$ is the trivial poset then 
$\Aut(X)\subseteq\PGP(V,\beta)$.
\end{corollary}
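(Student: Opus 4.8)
The plan is to deduce this directly from \cref{theopseudotriang} together with the finer information about the shape of $\tvarphi$ extracted in its proof and recorded in \cref{rmk:poset}. First I would apply \cref{theopseudotriang}: since $\spar(F)>4$ and $(\beta^*,\le_F)$ is a poset, any $\varphi\in\Aut(X)$ lifts (by \cref{matsumura-monsk}) to some $\tvarphi\in\GL(V)$ with $\tvarphi^*(F)=\lambda F$, and the proof of that theorem shows that, after choosing the permutation $x_i\mapsto x_{j}$ constructed there, one has
\[
\tvarphi^* x_i=\tvarphi_{ij}\cdot x_{j}+\tvarphi_{ij_1}\cdot x_{j_1}+\cdots+\tvarphi_{ij_k}\cdot x_{j_k},\qquad \mbox{with } x_{j_\ell}\le_F x_{j}\ \mbox{ for all }\ \ell\in\{1,\ldots,k\}.
\]

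The key, and essentially only, step is to observe that triviality of the poset collapses this expression. By definition of a trivial poset, $x_{j_\ell}\le_F x_{j}$ forces $j_\ell=j$; hence the terms $\tvarphi_{ij_1}x_{j_1},\ldots,\tvarphi_{ij_k}x_{j_k}$ cannot occur, and $\tvarphi^*x_i=\tvarphi_{ij}\cdot x_{j}$ is a single nonzero multiple of a basis vector. As $i$ ranges over $\{0,\ldots,n+1\}$ the resulting indices $j$ run over a permutation of $\{0,\ldots,n+1\}$, because $\tvarphi^*$ is invertible. Therefore the matrix of $\tvarphi^*$ in the basis $\beta^*$ has exactly one nonzero entry in each row and each column, i.e.\ it is a generalized permutation matrix (the product of a permutation matrix and a diagonal matrix).

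Finally I would transfer this back to $\tvarphi$ acting on $V$. Recall that the matrix of $\tvarphi$ in the basis $\beta$ is the transpose of the matrix of $\tvarphi^*$ in $\beta^*$; since the transpose of a generalized permutation matrix is again a generalized permutation matrix, we get $\tvarphi\in\GP(V,\beta)$, and hence $\varphi=\pi(\tvarphi)\in\PGP(V,\beta)$. As $\varphi$ was arbitrary this yields $\Aut(X)\subseteq\PGP(V,\beta)$. I do not expect any genuine obstacle here beyond keeping the $\tvarphi\leftrightarrow\tvarphi^*$ bookkeeping straight: all the substantive work is already carried out in \cref{theopseudotriang}, and the trivial-poset hypothesis is used only to rule out the off-diagonal entries allowed by that theorem.
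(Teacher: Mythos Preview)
Your argument is correct and is exactly the approach the paper takes: the corollary is recorded immediately after \cref{rmk:poset}, which already observes that when $(\beta^*,\le_F)$ is trivial the matrix $P_1\tvarphi P_2$ produced in the proof of \cref{theopseudotriang} is diagonal, so $\tvarphi$ is a generalized permutation. Your write-up simply unpacks this observation with the $\tvarphi\leftrightarrow\tvarphi^*$ bookkeeping made explicit.
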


As an application of the previous result we will show that for simple hypersurfaces, all automorphisms are generalized permutations. Our definition of simple hypersurfaces is inspired on the main result of \cite{Zhe22}, which claims that every abelian group acting on any smooth hypersurface can be realized as a subgroup of the automorphism groups of these simple hypersurfaces.

\begin{definition}
\label{simple-hyp}
We say that a smooth hypersurface $X=V(F)$ given by a homogeneous form $F\in S^d(V^*)$ of degree $d$ is simple if, after possibly renaming the elements of the basis $\beta^*$ of $V^*$, the form $F$ is given by
\begin{equation}
\label{simpleeq}    
F=K_{i_1}+K_{i_2}+\cdots+K_{i_\ell}+T_{j_1}+\cdots+T_{j_m}\,,
\end{equation}
where $
K_i=x_1^{d-1}x_2+x_2^{d-1}x_3+\cdots+x_i^{d-1}x_1
$
is the Klein form of degree $d$ in $i$ variables,
$
T_j=y_1^{d-1}y_2+\cdots+y_{j-1}^{d-1}y_j+y_j^d
$
is the Delsarte form of degree $d$ in $j$ variables, and all such Klein and Delsarte forms in \eqref{simpleeq} have linearly independent variables. In consequence $i_1+\cdots+i_\ell+j_1+\cdots+j_m=\dim(V)=n+2$.
\end{definition}

\begin{corollary}
\label{corsemiperm}

Let $X=V(F)\subset \PP(V)$ be a simple hypersurface of dimension $n\geq 1$ and degree $d\geq 3$ with $(n,d)\neq (1,3),(2,4)$. Let also $\beta$ and $\beta^*$ be dual bases of $V$ and $V^*$, respectively. If $\spar(F)>4$ then 
$\Aut(X)\subseteq\PGP(V,\beta)$.
\end{corollary}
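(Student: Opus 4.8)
The plan is to reduce the statement to \cref{theopseudotriang} by analysing the relation $(\beta^*,\le_F)$ attached to a simple hypersurface whose form has sparsity $>4$. Using only that $F$ is a sum of Klein blocks $K_i$ and Delsarte blocks $T_j$ supported on pairwise disjoint coordinates, I would first compute the variable sets of the first partial derivatives blockwise: for $K_i=x_1^{d-1}x_2+\cdots+x_i^{d-1}x_1$ one has $\partial F/\partial x_k=(d-1)x_k^{d-2}x_{k+1}+x_{k-1}^{d-1}$ with cyclic indices, hence $\vars(\partial F/\partial x_k)=\{x_{k-1},x_k,x_{k+1}\}$; for $T_j=y_1^{d-1}y_2+\cdots+y_{j-1}^{d-1}y_j+y_j^d$ one has $\vars(\partial F/\partial y_k)=\{y_{k-1},y_k,y_{k+1}\}$ for $1<k<j$, with the endpoint sets $\{y_1,y_2\}$ and $\{y_{j-1},y_j\}$ (and $\{y_1\}$ if $j=1$). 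Since distinct blocks use disjoint variables, $x\le_F x'$ forces $x$ and $x'$ into the same block, and reading off the inclusions shows that $\le_F$ is trivial on Klein blocks of size $\ge 4$ and on single Fermat monomials, while on a Delsarte block of size $\ge 3$ its only nontrivial relations are $y_1<_F y_2$ and $y_j<_F y_{j-1}$.

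Next I would use the hypothesis $\spar(F)>4$ to deal with the small blocks $K_2$, $K_3$ and $T_2$, which are precisely the ones on which $\le_F$ fails to be antisymmetric, by confronting their internal sparsity with $\spar(F)$; once these are handled, $(\beta^*,\le_F)$ is a poset and \cref{theopseudotriang} applies, giving $\Aut(X)\subseteq\PGT(V,\beta)$. By \cref{rmk:poset}, after multiplying by permutation matrices a representing matrix $\tvarphi$ is upper triangular with off-diagonal entries only in the (at most two per Delsarte block of size $\ge 3$) positions corresponding to $y_1<_F y_2$ and $y_j<_F y_{j-1}$; in particular, if $F$ has no Delsarte block of size $\ge 3$ the poset is already trivial and \cref{corpseudoperm} gives the conclusion directly.

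It then remains to rule out these last off-diagonal entries, upgrading $\PGT(V,\beta)$ to $\PGP(V,\beta)$; this is the core of the argument. For a representative $\tvarphi$ of an automorphism with $\tvarphi^*(F)=\lambda F$ of the shape above, the induced permutation $\sigma$ of $\beta^*$ preserves differential ranks, hence carries blocks to blocks; writing $\tvarphi^*y_2=c\,z_2+c'\,z_1$ for the off-diagonal coefficient associated to $y_1<_F y_2$ (with $z_1,z_2=\sigma(y_1),\sigma(y_2)$), I would compare in $\tvarphi^*(F)=\lambda F$ the coefficient of $z_1^d$: this monomial is absent from $F$ (the target Delsarte block contains $z_1^{d-1}z_2$ and $z_{j'}^d$ but no $z_1^d$), while in $\tvarphi^*(F)$ it is produced only by $\tvarphi^*(y_1^{d-1}y_2)$, with coefficient a nonzero multiple of $c'$; hence $c'=0$. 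The relation $y_j<_F y_{j-1}$ is treated identically, comparing the coefficient of an intermediate monomial such as $z_{j-1}^{d-2}z_j^2$. This forces $\tvarphi\in\GP(V,\beta)$, as wanted. I expect the main obstacle to be precisely the delicate bookkeeping of the second paragraph — pinning down, via $\spar(F)>4$, the behaviour of the exceptional small blocks $K_2$, $K_3$, $T_2$ — together with checking in the last step that no cancellation among the $\tvarphi^*$-images of the remaining monomials of $F$ can resurrect the coefficient $c'$.
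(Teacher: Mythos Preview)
Your approach is the paper's: reduce to \cref{theopseudotriang}, then kill the residual off-diagonal entries by comparing a single monomial in $\tvarphi^*(F)=\lambda F$. You are actually more careful than the paper in two respects. First, you correctly observe that a Delsarte block $T_j$ with $j\ge3$ contributes \emph{two} nontrivial relations, $y_1<_F y_2$ and $y_j<_F y_{j-1}$; the paper records only the former, though the monomial it then invokes, $y_{j-1}^{d-2}y_j^2$, is precisely the one that disposes of the latter (your monomial $z_1^d$ handles the former). Second, you flag the small blocks $K_2,K_3,T_2$, whose presence does break antisymmetry of $\le_F$; the paper simply does not address this. Note, however, that your proposed remedy via $\spar(F)>4$ does not exclude them: $K_3$ contributes internal pairwise distance $2d-2$, which exceeds $4$ for every $d\ge4$, and likewise $K_2$ contributes $2d-4>4$ once $d\ge5$. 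So that step of your plan needs a separate argument, not just the sparsity bound. In the actual applications of \S\ref{sec3} (full Fermat, Delsarte, Klein with $n\ge2$) no such small block occurs, and both your argument and the paper's go through there.
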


\begin{proof}
All hypothesis of \cref{theopseudotriang} are easily verified. Moreover, the only non-trivial relation in the poset $(\beta^*,\le_{F})$ is  $y_1\le_F y_2$ for each Delsarte form $T_j$ in $F$. Let now $\tvarphi\in \GL(V)$ represent an automorphism $\varphi\colon X\rightarrow X$. Letting $P_1$ and $P_2$ be as in the proof of \cref{theopseudotriang}, we have that $P_1\tvarphi P_2$ is an upper triangular matrix. Moreover, by \cref{rmk:poset} the matrix $P_1\tvarphi P_2$ is a diagonal by blocks matrix where the block corresponding to Klein forms is diagonal and the block corresponding to Delsarte forms is given by
$$
T=\begin{pmatrix}
D & 0 \\ 0 & M    
\end{pmatrix}
$$
where $M=\begin{pmatrix}
a & b \\ 0 & d    
\end{pmatrix}$ is a $2\times 2$ upper triangular matrix and $D$ is diagonal.
Since the matrix $\tvarphi^*$ must also preserve the equation $F$, a direct computation shows that $b=0$. Indeed, if $b\neq 0$ then the monomial $y_{j-1}^{d-2}y_{j}^2$ appears with non-zero coefficient in $\tvarphi^*(T_j)$ contradicting the fact that $\varphi$ is an automorphism of $X$. This yields that $P_1\tvarphi P_2$ is diagonal which proves that $\tvarphi$ is a generalized permutation.   
\end{proof}

\begin{remark} \
\begin{enumerate}
\item[$(i)$] 
We do not know to what extent we can improve \cref{theopseudotriang}, i.e., characterize hypersurfaces where all automorphisms are generalized triangular matrices or generalized permutation matrices in some basis. We know that this is not always the case for some sporadic varieties, such as the Klein cubic threefold, but it might be the case for most hypersurfaces. 

\item[$(ii)$] 
A folklore conjecture is that, with the exception of some finite instances of $(n,d)$, where $n$ is the dimension and $d$ is the degree, the hypersurface with largest automorphism group is the Fermat hypersurface. A first step towards this conjecture would be to show that the Fermat hypersurface is the one with the largest automorphism group among hypersurfaces whose automorphism group is given by generalized permutation matrices.
\end{enumerate}
\end{remark}

\section{Automorphism groups of classical hypersurfaces}
\label{sec3}

In this section we apply \cref{corsemiperm} to compute the automorphism group of several classical hypersurfaces.

\subsection{Fermat hypersurfaces} \label{ssec:fermat}

The Fermat hypersurface $X=V(F)$ of dimension $n$ and degree $d$ is given in the basis $\beta^*$ by the form 
$$F=x_0^{d}+x_1^{d}+x_2^{d}+x_3^{d}+ \cdots +x_{n}^{d}+x_{n+1}^{d}\in S^d(V^*)\,.$$

The following theorem is well known \cite{Shi88,Kon02}. Nevertheless, we prove it as a straightforward application of \cref{corsemiperm}. We remark that the Fermat hypersurface is simple since the form $F$ is given as the sum of $n+1$ Delsarte forms of degree $d$ in $1$ variable.

\begin{proposition}
    \label{aut-fermat}
The automorphism group of the Fermat hypersurface $X=V(F)$ of dimension $n\geq 1$ and degree $d\geq 3$, with $(n,d)\neq  (1,3), (2,4)$ is isomorphic to 
$$\Aut(X)= (\ZZ/d\ZZ)^{n+1}\rtimes S_{n+2}$$
\end{proposition}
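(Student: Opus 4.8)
The plan is to apply \cref{corsemiperm} to reduce to the case of generalized permutation matrices, and then to determine exactly which such matrices preserve the Fermat form $F = \sum_{i=0}^{n+1} x_i^d$. First I would note that $\spar(F) = 2d > 4$ since $d \geq 3$, so the hypothesis of \cref{corsemiperm} is satisfied; as remarked, the Fermat hypersurface is simple (it is the sum of $n+2$ Delsarte forms $T_1 = y_1^d$ in one variable each), so $\Aut(X) \subseteq \PGP(V,\beta)$. Thus every automorphism is represented by some $\tvarphi \in \GL(V)$ whose matrix in the basis $\beta$ is a generalized permutation, i.e.\ $\tvarphi = P \cdot \diag(\lambda_0,\ldots,\lambda_{n+1})$ for a permutation matrix $P$ and nonzero scalars $\lambda_i$.

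Next I would impose the condition $\tvarphi^*(F) = \mu F$ for some $\mu \in \CC^*$. Writing the action on the dual basis, $\tvarphi^* x_i = \lambda_i x_{\rho(i)}$ for the permutation $\rho$ associated to $P$, so $\tvarphi^*(F) = \sum_i \lambda_i^d x_{\rho(i)}^d$. Comparing with $\mu F = \mu \sum_j x_j^d$ forces $\lambda_i^d = \mu$ for every $i$. After rescaling the representative $\tvarphi$ (which does not change $\varphi \in \PGL(V)$) we may assume $\mu = 1$, hence each $\lambda_i$ is a $d$-th root of unity. Conversely, any such choice — any permutation $\rho \in S_{n+2}$ together with any $(n+2)$-tuple of $d$-th roots of unity — does preserve $F$. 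This identifies the subgroup of $\GL(V)$ preserving $F$ up to scalar, modulo the rescaling freedom, with $(\ZZ/d\ZZ)^{n+2} \rtimes S_{n+2}$, where $S_{n+2}$ acts on $(\ZZ/d\ZZ)^{n+2}$ by permuting coordinates.

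Finally I would pass to $\PGL(V)$ by quotienting out the scalars: the scalar matrices contained in the above group are exactly the $\zeta \cdot \Id$ with $\zeta^d = 1$, forming the diagonal copy of $\ZZ/d\ZZ$ inside $(\ZZ/d\ZZ)^{n+2}$. Quotienting $(\ZZ/d\ZZ)^{n+2}$ by this diagonal gives $(\ZZ/d\ZZ)^{n+1}$, and since the diagonal is $S_{n+2}$-invariant the semidirect product structure descends, yielding $\Aut(X) = (\ZZ/d\ZZ)^{n+1} \rtimes S_{n+2}$. The only genuinely delicate point is the bookkeeping in this last quotient — checking that the diagonal $\ZZ/d\ZZ$ is precisely the kernel of $\GP \to \PGP$ restricted to this stabilizer and that the complement $S_{n+2}$ survives intact — but this is a routine computation with no real obstacle; everything substantive has already been done by \cref{corsemiperm}.
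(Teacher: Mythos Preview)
Your proof is correct and follows essentially the same approach as the paper: both invoke \cref{corsemiperm} (using $\spar(F)=2d>4$) to reduce to generalized permutation matrices, then determine that the lifts preserving $F$ form $(\ZZ/d\ZZ)^{n+2}\rtimes S_{n+2}$ in $\GL(V)$ and quotient by the diagonal $\ZZ/d\ZZ$ of scalars to obtain $(\ZZ/d\ZZ)^{n+1}\rtimes S_{n+2}$. The only cosmetic difference is that the paper first splits off the permutation part to write $\Aut(X)=D\rtimes S_{n+2}$ and then computes $D$, whereas you carry the full generalized permutation through and extract the semidirect product at the end.
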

\begin{proof}
The image in $\PGL(V)$ of every permutation matrix is an automorphism of $X$ by the symmetry of the form $F$. Let now $\varphi\in \Aut(X)$ and let $\tvarphi\in \GL(V)$ be a lifting of $\varphi$ to $\GL(V)$. Since $\spar(F)=2d> 4$, by \cref{corsemiperm}, we have that $\tvarphi=(\tvarphi_{ij})$  is given in the basis $\beta^*$ by a generalized permutation matrix. Then, after multiplying on the right by a permutation matrix we can reduce ourselves to the case where $\tvarphi$ is diagonal. Letting $D$ be the image in $\PGL(V)$ of diagonal matrices that leave $F$ invariant, we obtain that $\Aut(X)= D\rtimes S_{n+2}$. Furthermore, 
$$\varphi(F)=\tvarphi^d_{00}x_0^{d}+\tvarphi^d_{11}x_1^{d}+\tvarphi^d_{22}x_2^{d}+\tvarphi^d_{33}x_3^{d}+ \cdots +\tvarphi^d_{nn}x_{n}^{d}+\tvarphi^d_{n+1,n+1}x_{n+1}^{d}\,.$$
This yields that each $\tvarphi_{ii}$ is a $d$-root of unity, $i\in\{1,\ldots,n+1\}$. Hence, we have an epimorphism $(\mathbf{Z}/d\mathbf{Z})^{n+2}\rightarrow D$ whose kernel is the diagonal $\Delta\simeq \mathbf{Z}/d\mathbf{Z}$. This yields $D\simeq (\mathbf{Z}/d\mathbf{Z})^{n+1}$. 
\end{proof}

\subsection{Delsarte hypersurfaces}

The Delsarte hypersurface $X=V(T)$ of dimension $n$ and degree $d$ is given in the basis $\beta^*$ by the form 
$$T=x_0^{d-1}x_1+x_1^{d-1}x_2+x_2^{d-1}x_3+x_3^{d-1}x_4+ \cdots +x_{n}^{d-1}x_{n+1}+x_{n+1}^{d},$$

\begin{proposition}\label{aut-delsarte}
The automorphism group of the Delsarte hypersurface $X$ of dimension $n\geq 2$ and degree $d\geq 4$, with $(n,d)\neq (2,4)$ is isomorphic to 
$$\Aut(X)= \ZZ/(d-1)^{n+1}\ZZ$$
\end{proposition}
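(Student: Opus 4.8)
The plan is to apply \cref{corsemiperm} to pin down the shape of an arbitrary automorphism and then compute the resulting finite group explicitly. First I would check the hypotheses: the Delsarte form $T=x_0^{d-1}x_1+x_1^{d-1}x_2+\cdots+x_n^{d-1}x_{n+1}+x_{n+1}^d$ is simple in the sense of \cref{simple-hyp} (it is a single Delsarte form $T_{n+2}$ in $n+2$ variables), and $\spar(T)=2(d-1)>4$ precisely when $d\ge 4$, so \cref{corsemiperm} applies and every $\varphi\in\Aut(X)$ is represented by a generalized permutation matrix $\tvarphi=(\tvarphi_{ij})$ in the basis $\beta^*$. In fact the proof of \cref{corsemiperm} gives more: the only nontrivial relation in $(\beta^*,\le_T)$ is $x_n\le_T x_{n+1}$ (since $\frac{\partial T}{\partial x_n}$ involves $\{x_{n-1},x_n,x_{n+1}\}$ which is contained in $\vars(\frac{\partial T}{\partial x_{n+1}})=\{x_n,x_{n+1}\}$ --- here I should double-check the exact inclusion, but the upshot is that the associated permutation must fix each variable), and the argument there shows $\tvarphi$ is actually diagonal.

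So the second step is to assume $\tvarphi=\operatorname{diag}(a_0,a_1,\ldots,a_{n+1})$ and write out the condition $\tvarphi^*(T)=\lambda T$. Comparing coefficients monomial by monomial gives the system
\begin{align*}
a_i^{d-1}a_{i+1}=\lambda \quad (0\le i\le n), \qquad a_{n+1}^d=\lambda .
\end{align*}
Working in $\PGL(V)$ we may normalize $\lambda=1$ (rescaling the lift), so $a_{n+1}^d=1$ and $a_{i+1}=a_i^{-(d-1)}=a_i^{1-d}$ for $i=n,n-1,\ldots,0$ read backwards, i.e. $a_i=a_{i+1}^{(1-d)^{-1}}$; equivalently, going forward, every $a_i$ is determined by $a_0$ via $a_{i}=a_0^{(1-d)^{i}}$ up to the relations, and the closing relation at the end forces $a_0^{(1-d)^{n+1}\cdot(1-d)}=a_0^{(1-d)^{n+2}}$ to be compatible with $a_{n+1}^d=1$. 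The cleanest way to run this: solve from the top down. Set $t=a_0$; then $a_1=t^{1-d}$, $a_2=t^{(1-d)^2}$, \ldots, $a_{n+1}=t^{(1-d)^{n+1}}$, and the last equation $a_{n+1}^d=1$ becomes $t^{d(1-d)^{n+1}}=1$. Since $\gcd(d,1-d)=1$ we have $\gcd(d(1-d)^{n+1},\ (1-d)^{n+2})$ controlling how many of these $t$ actually give automorphisms modulo the scalars $\Delta$ we quotient by; a short computation shows the group of admissible $t$ modulo scalars is cyclic of order $(d-1)^{n+1}$.

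The third step is the bookkeeping to extract the isomorphism type. The diagonal matrices fixing $T$ (with $\lambda=1$) form a group isomorphic to $\mu_{d(1-d)^{n+1}}$ embedded via $t\mapsto\operatorname{diag}(t,t^{1-d},\ldots,t^{(1-d)^{n+1}})$; in $\PGL(V)$ we quotient by the scalar subgroup $\Delta$, which corresponds to $t$ with $t^{1-d}=t,\ldots$, i.e. $t^{d-2}=1$ on each consecutive ratio, forcing $t$ to be a common such root --- after the dust settles the image is cyclic of order $(d-1)^{n+1}$, generated by the automorphism $\sigma=\operatorname{diag}(\zeta,\zeta^{1-d},\ldots,\zeta^{(1-d)^{n+1}})$ with $\zeta$ a primitive $(d-1)^{n+1}$-th root of unity (this is the analogue, for the Delsarte form, of the automorphism $\sigma$ displayed in the Introduction for the Klein form). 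Finally one checks there are no extra permutation contributions --- the permutation part was already shown to be trivial because $(\beta^*,\le_T)$ only links $x_n,x_{n+1}$ in one direction --- so $\Aut(X)=\langle\sigma\rangle\simeq\ZZ/(d-1)^{n+1}\ZZ$.

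The main obstacle I anticipate is the exponent computation: one must show carefully that the group of diagonal solutions modulo scalars is \emph{exactly} cyclic of order $(d-1)^{n+1}$ and not something larger or a nontrivial extension. This is a matter of keeping track of the map $\mu_N\to\PGL(V)$ and its kernel, using $\gcd(d,d-1)=1$ at the key point; I would phrase it as: the solution set of $a_i^{d-1}a_{i+1}=a_{n+1}^d$ (all equal to some common $\lambda$) in $(\C^*)^{n+2}$ modulo the diagonal $\C^*$ is identified, via the substitution above, with $\mu_{(d-1)^{n+1}}$, because the closing constraint together with the normalization kills exactly a factor of $d$ and one copy of $\C^*$. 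The case restrictions $d\ge 4$ (so $\spar(T)>4$) and $(n,d)\ne(2,4)$ (so \cref{matsumura-monsk} applies) must be carried along, and one should note the edge case $n=1$ is excluded from the statement because $\spar(T)=2(d-1)$ still needs $d\ge4$ but for $n=1$ the Delsarte form has only $3$ variables and the argument of \cref{corsemiperm} still goes through --- actually the statement restricts to $n\ge2$, presumably to avoid overlap with the curve case, so no extra work is needed there.
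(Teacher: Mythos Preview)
Your approach is the same as the paper's: apply \cref{corsemiperm} to get a generalized permutation, argue it must be diagonal, then solve the resulting diagonal system. However, the step from ``generalized permutation'' to ``diagonal'' has a genuine gap in your argument.

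You claim that the proof of \cref{corsemiperm} (via the poset $(\beta^*,\le_T)$) already forces $\tvarphi$ to be diagonal. It does not. The poset relations only constrain the \emph{off-diagonal} entries within a fixed differential-rank class (cf.\ \cref{rmk:poset}); the \emph{permutation} part of a generalized permutation matrix is constrained instead by the condition that it send $x_i$ to some $x_j$ with $\drank(\partial T/\partial x_i)=\drank(\partial T/\partial x_j)$. For the Delsarte form these ranks are $2,3,3,\ldots,3,2$, so a priori the permutation could swap $x_0$ and $x_{n+1}$ and permute $x_1,\ldots,x_n$ among themselves. (Incidentally, your poset computation is also off: you have the inclusion reversed, so the relation is $x_{n+1}\le_T x_n$, and you omit the analogous relation $x_0\le_T x_1$.) The paper closes this gap by a direct inspection: $x_{n+1}$ is the unique variable appearing to the $d$th power, so any generalized permutation preserving $T$ must fix $x_{n+1}$, and then the chain structure $x_i^{d-1}x_{i+1}$ forces every other variable to be fixed as well. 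This is easy but must be said.

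Two minor corrections in the diagonal computation: the scalar kernel consists of those $t$ with $t=t^{1-d}$, i.e.\ $t^d=1$ (not $t^{d-2}=1$), and the paper packages the count more cleanly as an epimorphism $\ZZ/d(d-1)^{n+1}\ZZ\to\Aut(X)$ (parametrized by $\tvarphi_{00}$, which satisfies $\tvarphi_{00}^{d(1-d)^{n+1}}=1$) with kernel the $d$th roots of unity.
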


\begin{proof}
Let $\varphi\in \Aut(X)$ and let $\tvarphi\in \GL(V)$ be a lifting of $\varphi$ to $\GL(V)$. Since $\spar(F)=2d-2>4$, by \cref{corsemiperm}, we have that $\tvarphi=(\tvarphi_{ij})$  is given in the basis $\beta^*$ by a generalized permutation matrix. By the distinguished role of the variable $x_{n+1}$, a direct inspection of the form $T$ yields that $\tvarphi$ must be a diagonal matrix.
Applying $\varphi$ to $F$ we obtain the equations
$$\tvarphi_{i,i}^{d-1}\tvarphi_{i+1,i+1}= \tvarphi_{n+1,n+1}^d\quad \forall i\in\{0,\ldots,n\}\,.$$
Up to changing $\varphi$ by a multiple, we can assume $\tvarphi_{n+1,n+1}^d=1$. It follows that $\varphi$ is determined by the value of $\tvarphi_{00}$, which in turn satisfies $\tvarphi_{00}^{d(1-d)^{n+1}}=1$. Hence, we have an epimorphism $\ZZ/d(d-1)^{n+1}\ZZ\rightarrow\Aut(X)$ whose kernel is the $d$-roots of unity, proving the proposition.
\end{proof}

\subsection{Klein hypersurfaces}

The Klein hypersurface $X=V(K)$ of dimension $n$ and degree $d$ is given in the basis $\beta^*$ by the form 
$$K=x_0^{d-1}x_1+x_1^{d-1}x_2+x_2^{d-1}x_3+x_3^{d-1}x_4+ \cdots +x_{n}^{d-1}x_{n+1}+x_{n+1}^{d-1}x_0,$$

\begin{proposition}
\label{theoautklein1}
The automorphism group of the Klein hypersurface $X$ of dimension $n\geq 2$ and degree $d\geq 4$, with $(n,d)\neq (2,4)$ is isomorphic to 
$$\Aut(X)= (\ZZ/m\ZZ)\rtimes \ZZ/(n+2)\ZZ$$
where $m=\frac{(d-1)^{n+2}-(-1)^{n+2}}{d}.$
\end{proposition}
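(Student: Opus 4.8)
The plan is to apply \cref{corsemiperm}. First I would verify the hypotheses: the Klein form $K$ in $n+2$ variables has $\spar(K) = 2d-2 > 4$ (since $d \geq 4$), and $(\beta^*,\le_K)$ is the trivial poset, because for each $i$ one has $\vars(\partial K/\partial x_i) = \{x_{i-1},x_i,x_{i+1}\}$ (indices cyclic), and no such three-element set is contained in another, so $x_i \le_K x_j$ forces $i=j$. Hence by \cref{corsemiperm} every $\varphi \in \Aut(X)$ lifts to some $\tvarphi \in \GL(V)$ that is a generalized permutation matrix in the basis $\beta^*$; write $\tvarphi = D \cdot P$ with $D = \diag(\lambda_0,\ldots,\lambda_{n+1})$ and $P$ a permutation matrix associated to a permutation $\rho$.

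Next I would determine which permutations $\rho$ can occur. Applying $\tvarphi^*$ to $K$ sends the monomial $x_i^{d-1}x_{i+1}$ to a scalar times $x_{\rho(i)}^{d-1}x_{\rho(i+1)}$, so the condition $\tvarphi^*(K) = \mu K$ forces the set of ordered ``edges'' $\{(i,i+1) \bmod (n+2)\}$ to be preserved by $\rho$. This means $\rho$ is an automorphism of the directed $(n+2)$-cycle, hence a power of the cyclic shift $i \mapsto i+1$; thus $P$ lies in the cyclic group $\langle \nu \rangle$ of order $n+2$ generated by $\nu$. After composing $\varphi$ with a suitable power of $\nu^{-1}$, I may assume $P = \Id$, i.e.\ $\tvarphi = D$ is diagonal. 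The equation $\tvarphi^*(K) = \mu K$ then becomes the system $\lambda_i^{d-1}\lambda_{i+1} = \mu$ for all $i \in \ZZ/(n+2)\ZZ$. Normalizing (replacing $\tvarphi$ by a scalar multiple) we may take $\mu = 1$, so $\lambda_{i+1} = \lambda_i^{1-d}$, whence $\lambda_i = \lambda_0^{(1-d)^i}$ and going once around the cycle gives the single constraint $\lambda_0^{(1-d)^{n+2}} = \lambda_0$, i.e.\ $\lambda_0^{(1-d)^{n+2}-1} = 1$. Since $(1-d)^{n+2} - 1 = ((-1)^{n+2}(d-1)^{n+2}) - 1$, this exponent equals $dm$ when $n$ is even and $-dm - 2 = \cdots$; more cleanly, writing it out, the number of solutions $\lambda_0$ is $|(1-d)^{n+2}-1| = (d-1)^{n+2} - (-1)^{n+2} = dm$. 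So the group of diagonal automorphisms modulo scalars is $(\ZZ/dm\ZZ)/(\ZZ/d\ZZ) \simeq \ZZ/m\ZZ$, generated by the image of $\sigma$.

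Putting this together: $\Aut(X)$ is generated by the image of the diagonal subgroup $\langle \sigma \rangle \simeq \ZZ/m\ZZ$ and by $\nu$ of order $n+2$. These satisfy $\nu \sigma \nu^{-1} = \sigma^{1-d}$ (a direct check on the diagonal entries, using that conjugating by the shift permutes the $\lambda_i$), and since $\gcd(1-d, m)$ divides $\gcd((1-d)^{n+2}-1, (1-d)) = 1$ up to sign, $1-d$ is a unit mod $m$, so $\langle \sigma \rangle$ is normal and $\langle \sigma \rangle \cap \langle \nu \rangle = \{1\}$ (an element fixing $X$ that is both diagonal and a nontrivial cyclic shift is impossible, as a shift has no eigenbasis aligned with $\beta^*$ unless trivial). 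Therefore $\Aut(X) = (\ZZ/m\ZZ) \rtimes \ZZ/(n+2)\ZZ = \mathcal{K}(n,d)$.

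The main obstacle is the bookkeeping in the second paragraph: correctly pinning down that $\rho$ must be a cyclic rotation (not merely a graph automorphism that could in principle reverse orientation — but the \emph{directed} edge structure $x_i^{d-1}x_{i+1}$, with distinct exponents $d-1 \neq 1$, rules reversal out), and then carefully counting the solutions of $\lambda_0^{(1-d)^{n+2}-1}=1$ modulo the scalar $\ZZ/d\ZZ$, checking that the quotient is exactly $\ZZ/m\ZZ$ rather than something with extra torsion. One must also confirm that no two distinct elements of $\langle\sigma\rangle\rtimes\langle\nu\rangle$ give the same element of $\PGL(V)$, which follows from the semidirect product structure together with the faithfulness of the $\mathcal{K}(n,d)$-action on $X$ noted in the introduction.
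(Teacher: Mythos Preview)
Your proposal is correct and follows essentially the same approach as the paper: apply \cref{corsemiperm} using $\spar(K)=2d-2>4$, identify the permutation part as the cyclic group $\langle\nu\rangle$ of order $n+2$, and then solve the diagonal equations $\lambda_i^{d-1}\lambda_{i+1}=\mu$ to obtain $D\simeq\ZZ/m\ZZ$ via the epimorphism $\ZZ/dm\ZZ\to D$ with kernel the $d$-th roots of unity. You supply somewhat more detail than the paper does---explicitly arguing that the directed-edge structure of $K$ forces $\rho$ to be a rotation (not a reflection) of the $(n+2)$-cycle, and verifying the semidirect product relation $\nu\sigma\nu^{-1}=\sigma^{1-d}$ and the trivial intersection---whereas the paper simply asserts the shape of the admissible generalized permutation matrices ``by the structure of the form $K$'' and cites \cite[Lemma~3.1]{Zhe22} for the diagonal computation.
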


\begin{proof}
Let $\varphi\in \Aut(X)$ and let $\tvarphi\in \GL(V)$ be a lifting of $\varphi$ to $\GL(V)$. Since $\spar(K)=2d-2>4$, by \cref{corsemiperm}, we have that $\tvarphi=(\tvarphi_{ij})$  is given in the basis $\beta^*$ by a generalized permutation matrix.
By the structure of the form $K$,
the only generalized permutation matrices inducing automorphisms of $X$ have the shape of a multiple of
$$
T=\begin{pmatrix}
0 & * & 0 &\cdots & 0 \\  
0 & 0 & * &\cdots & 0 \\  
\vdots & \vdots & \vdots & \ddots & \vdots\\
0 & 0 & 0 &\cdots & * \\
* & 0 & 0 &\cdots & 0 
\end{pmatrix}
$$
In particular, taking all coefficients to be $1$ we obtain an automorphism inducing a cyclic permutation of the variables. Such element generates a cyclic subgroup $P<\Aut(X)$ of order $n+2$. Furthermore, composing any  $\varphi$ with an element of $P$ we obtain a diagonal matrix.

The diagonal subgroup $D<\Aut(X)$ was determined in \cite[Lemma~3.1]{Zhe22}. For the reader's convenience we will reproduce the argument here. Assume that $\varphi$ is diagonal, and note that, up to changing the representative in $\GL(V)$, we can assume that $\tvarphi_{0,0}^{d-1}\tvarphi_{1,1}=1$. Replacing in Klein's equation we get that $\tvarphi_{i,i}^{d-1}\tvarphi_{i+1,i+1}=1$ for all $i=0,\ldots,n+1$. Hence
$$
\tvarphi_{0,0}=\tvarphi_{n+1,n+1}^{1-d}=\tvarphi_{n,n}^{(1-d)^2}=\cdots=\tvarphi_{1,1}^{(1-d)^{n+1}}=\tvarphi_{0,0}^{(1-d)^{n+2}}\,.
$$
Thus $\varphi$ is determined by the value of $\tvarphi_{0,0}$ which is a $dm$-root of unity. Hence, we have an epimorphism $\mathbf{Z}/dm\mathbf{Z}\rightarrow D$ whose kernel is the $d$-roots of unity. Therefore $D$ is a cyclic group of order $m$, proving the proposition.
\end{proof}

\begin{remark}
\label{remarkautdiag}
According to the above proof, the generator of the of the action $D=\ZZ/m\ZZ$ on $X$ is
$$
\sigma(x_0:x_1:\cdots:x_{n+1})=(\zeta_{dm} x_0:\zeta_{dm}^{1-d}x_1:\cdots:\zeta_{dm}^{(1-d)^{n+1}}x_{n+1}).
$$
Nevertheless, the map $r\in\ZZ/m\ZZ\mapsto r\cdot d\in (\ZZ/dm\ZZ)/\langle m\rangle$ is an isomorphism if and only if $\gcd(d,m)=1$. Hence in this case the action of a generator of $\ZZ/m\ZZ$ on the Klein variety $X$ is given by
$$
\sigma(x_0:x_1:\cdots:x_{n+1})=(\zeta_m x_0:\zeta_m^{1-d}x_1:\cdots:\zeta_m^{(1-d)^{n+1}}x_{n+1}).
$$
\end{remark}

The three propositions above settle the problem of finding the automorphism groups of Fermat, Klein and Delsarte hypersurfaces, in all degrees greater or equal than 4. In the remaining of this section, we will concentrate on cubic Klein hypersurfaces. In this case, the automorphism groups of the Klein cubic curve \cite[Theorem III.10.1]{Sil86}, the Klein cubic surface \cite[Theorem 9.5.8]{Dol12} and the Klein cubic threefold \cite{Adl78} are known. Using the differential method we can still treat the case of the Klein cubic hypersurface of dimension $n\ge 4$ with a detailed case by case analysis.

\begin{theorem}
\label{theoautklein2}
The automorphism group of the Klein cubic hypersurface $X$ of dimension $n\geq 4$ is isomorphic to 
$$\Aut(X)= (\ZZ/m\ZZ)\rtimes \ZZ/(n+2)\ZZ$$
where $m=\frac{2^{n+2}-(-1)^{n+2}}{3}.$    
\end{theorem}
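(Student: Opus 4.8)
The strategy is to mimic the proof of \cref{theoautklein1}, but since the degree is now $d=3$ we have $\spar(K) = 2d-2 = 4$, so \cref{corsemiperm} does not apply directly and we cannot immediately conclude that automorphisms are generalized permutations. Instead, I would go back to the differential method itself (\cref{diff-method}) and extract, by a careful case-by-case analysis, enough constraints on the matrix $\tvarphi = (\tvarphi_{ij})$ representing an arbitrary $\varphi \in \Aut(X)$ to show that $\tvarphi$ must still be a generalized permutation matrix; once that is established, the argument of \cref{theoautklein1} (the diagonal part is computed in \cite[Lemma~3.1]{Zhe22} and the cyclic permutation $\nu$ generates a complement) finishes the proof verbatim, giving $\Aut(X) = (\ZZ/m\ZZ) \rtimes \ZZ/(n+2)\ZZ$ with $m = \frac{2^{n+2}-(-1)^{n+2}}{3}$.

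The concrete input is as follows. For the cubic Klein form $K = x_0^2 x_1 + x_1^2 x_2 + \cdots + x_n^2 x_{n+1} + x_{n+1}^2 x_0$ one has $\frac{\partial K}{\partial x_i} = x_{i-1}^2 + 2 x_i x_{i+1}$ (indices mod $n+2$), so $\vars\left(\frac{\partial K}{\partial x_i}\right) = \{x_{i-1}, x_i, x_{i+1}\}$ has cardinality $3$ for every $i$, hence $(\beta^*, \le_K)$ is the trivial poset and every $\frac{\partial K}{\partial x_i}$ has differential rank $3$ by \cref{lem:vars}. By \cref{diff-method}, for every $i$ the form $\frac{\partial K}{\partial(\tvarphi^* x_i)} = \sum_j \tvarphi_{ij}\frac{\partial K}{\partial x_j}$ must also have differential rank $3$. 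The point is that $\frac{\partial K}{\partial x_j} = x_{j-1}^2 + 2 x_j x_{j+1}$, so a linear combination $\sum_{j \in S}\tvarphi_{ij}(x_{j-1}^2 + 2 x_j x_{j+1})$ over a support set $S$ of size $\ge 2$ involves the variables $\bigcup_{j \in S}\{x_{j-1}, x_j, x_{j+1}\}$, and one must analyze when such a combination nonetheless has differential rank only $3$ (equivalently, when it lies in $S^2(W)$ for a $3$-dimensional $W$ despite $\spar$ possibly being $\le 2$). I would show that this forces $\#S = 1$: if $S$ has two indices $j, j'$ they would have to be "consecutive" in the cyclic order for the variable sets to overlap enough, but then the square term $x_{j-1}^2$ (or $x_{j'-1}^2$) introduces a variable not cancelled by anything, pushing $\drank$ above $3$ — unless $n+2$ is so small that the cycle wraps around, which is excluded by $n \ge 4$. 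This is the step I expect to be the main obstacle: it requires a genuinely finite but slightly delicate case analysis of small supports (sizes $2$, $3$, possibly $4$) inside the cyclic structure of the $\partial K/\partial x_i$, keeping track of which squares and which mixed terms can coincide, and using smoothness of $X$ (equivalently the invertibility of $\tvarphi$) to rule out degenerate configurations.

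Once each row of $\tvarphi^*$ has support of size one, $\tvarphi$ is a generalized permutation matrix, and the remainder is routine: the combinatorics of the monomials of $K$ forces the underlying permutation to be a power of the cyclic shift (as in the matrix $T$ displayed in the proof of \cref{theoautklein1}), after composing with a suitable power of $\nu$ we reduce to the diagonal case, and the computation $\tvarphi_{0,0} = \tvarphi_{0,0}^{(1-d)^{n+2}} = \tvarphi_{0,0}^{(-2)^{n+2}}$ shows $\tvarphi_{0,0}$ is a $dm = 3m$-th root of unity, giving $D \cong \ZZ/m\ZZ$ and hence $\Aut(X) = D \rtimes P \cong (\ZZ/m\ZZ)\rtimes \ZZ/(n+2)\ZZ$. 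One should double-check that $P$ really is a complement, i.e.\ that the only diagonal automorphism induced by a power of the shift is trivial, which again follows from $\gcd$ considerations; I would note in passing that \cref{remarkautdiag} already records the explicit generator $\sigma$ of $D$, so no new computation is needed there.
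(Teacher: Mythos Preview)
Your strategy is exactly the paper's: prove that $\drank(\partial K/\partial x)=3$ forces $x$ to be a scalar multiple of a coordinate vector, conclude via \cref{diff-method} that every automorphism is a generalized permutation, and then finish verbatim as in \cref{theoautklein1}. The paper organizes the promised case analysis according to the cyclic configuration of the support of $x=\sum c_i x_i$: (1) some pair $c_{i-1},c_{i+1}$ at distance $2$ is non-zero; (2) some pair at distance $\ge 3$ is non-zero with zeros in between; (3) the support is exactly an adjacent pair $\{i,i+1\}$. In each case an explicit $4\times 4$ minor of rank $4$ is exhibited.

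One point to sharpen in your sketch: the heuristic via $\vars$ (``the square term $x_{j-1}^2$ introduces a variable not cancelled by anything, pushing $\drank$ above $3$'') is not a proof as stated. Once two of the $\partial K/\partial x_j$ with overlapping variable sets are combined, the sparsity of the sum drops to $2$, so \cref{lem:vars} no longer applies and $\#\vars$ gives only an \emph{upper} bound on $\drank$, not the lower bound you need. The concrete device the paper uses instead---and which you do not mention---is that since $K$ is cubic, $\partial K/\partial x$ is quadratic, so $D_{\partial K/\partial x}$ is an honest linear endomorphism of $V^*$, represented by a matrix $\Delta$ (essentially the Hessian of $K$ contracted with $x$). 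Then $\drank(\partial K/\partial x)=\rank(\Delta)$, and the case analysis reduces to writing down $\Delta$ and finding full-rank $4\times 4$ submatrices, which is entirely mechanical.
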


\begin{proof}
In order to conclude with the same argument as in \cref{theoautklein1} it is enough to show that all automorphisms are generalized permutations. For this we have to find a finer argument than \cref{corpseudoperm} since in this case $\spar(K)=4$. 

Let $\varphi\in \Aut(X)$ and let $\tvarphi\in \GL(V)$ be a lifting of $\varphi$ to $\GL(V)$. Let also $x=\sum_{i=0}^{n+1}c_i\cdot x_i$ be a vector in $V^*$ represented in the basis $\beta^*$. We claim that 
$$
\drank \left(\frac{\partial K}{\partial x}\right)=3 \ \ \Longleftrightarrow \ \ x=c_i\cdot x_i \ \mbox{ for some }i\in \{0,\ldots, n+1\}\,.
$$
We remark that the proposition follows directly  from the claim since by \cref{diff-method} we have that $\tvarphi^*(x_i)=\tvarphi_{ij}x_j$ and so $\Aut(X)<\PGP(V,\beta)$ as desired.

To simplify the notation, in the proof of the claim we  use cyclic convention for sub-indices: $n+2=0$, $n+3=1$, $-1=n$, $-2=n-1$ and so on. We will now compute the differential rank of $\frac{\partial K}{\partial x}$. Recall that by \cref{diff-rank} the $\drank \left(\frac{\partial K}{\partial x}\right)$  is the rank of the linear map
$$\Delta:=D_{\frac{\partial K}{\partial x}}: V^*\to S(V^*)\,.$$
Nevertheless, in this case, since $K$ is a cubic form, we have that the image of $\Delta$ is contained in $V^*$. Hence, $\Delta$ is represented by a matrix in the basis $\beta^*$ that we also denote by $\Delta$ and so 
$$\drank \left(\frac{\partial K}{\partial x}\right)=\rank(\Delta)\,.$$ 

In order to prove the claim let us separate into three cases:

\begin{enumerate}
    \item \emph{There exists some sub-index $i$ such that $c_{i-1}\cdot c_{i+1}\neq 0$:} In this case since
    $$
    \frac{d(\frac{\partial K}{\partial x})}{dx_j}=\frac{\partial }{\partial x}(x_{j-1}^2+2x_jx_{j+1})=2(c_{j-1}\cdot x_{j-1}+c_{j+1}\cdot x_j+c_j\cdot x_{j+1}),
    $$
    the $4\times 6$ minor of $\Delta$ corresponding to the partial derivatives with respect to $x_{i-2}$, $x_i$, $x_{i+1}$ and $x_{i+2}$ with columns corresponding to the linear monomials $x_{i-2}$, $x_{i-1}$, $x_i$, $x_{i+1}$, $x_{i+2}$ and $x_{i+3}$ is of the form
    $$
    M=\begin{pmatrix}
    c_{i-1} & c_{i-2} & 0 & 0 & 0 & * \\
    0 & c_{i-1} & c_{i+1} & c_i & 0 & 0 \\
    0 & 0 & c_i & c_{i+2} & c_{i+1} & 0\\
    0 & 0 & 0 & c_{i+1} & c_{i+3} & c_{i+2}
    \end{pmatrix}.
    $$
    If $c_i\neq 0$ then $\rank(M)=4$. If $c_i=0$ and $\det\begin{pmatrix}
    c_{i+2} & c_{i+1} \\ c_{i+1} & c_{i+3}    
    \end{pmatrix}\neq 0$ then we also have $\rank(M)=4$. If $c_i=0$ and $c_{i+2}c_{i+3}=c_{i+1}^2\neq 0$ then $c_{i+2}\neq 0$ and so
    $$
    \rank(M)=\rank \begin{pmatrix}
    c_{i-1} & c_{i-2} & 0 & 0 & 0 & * \\
    0 & c_{i-1} & c_{i+1} & 0 & 0 & 0 \\
    0 & 0 & 0 & c_{i+2} & c_{i+1} & 0\\
    0 & 0 & 0 & 0 & 0 & c_{i+2}
    \end{pmatrix}=4. 
    $$
    We obtain in this case $\rank(\Delta)>3$.

    \medskip
    
    \item \emph{There exists $i$ and $3\le \ell\le \frac{n}{2}+1$ such that $c_i\cdot c_{i+\ell}\neq 0$ and $c_{i+1}=c_{i+2}=\cdots=c_{i+\ell-1}=0$:} In this case the $4\times 4$ minor of $\Delta$ corresponding to the partial derivatives with respect to $x_{i-1}$, $x_{i+1}$, $x_{i+\ell-1}$ and $x_{i+\ell+1}$ with columns corresponding to the linear monomials $x_{i-1}$, $x_i$, $x_{i+\ell-1}$ and $x_{i+\ell}$ is of the form
    $$
    M=\begin{pmatrix}
    c_{i} & c_{i-1} & 0 & 0 \\
    0 & c_{i} & 0 & 0 \\
    0 & 0 & c_{i+\ell} & 0\\
    0 & 0 & 0 & c_{i+\ell}
    \end{pmatrix}.
    $$
    Thus $\rank(\Delta)>3$.

    \medskip
    
    \item \emph{There are some $c_i\cdot c_{i+1}\neq0$ and $c_j=0$ for all $j\neq i,i+1$:} In this case take the $4\times 4$ minor of $\Delta$ corresponding to the partial derivatives with respect to $x_{i-1}$, $x_i$, $x_{i+1}$ and $x_{i+2}$ with columns corresponding to the linear monomials $x_{i-1}$, $x_i$, $x_{i+1}$ and $x_{i+2}$
    $$
    M=\begin{pmatrix}
    c_{i} & 0 & 0 & 0 \\
    0 & c_{i+1} & c_i & 0 \\
    0 & c_i & 0 & c_{i+1} \\
    0 & 0 & c_{i+1} & 0 
    \end{pmatrix}. 
    $$
    Thus in this last case we also obtain $\rank(\Delta)>3$.\qedhere
\end{enumerate}
\end{proof}

\begin{remark}
A similar argument as above should work to show that automorphisms of Delsarte cubics (and more generally any simple cubic hypersurface) are generalized permutations when the dimension is big enough.
It is worthy to remark that in the proof above we used $n\ge 4$ in case (1). This is not true anymore for $n\le 3$, in fact the automorphism group of the Klein cubic threefold is $\operatorname{PSL}_2(\mathbf{F}_{11})$ \cite{Adl78} and for the Klein cubic surface it is $\mathfrak{S}_5$ (see e.g. \cite[Theorem 9.5.8]{Dol12}).   
\end{remark}

\begin{remark} Let us denote by $X=\{x_0^2x_1+\cdots+x_5^2x_0=0\}\subseteq \mathbf{P}^5$ the Klein cubic fourfold, and by $G:=\Aut(\h^4(X,\ZZ),\theta^2)$ the automorphism group of polarized primitive Hodge structure preserving $\theta^2$, where $\theta$ is the K\"ahler $(1,1)$-form of $X$. By Voisin's Torelli Theorem \cite{voisin1986theoreme} we know that $\Aut(X)\simeq G$ and by \cref{theoautklein2} $$ \Aut(X)=\langle \sigma\rangle \rtimes \langle \nu\rangle\simeq(\ZZ/21\ZZ)\rtimes \ZZ/6\ZZ  $$ where  $$ \sigma=\operatorname{diag}(\zeta_{63},\zeta_{63}^{-2},\zeta_{63}^4,\zeta_{63}^{-8},\zeta_{63}^{16},\zeta_{63}^{-32}) $$ is the diagonal automorphism of order 21, and  $$ \nu(x_0:x_1:\cdots:x_5)=(x_1:x_2:\cdots:x_5:x_0) $$ is the cyclic permutation of order 6. Recall that for cubic fourfolds we can decompose $G$ as an extension of the normal subgroup $G_s$ of symplectic automorphism by the non-symplectic ones $\overline{G}$ $$ 1\rightarrow G_s\rightarrow G\rightarrow \overline{G}\rightarrow 1. $$ Moreover, $\overline{G}$ is a cyclic group (see e.g. \cite[\S 6]{LZ22}). One can easily see that  $$ \langle \sigma\rangle\simeq \langle \sigma^3\rangle \times \langle \sigma^7\rangle $$ where $\sigma^3$ is a symplectic order 7 automorphism, and $\sigma^7=\operatorname{diag}(1,\omega,\omega^2,1,\omega,\omega^2)$ is the non-liftable (in the sense of \cite[Definition 4.4]{OY19}) non-symplectic automorphism of order 3. Similarly we have $$ \langle \nu\rangle\simeq \langle \nu^2\rangle\times\langle\nu^3\rangle $$ where $\nu^2$ is a symplectic automorphism of order 3, and $\nu^3$ is a non-symplectic involution. Hence  $$ G_s=\langle \sigma^3\rangle\rtimes\langle\nu^2\rangle\simeq(\ZZ/7\ZZ)\rtimes \ZZ/3\ZZ $$ and  $$ \overline{G}=\langle \sigma^7\rangle\times \langle\nu^3\rangle\simeq \ZZ/6\ZZ. $$ 

It is worth mentioning that these facts fit into the Hodge theoretic classification of symplectic automorphism groups of cubic fourfolds by Laza and Zheng \cite[Theorem 1.2]{LZ22}. 
\end{remark}

\section{Automorphisms of Hodge structures}
\label{sec4}

In the following sections we will study the induced action of the automorphism group of smooth projective hypersurfaces on its corresponding polarized Hodge structure. We refer the reader to the standard reference \cite{Voi02} for a detailed account on Hodge Theory.

Let $H$ be an integral polarized Hodge structure of weight $k$. Taking into account the Hodge-Riemann relations, one can embed $\Aut(H)$ as a discrete subgroup of a compact real Lie group 
$$\Aut(H)\hookrightarrow\prod_{p=0}^\frac{k-1}{2}O(2h^{p,k-p})\times O(h^{\frac{k}{2},\frac{k}{2}}),$$
and hence the group $\Aut(H)$ is finite.

The classical Torelli Theorem \cite{And58, arbarello2013geometry} states that for any pair of genus $g$ curves $X$ and $X'$, every isomorphism between their polarized Jacobians $J(X)\simeq J(X')$ is induced by a unique isomorphism of curves $X\simeq X'$ when $X$ is hyperelliptic. For non-hyperelliptic curves, given any isomorphism between polarized Jacobians $\varphi:J(X)\rightarrow J(X')$ there exists a unique isomorphism of curves $f:X\rightarrow X'$ such that $J(f)=\pm\varphi$. Since the Jacobian of a curve $X$ is determined by its primitive Hodge structure $J(X)=(\h^{1,0}(X))^*/\h_1(X,\ZZ)$, morphisms between polarized Jacobians correspond to morphisms between polarized Hodge structures. This motivates the following related conjectures, which are commonly referred as the Torelli Principle for hypersurfaces: Let $X$ and $X'$ be two smooth degree $d$ hypersurfaces of $\PP^{n+1}$
\begin{itemize}
    \item[(A)] Global Torelli Principle: An isomorphism of polarized Hodge structures $$\h^n(X,\ZZ)_{\prim}\simeq \h^n(X',\ZZ)_{\prim}$$ 
    which can be lifted to an isometry $\h^n(X,\ZZ)\simeq \h^n(X',\ZZ)$, preserving the corresponding power of the polarization $\theta_X^\frac{n}{2}\mapsto \theta_{X'}^\frac{n}{2}$ in the case $n$ even, implies $X\simeq X'$.
    \item[(B)] Strong Torelli Principle: If $X$ has an involution inducing $-\operatorname{Id}$ on $\h^n(X,\ZZ)_{\prim}$, every isomorphism of polarized Hodge structures
    $$\h^n(X,\ZZ)_{\prim}\simeq \h^n(X',\ZZ)_{\prim}$$
    which is induced by an isometry $\h^n(X,\ZZ)\simeq \h^n(X',\ZZ)$, preserving polarizations $\theta_X^\frac{n}{2}\mapsto\theta_{X'}^\frac{n}{2}$ in the case $n$ even, is induced by a unique isomorphism $X\simeq X'$. If $X$ has no involution of this type, every such isomorphism
    $$\varphi:\h^n(X,\ZZ)_{\prim}\rightarrow \h^n(X',\ZZ)_{\prim}$$
    induces a unique isomorphism $f:X'\rightarrow X$ such that $f^*=\pm\varphi$.
\end{itemize}
A consequence of the Strong Torelli Principle (also called the ``precise form'' of the Torelli Theorem by Serre's Appendix to \cite{Ser01}) is the following, which we call Punctual Torelli Principle.
\begin{itemize}
    \item[(C)] Punctual Torelli Principle: Let $X$ be a smooth degree $d$ hypersurface of $\PP^{n+1}$. If $X$ has an involution inducing $-\operatorname{Id}$ on $\h^n(X,\ZZ)_{\prim}$ then $$\Aut(X)\simeq \Aut(\h^n(X,\ZZ)_{\prim}),$$
    where the latter is the group of automorphisms of polarized Hodge structure induced by isometries of $\h^n(X,\ZZ)$, which preserve $\theta^\frac{n}{2}$ in the case $n$ even. If $X$ has no such involutions then 
    $$\Aut(X)\simeq \Aut(\h^n(X,\ZZ)_{\prim})/\{\pm 1\}.$$
\end{itemize}
Since we focus exclusively on the Punctual Torelli Principle, we will devote this section to the study of  automorphisms of polarized Hodge structures. Later in \S \ref{sec5} we will apply these general results to the case of suitable Klein hypersurfaces.

The first step towards the study of automorphisms of polarized Hodge structures is the following extension of \cite[Proposition 13.2.5]{BL04}, originally stated for principally polarized abelian varieties.

\begin{proposition}
\label{propHS}
Let $H$ be a polarized Hodge structure and $\sigma\in \Aut(H)$ be an automorphism of order $m$. Let $H_0\subseteq H$ be a $\sigma$-invariant subspace of dimension $h_0$. Consider the subspace
$$
\Fix\langle\sigma\rangle:=\langle\{v\in H_0: \sigma^i(v)=v \text{ for some }i=1,\ldots,m-1\}\rangle
$$
of dimension $h_1$. Then
$$
\varphi(m)\mid h_0-h_1.
$$
\end{proposition}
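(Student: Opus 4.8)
The plan is to reduce the statement to elementary representation theory of the cyclic group $\langle\sigma\rangle$ over $\mathbf{Q}$, decomposing $H_0$ into cyclotomic isotypic pieces exactly as in the classical argument for principally polarized abelian varieties \cite[Proposition~13.2.5]{BL04}; the polarization and the Hodge structure themselves will play no role. Since $\sigma\in\Aut(H)$ acts $\mathbf{Z}$-linearly with $\sigma^m=\operatorname{Id}$, and since $h_0$ and $h_1$ are unchanged if we replace $H_0$ by its $\mathbf{Q}$-span in $H\otimes\mathbf{Q}$ and $\Fix\langle\sigma\rangle$ by its $\mathbf{Q}$-span, I would first reduce to the case where $H_0$ is a finite-dimensional $\mathbf{Q}$-vector space on which $\sigma$ acts $\mathbf{Q}$-linearly with $\sigma^m=\operatorname{Id}$. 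Then $H_0$ is a module over $\mathbf{Q}[x]/(x^m-1)\cong\prod_{d\mid m}\mathbf{Q}[x]/(\Phi_d(x))$, where $\Phi_d$ denotes the $d$-th cyclotomic polynomial and $x$ acts as $\sigma$, giving a canonical decomposition $H_0=\bigoplus_{d\mid m}V_d$ with $V_d:=\ker\Phi_d(\sigma)$. Each $V_d$ is naturally a vector space over the field $K_d:=\mathbf{Q}[x]/(\Phi_d(x))\cong\mathbf{Q}(\zeta_d)$, on which $\sigma$ acts as multiplication by the primitive root $\zeta_d$; hence $\dim_{\mathbf{Q}}V_d=\varphi(d)\cdot\dim_{K_d}V_d$ is divisible by $\varphi(d)$.

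The second step is to identify $\Fix\langle\sigma\rangle$ with $\bigoplus_{d\mid m,\ d<m}V_d$. For $1\le i\le m-1$ the operator $\sigma^i-\operatorname{Id}$ acts on $V_d$ as multiplication by $\zeta_d^i-1\in K_d$, which is invertible when $d\nmid i$ and identically zero when $d\mid i$; therefore $\ker(\sigma^i-\operatorname{Id})=\bigoplus_{d\mid\gcd(i,m)}V_d$. Since $\gcd(i,m)\le i<m$, each such kernel is contained in $\bigoplus_{d\mid m,\ d<m}V_d$; conversely, for each proper divisor $d$ of $m$ the choice $i=d$ shows $V_d\subseteq\ker(\sigma^d-\operatorname{Id})$. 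Summing over all admissible $i$ yields $\Fix\langle\sigma\rangle=\bigoplus_{d\mid m,\ d<m}V_d$, so that $h_1=\sum_{d\mid m,\ d<m}\dim_{\mathbf{Q}}V_d$.

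Combining the two steps gives $h_0-h_1=\dim_{\mathbf{Q}}V_m$, which by the first step is a multiple of $\varphi(m)$; this is the claim. I do not anticipate a real obstacle here — the argument is essentially bookkeeping with cyclotomic factors — and the only point requiring a little care is ensuring that the $\Phi_m$-isotypic piece $V_m$ genuinely carries a $\mathbf{Q}(\zeta_m)$-structure, which is precisely what the $\mathbf{Q}$-rationality (a fortiori $\mathbf{Z}$-integrality) of the representation $\sigma$ provides. If one only knew $H_0$ to be defined over $\mathbf{C}$, one would instead use the $\sigma$-eigenspace decomposition over $\mathbf{C}$ together with the fact that the eigenspaces attached to the $\varphi(m)$ primitive $m$-th roots of unity are permuted transitively by $\Gal(\mathbf{Q}(\zeta_m)/\mathbf{Q})$, hence all have equal dimension, and conclude in the same way.
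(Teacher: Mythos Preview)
Your proof is correct and follows essentially the same approach as the paper: both arguments rest on the fact that, over $\mathbf{Q}$, the only eigenvalues of $\sigma$ surviving on $H_0/\Fix\langle\sigma\rangle$ are primitive $m$-th roots of unity, so the characteristic polynomial there is a power of $\Phi_m$ and its degree is divisible by $\varphi(m)$. The paper phrases this via the quotient and its characteristic polynomial directly, while you equivalently use the isotypic decomposition $H_0=\bigoplus_{d\mid m}V_d$ and identify $V_m$ as the complement of $\Fix\langle\sigma\rangle$; these are two sides of the same coin.
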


\begin{proof}
Since $\sigma$ induces an automorphism of $H':=H_0/ \Fix \langle\sigma\rangle$ such that $\sigma_\C^i$ has no fixed points for all $i=1,\ldots,m-1$, then all eigenvalues of $\sigma|_{H'}$ are $m$-th primitive roots of unity and so the characteristic polynomial of the rational representation of $\sigma|_{H'}$ must be $\mu_m(t)^\frac{\dim H'}{\varphi(m)}$, where $\mu_m$ is the cyclotomic polynomial and $\dim H'=h_0-h_1$.
\end{proof}

\begin{remark}
    Note that if $H_0$ is a sub-Hodge structure of $H$, then $\Fix\langle \sigma\rangle$ also is. Moreover if the order of $\sigma$ is a prime number, then $\Fix\langle \sigma\rangle=\Fix \sigma$, and the latter is defined over $\Q$. 
\end{remark}

In general the spectral decomposition of $\sigma$ might have a complicated relation with the Hodge decomposition. We content ourselves by analyzing the following extremal case.

\begin{definition}
\label{defextrpolHS}
We say that a polarized Hodge structure $H$ of rank $h$ is \textit{extremal} if it admits an automorphism $\sigma\in \Aut(H)$ of prime order $p=h-h_1+1$ where 
$
h_1:=\dim \Fix \sigma.
$
\end{definition}

\begin{corollary}
If $H$ is an extremal polarized Hodge structure with respect to the automorphism $\sigma$ of prime order $p$ and $H':=H/ \Fix \sigma$, then the spectrum of $\sigma|_{H'}\in \Aut(H')$ is the set of all $p$-th roots of unity with multiplicity one. Moreover, the weight $k$ Hodge structure of $H'$ induces a natural partition of the set $\{\zeta_p,\zeta_p^2,\ldots,\zeta_p^{p-1}\}=\sqcup_{r+s=k}C^{r,s}$ in such a way that $\overline{C^{r,s}}=C^{s,r}$.
\end{corollary}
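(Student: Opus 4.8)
The plan is to realize $H':=H/\Fix\sigma$ as a weight $k$ Hodge structure on which $\sigma$ acts without nonzero fixed vectors, to pin down its rational representation, and then to intersect the $\sigma$-eigenspace decomposition of $(H')_{\mathbf{C}}$ with its Hodge decomposition.

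First I would record, using the Remark following \cref{propHS} (applied to the sub-Hodge structure $H_0=H$), that $\Fix\sigma$ is a rational sub-Hodge structure of $H$; consequently $H'$ inherits a polarized Hodge structure of weight $k$ and $\sigma$ descends to $\sigma|_{H'}\in\Aut(H')$. Picking, via Maschke's theorem, a $\langle\sigma\rangle$-stable complement $W$ with $H_{\mathbf{Q}}=\Fix\sigma\oplus W$ identifies $(H')_{\mathbf{Q}}$ with $W$, on which $\sigma$ has no nonzero fixed vector; since $\sigma^p=\Id$ and $1$ is not an eigenvalue, the minimal polynomial of $\sigma|_{H'}$ over $\mathbf{Q}$ divides $\mu_p(t)=1+t+\cdots+t^{p-1}$, which is irreducible, hence equals $\mu_p(t)$. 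By the extremality hypothesis $\dim_{\mathbf{Q}}H'=h-h_1=p-1=\deg\mu_p$, so the characteristic polynomial of $\sigma|_{H'}$ is also $\mu_p(t)$ (this is exactly the computation in the proof of \cref{propHS}); thus the spectrum of $\sigma$ on $(H')_{\mathbf{C}}$ is $\{\zeta_p,\zeta_p^2,\ldots,\zeta_p^{p-1}\}$, each eigenvalue of multiplicity one.

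For the partition, I would use that $\sigma|_{H'}$ is a morphism of Hodge structures, so it preserves the Hodge decomposition $(H')_{\mathbf{C}}=\bigoplus_{r+s=k}(H')^{r,s}$ and each summand, being $\sigma$-stable, is a direct sum of $\sigma$-eigenlines. Since every eigenvalue has a one-dimensional eigenspace, the $\zeta_p^j$-eigenline lies in a unique piece $(H')^{r,s}$, and setting $C^{r,s}:=\{\zeta_p^j:\text{the }\zeta_p^j\text{-eigenline lies in }(H')^{r,s}\}$ gives the disjoint decomposition $\{\zeta_p,\ldots,\zeta_p^{p-1}\}=\sqcup_{r+s=k}C^{r,s}$. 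For the symmetry $\overline{C^{r,s}}=C^{s,r}$ I would combine two facts: complex conjugation with respect to the real structure on $H'$ carries $(H')^{r,s}$ onto $(H')^{s,r}$, and since $\sigma$ is defined over $\mathbf{R}$ it sends a $\zeta_p^j$-eigenvector to a $\overline{\zeta_p^j}=\zeta_p^{p-j}$-eigenvector; hence $\zeta_p^j\in C^{r,s}$ forces $\zeta_p^{p-j}\in C^{s,r}$.

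The argument is essentially formal, so there is no real obstacle; the only point deserving care is the bookkeeping that makes the two decompositions compatible — namely that $\Fix\sigma$ is genuinely a sub-Hodge structure and that the polarization descends to $H'$, so that $\sigma|_{H'}$ acts on a weight $k$ Hodge structure by Hodge isometries. This is exactly what the Remark after \cref{propHS} provides.
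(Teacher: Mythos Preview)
Your proof is correct and follows essentially the same approach as the paper: invoke \cref{propHS} to see that the characteristic polynomial of $\sigma|_{H'}$ is $\mu_p(t)$ (so every eigenspace is one-dimensional), and then use that $\sigma$ is a morphism of Hodge structures to conclude that the spectral decomposition refines the Hodge decomposition. Your write-up is simply more explicit --- you justify that $H'$ inherits a Hodge structure and you spell out the conjugation argument for $\overline{C^{r,s}}=C^{s,r}$, both of which the paper's three-line proof leaves implicit.
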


\begin{proof}
Applying \cref{propHS} we see that all eigenspaces of $\sigma|_{H'}$ are 1-di\-men\-sion\-al. Since it is an automorphism of Hodge structure, its spectral decomposition refines the Hodge decomposition. The desired partition is induced by this refinement, considering in each $C^{r,s}$ the eigenvalues of the eigenspaces contained in ${H'}^{r,s}$.    
\end{proof}

The main theorem of this section is a classification of the possible groups that can appear as automorphisms of extremal polarized Hodge structures. This result is a direct adaptation of \cite[Th\'eor\`eme 2]{bennama1997remarques}. By the sake of completeness we reproduce the proof here. Let us recall first the following theorem of Brauer \cite[Theorem 4]{brauer1942groups}.

\begin{theorem}[Brauer]\label{brauer}
Let $G$ be a group of order $g=p\cdot g'$, where $p$ is a prime number and $\gcd(p,g')=1$. If $G$ has more than one subgroup of order $p$ and $G$ admits a faithful representation of degree $\frac{p-1}{2}$, then the quotient of $G$ by its center is isomorphic to $\operatorname{PSL}_2(\mathbf{F}_p)$.    
\end{theorem}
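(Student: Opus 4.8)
\cref{brauer} is Brauer's classical theorem \cite{brauer1942groups}, which we invoke as a black box rather than reprove; let me nonetheless sketch how the argument runs. Since $\gcd(p,g')=1$, a Sylow $p$-subgroup $P$ of $G$ is cyclic of order $p$, and the plan is to combine Brauer's theory of blocks with cyclic defect group with elementary Sylow theory. First I would decompose the given faithful representation into irreducible constituents; as $P$ embeds into the ambient $\operatorname{GL}_{(p-1)/2}(\mathbf{C})$, at least one constituent $\chi$ has $P\not\subseteq\ker\chi$, so $\chi(1)\le\frac{p-1}{2}$ and $\chi$ lies in a $p$-block $B$ with defect group $P$ (it cannot lie in a block of defect $0$, whose characters have degree divisible by $p$).

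Next I would study $B$ through the theory of blocks with a cyclic defect group of prime order. Writing $e:=[N_G(P):C_G(P)]$ (a divisor of $p-1$) and $m:=\frac{p-1}{e}$, the block $B$ has $e$ non-exceptional ordinary characters together with a Galois orbit of $m$ exceptional ones of a common degree. If $\chi$ were non-exceptional, then $\chi(x)=\pm1$ for every $1\ne x\in P$; this pins the restriction $\chi|_P$ down to the trivial character plus $\frac{\chi(1)-1}{p}$ copies of the regular representation of $P$ (the ``$-1$'' alternative would force $\chi(1)\ge p-1$), and together with $\chi(1)\le\frac{p-1}{2}$ it forces $\chi(1)=1$, hence $P\subseteq\ker\chi$, a contradiction. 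So $\chi$ must be exceptional, of degree $\le\frac{p-1}{2}$. The sum of the $m$ Galois-conjugate exceptional characters of $B$ is an honest character whose restriction to $P$ is, up to sign, the regular representation minus the trivial one, so its degree $m\cdot\chi(1)$ is $\equiv -1\pmod p$; since only constituents of degree $\le\frac{p-1}{2}$ are available, a careful accounting of which characters and signs occur forces $m=2$ and $e=\frac{p-1}{2}$. This block-theoretic bookkeeping is the technical heart of the proof.

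With $[N_G(P):C_G(P)]=\frac{p-1}{2}$ in hand, I would finish by pure group theory. The hypothesis that $G$ has more than one subgroup of order $p$ gives $n_p:=[G:N_G(P)]>1$, and $n_p\equiv1\pmod p$, so $n_p\ge p+1$; hence $|G|\ge (p+1)\cdot p\cdot\frac{p-1}{2}\cdot[C_G(P):P]$. Examining the transitive conjugation action of $G$ on the set of its $n_p$ Sylow $p$-subgroups --- whose point stabiliser is $N_G(P)$, of the structure just computed --- together with Brauer's counting of the $p$-blocks and $p$-regular classes of $G$, one obtains $n_p=p+1$ and $C_G(P)=P\cdot Z(G)$, and identifies $G/Z(G)$ with a doubly transitive permutation group of degree $p+1$ whose point stabiliser has order $\frac{p(p-1)}{2}$; the only such group is $\operatorname{PSL}_2(\mathbf{F}_p)$. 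The main obstacle is precisely this interplay between cyclic-defect block theory and the permutation-group analysis --- the full depth of Brauer's original paper --- which is why the paper cites \cref{brauer} rather than reproving it. (In passing, the divisibility $\varphi(m)\mid h_0-h_1$ of \cref{propHS} is the Hodge-theoretic shadow of the same ``restrict to a cyclic subgroup'' device, though it is not needed for this purely group-theoretic statement.)
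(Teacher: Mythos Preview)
Your proposal correctly identifies the situation: the paper does not prove \cref{brauer} at all but simply cites it as \cite[Theorem~4]{brauer1942groups} and uses it as a black box in the proof of \cref{theoautextHS}. Your sketch of Brauer's modular-representation-theoretic argument is therefore additional content beyond what the paper provides, and the comparison is trivial---there is nothing in the paper to compare it to.

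As a minor remark, the closing parenthetical linking \cref{propHS} to Brauer's argument is a bit of a stretch: \cref{propHS} is a straightforward observation about the characteristic polynomial of a rational endomorphism, not really a ``Hodge-theoretic shadow'' of cyclic-defect block theory, and it plays no role in the statement or use of \cref{brauer}.
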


Using the above result we prove the main theorem of this section.

\begin{theorem}
\label{theoautextHS}
Let $H$ be an extremal polarized Hodge structure of odd weight $k$. Let $\sigma$ be its corresponding automorphism of prime order $p$ and $G:=\Aut(H)/\{\pm 1\}$, then $G\simeq \langle \sigma\rangle \rtimes \mu$ where $\mu<\Delta:=\{\tau\in \Gal(\Q(\zeta_p)/\Q): \tau(C^{r,s})=C^{r,s} \text{ for all }r+s=k\}$, or $G=\operatorname{PSL}_2(\mathbf{F}_p)$ with $[\Gal(\Q(\zeta_p)/\Q):\Delta]=2$ and $p\equiv -1\textup{ (mod 4)}$. 
\end{theorem}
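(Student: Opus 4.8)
The plan is to transcribe, in the general odd-weight setting, the proof of Bennama--Bertin for extremal principally polarized abelian varieties, using the Corollary preceding the statement as the only structural input. First I pass to $H':=H/\Fix\sigma$ (which in the cases relevant to us is $H$ itself, since there $\Fix\sigma=0$). By that Corollary $\dim_\Q H'=p-1$ and $\sigma$ acts on $H'$ with characteristic polynomial the $p$-th cyclotomic polynomial, so $\Q[\sigma]\cong\Q(\zeta_p)$ and $H'_\Q$ is a $1$-dimensional $\Q(\zeta_p)$-vector space; since $k$ is odd, the polarization induces on $H'$ a nondegenerate alternating form which, together with the Weil operator, is the imaginary part of a Hermitian form over the CM field $\Q(\zeta_p)$. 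The weight decomposition produces a CM type $\Phi:=\{a\in(\ZZ/p)^\times:\ \zeta_p^a \text{ occurs in }\bigoplus_{r>s}H'^{r,s}\}=\bigsqcup_{r>s}C^{r,s}$; because $k$ is odd one has $C^{r,s}\neq C^{s,r}$ for all $r+s=k$, hence $\Phi\sqcup\overline{\Phi}=(\ZZ/p)^\times$ and $|\Phi|=(p-1)/2$. Put $W:=\bigoplus_{r>s}H'^{r,s}$, a complex vector space of dimension $(p-1)/2$ stable under every automorphism of the Hodge structure.

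Next I would compute the centralizer and normalizer of $\sigma$ in $G$. An element of $\Aut(H)$ commuting with $\sigma$ preserves $\Fix\sigma$ and $H'$, and on $H'$ it lies in the centralizer of $\sigma$ in $\GL(H'_\Q)$, i.e. in $\Q(\zeta_p)^\times$; preserving the polarization forces it to be a root of unity, hence to lie in $\langle-1,\sigma\rangle$. Thus $C_G(\sigma)=\langle\sigma\rangle$. For $N:=N_G(\langle\sigma\rangle)$, conjugation gives $\chi\colon N\to\operatorname{Aut}(\langle\sigma\rangle)=(\ZZ/p)^\times=\Gal(\Q(\zeta_p)/\Q)$ with kernel $C_G(\sigma)=\langle\sigma\rangle$; since a normalizing element is $\Q$-linear and realizes $\chi(g)$ on $\Q[\sigma]\cong\Q(\zeta_p)$ while preserving the Hodge decomposition, $\chi(g)$ must fix the partition $\{C^{r,s}\}$, i.e. $\operatorname{im}\chi\subseteq\Delta$. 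Hence $N\cong\langle\sigma\rangle\rtimes\mu$ with $\mu\le\Delta$.

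I then split into two cases. If $\langle\sigma\rangle$ is normal in $G$ then $G=N\cong\langle\sigma\rangle\rtimes\mu$, the first alternative. Otherwise $G$ has more than one subgroup of order $p$, and I verify the hypotheses of \cref{brauer}: (i) $p\mid|G|$ but $p^2\nmid|G|$, because in any $p$-subgroup $P\ni\sigma$ the subgroup $N_P(\langle\sigma\rangle)$ must centralize $\sigma$ (a $p$-group has trivial image in $(\ZZ/p)^\times$), so $N_P(\langle\sigma\rangle)\subseteq C_G(\sigma)=\langle\sigma\rangle$, which by the normalizer-growth property of $p$-groups forces $P=\langle\sigma\rangle$; (ii) the representation $G\to\GL(W)$ has degree $(p-1)/2$ and is faithful, since an element acting trivially on $W$ acts trivially on $\overline{W}$ too (it is defined over $\R$), hence on $H'_\C$ and on $H'_\Q$, so it lies in $C_G(\sigma)=\langle\sigma\rangle$ and acts faithfully on $H'$, hence is trivial. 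Then \cref{brauer} gives $G/Z(G)\cong\operatorname{PSL}_2(\mathbf{F}_p)$; as $Z(G)\subseteq C_G(\sigma)=\langle\sigma\rangle$ and $Z(G)=\langle\sigma\rangle$ is excluded ($\langle\sigma\rangle$ is not normal), $Z(G)=1$ and $G\cong\operatorname{PSL}_2(\mathbf{F}_p)$. Finally $\operatorname{PSL}_2(\mathbf{F}_p)$ has a faithful representation of degree $(p-1)/2$ only if $p\equiv-1\pmod 4$ (for $p\equiv 1\pmod 4$ its smallest nontrivial irreducible has degree $(p+1)/2$), and the normalizer of a $p$-Sylow in $\operatorname{PSL}_2(\mathbf{F}_p)$ has order $p(p-1)/2$, so $|\mu|=(p-1)/2$; since complex conjugation sends $C^{r,s}$ to $C^{s,r}\neq C^{r,s}$ it lies outside $\Delta$, so $|\Delta|<p-1$, and since $(p-1)/2=|\mu|$ divides $|\Delta|$ we conclude $\mu=\Delta$ and $[\Gal(\Q(\zeta_p)/\Q):\Delta]=2$.

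I expect the main obstacle to be step (ii) together with the identity $C_G(\sigma)=\langle\sigma\rangle$: this is precisely where one must reproduce the elementary but essential linear algebra of Hermitian forms over $\Q(\zeta_p)$ underlying Bennama--Bertin's argument, and where the assumption that $k$ is odd — making the polarization alternating and forcing $|\Phi|=(p-1)/2$ — is genuinely used. A secondary point to keep track of is the fixed part $\Fix\sigma$, which vanishes in the intended applications to Klein hypersurfaces of Wagstaff type (there $n$ is odd and a direct Hodge-number computation gives $h_1=0$), so it causes no trouble in those cases.
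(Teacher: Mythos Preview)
Your proposal is correct and follows essentially the same approach as the paper: both adapt Bennama--Bertin by identifying $H_\Q$ with $\Q(\zeta_p)$, computing $C_G(\sigma)=\langle\sigma\rangle$ and $N_G(\langle\sigma\rangle)\simeq\langle\sigma\rangle\rtimes\mu$ with $\mu\le\Delta$, showing $\langle\sigma\rangle$ is a $p$-Sylow, and then invoking Brauer's theorem with the degree-$(p-1)/2$ representation on $F^{(k+1)/2}H_\C$. The only cosmetic differences are that you use the normalizer-growth property of $p$-groups where the paper uses the center of a $p$-Sylow, and you deduce $p\equiv -1\pmod 4$ from the character table of $\operatorname{PSL}_2(\mathbf{F}_p)$ whereas the paper argues directly that complex conjugation lies outside $\Delta$; both the paper and you are equally informal about the passage from $\Aut(H)$ to $G$ when asserting that the representation on $F^{(k+1)/2}H_\C$ is a faithful linear representation of $G$ (strictly speaking one applies Brauer to $\Aut(H)$ and then identifies $Z(\Aut(H))=\{\pm 1\}$), and about the role of $\Fix\sigma$.
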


\begin{proof}
The action of $\sigma$ on $H$ induces a structure of $\ZZ[\zeta_p]$-module on it, which in turn induces a $\Q(\zeta_p)$-vector space structure on $H\otimes_{\mathbf{Z}}\Q$. Since $H$ has rank $p-1$, once we fix a non-zero element $\beta_0\in H\otimes_{\mathbf{Z}}\Q$ we have an isomorphism of $\Q$-vector spaces
$$
\Q(\zeta_p)\simeq H\otimes_{\mathbf{Z}}\Q
$$
taking $1\mapsto \beta_0$. Under this isomorphism every $\tau_j\in\Gal(\Q(\zeta_p)/\Q)$ given by $\tau_j(\zeta_p)=\zeta_p^j$ induces an element $\ttau_j\in\GL(H\otimes_{\mathbf{Z}}\Q)$ given by $\ttau_j(\sigma^i(\beta_0))=\sigma^{i\cdot j}(\beta_0).$ From this we see that $\ttau_j\sigma\ttau_j^{-1}=\sigma^j$. On the other hand, let $\varphi\in N(\langle \sigma\rangle)$ be such that
$$
\varphi\sigma\varphi^{-1}=\sigma^j.
$$
It follows that $\sigma$ and $\sigma^j$ have the same eigenvalues when restricted to each ${H}^{r,s}$ and so we get a group homomorphism
$$
\varphi\in N_{\Aut(H)}(\langle \sigma\rangle)\mapsto \tau_j\in\Delta
$$
whose kernel is $C_{\Aut(H)}(\langle\sigma\rangle)$ and its image is $\mu<\Delta$. Given any $\varphi\in C_{\Aut(H)}(\langle\sigma\rangle)$ we have that $\varphi|_{H\otimes_{\mathbf{Z}}\Q}$ is a $\Q(\zeta_p)$-linear map, thus there exists some $u\in\Q(\sigma)^\times$ such that 
$$
\varphi(x)=u(x)
$$
for every $x\in H\otimes_{\mathbf{Z}}\Q$. Since $\varphi$ has finite order, it follows that $u$ is a root of unity and so it is of the form $u=\pm\sigma^i$. Thus $C_{\Aut(H)}(\langle \sigma\rangle)=\langle\sigma\rangle\times \{\pm 1\}$ and so $C_{G}(\langle \sigma\rangle)=\langle\sigma\rangle$. Since the order of $\mu$ and $\langle \sigma\rangle$ are coprime, it follows from Schur-Zassenhaus theorem \cite[Theorem 8.10]{Suz82} that
$$
N_G(\langle\sigma\rangle)\simeq \langle \sigma\rangle\rtimes \mu.
$$
The rest of the proof is purely group theoretical. We claim $\langle \sigma\rangle $ is a $p$-Sylow subgroup of $G$. In fact, consider $U$ a $p$-Sylow containing $\langle \sigma\rangle$. The center $Z(U)$ is a non-trivial $p$-group contained in $N_G(\langle\sigma\rangle)$ and $|N_G(\langle\sigma\rangle)|=p\cdot r$ where $\gcd(p,r)=1$ since $r=|\mu|\mid p-1$. It follows that $Z(U)=\langle\sigma\rangle$. Then $U\subseteq N_G(\langle \sigma\rangle)$ is a $p$-group and so $U=\langle \sigma\rangle$ as claimed. By Sylow theorem the number of $p$-Sylow subgroups of $G$ is $q:=[G:N_G(\langle \sigma\rangle)]\equiv 1$ (mod $p$). If $q=1$ then $G= N_G(\langle\sigma\rangle)\simeq\langle\sigma\rangle\rtimes\mu$ as claimed. If $q\neq 1$ we apply Brauer's \cref{brauer} and get that $G\simeq\operatorname{PSL}_2(\mathbf{F}_p)$. For this note that the complex representation of $G$ of degree $\frac{p-1}{2}$ is given by its natural restriction to the Hodge filtration $F^\frac{k+1}{2}H_\C$. The last assertion of the theorem follows from the fact that $N_G(\langle\sigma\rangle)$ is isomorphic to a Borel subgroup of $\operatorname{PSL}_2(\mathbf{F}_p)$ and so it has order $\frac{p(p-1)}{2}$. This implies $|\Delta|=\frac{p-1}{2}$ and so $\Delta\simeq (\mathbf{F}_p^\times)^2$. Finally to see that $p\equiv -1$ (mod $4$), note that $-1\notin (\mathbf{F}_p^\times)^2$ since the conjugation automorphism never belongs to $\Delta$ (because $\overline{C^{r,s}}=C^{s,r}$).
\end{proof}

\begin{remark}\label{rmk:autHS-ZvsQ}
In principle $\mu$ might be smaller than $\Delta$, however, every element of $\Delta$ represents an automorphism of the rational polarized Hodge structure $H\otimes_{\mathbf{Z}}\Q$. In fact, after the previous proof it is enough to show that every $\ttau_j\in\GL(H\otimes_{\mathbf{Z}}\Q)$ is an automorphism of rational polarized Hodge structure for every $\tau_j\in\Delta$. In order to see this we construct an explicit non-zero element $\beta_0\in H\otimes_{\mathbf{Z}}\Q$ in terms of the spectral decomposition of $H\otimes_{\mathbf{Z}}\C$ as follows: For every $i\in\{1,2,\ldots,p-1\}$ let $V(\zeta_p^i)\subseteq H\otimes_{\mathbf{Z}}\C$ be the 1-dimensional eigenspace corresponding to the eigenvalue $\zeta_p^i$. Hence
$
H\otimes_{\mathbf{Z}}\C=\bigoplus_{i=1}^{p-1}V(\zeta_p^i).
$
Each $V(\zeta_p^i)$ is defined over $\Q(\zeta_p)$, i.e., we can write
$
H\otimes_{\mathbf{Z}}\Q(\zeta_p)=\bigoplus_{i=1}^{p-1}V'(\zeta_p^i)
$
where $V'(\zeta_p^i)\otimes_{\mathbf{Z}}\C=V(\zeta_p^i)$ and each $V'(\zeta_p^i)$ is a 1-dimensional $\Q(\zeta_p)$-vector space. Let $\omega_1\in V'(\zeta_p)$ be a non-zero element. For every $j\in\{1,2,\ldots,p-1\}$ let $t_j\in \GL(H\otimes_{\mathbf{Z}}\Q(\zeta_p),\Q)$ be the $\Q$-linear map given by the Galois action of $\Gal(\Q(\zeta_p)/\Q)$ on $H\otimes_{\mathbf{Z}}\Q(\zeta_p)$, i.e. such that 
$
t_j(\zeta_p^i\cdot \beta)=\zeta_p^{i\cdot j}\cdot\beta \ \ \text{ for all }\beta\in H.
$
Clearly $t_j$ and $\sigma_{\Q(\zeta_p)}:=\sigma\otimes \operatorname{id}_{\Q(\zeta_p)}$ commute. Define 
$
\omega_j:=t_j(\omega_1)\in V'(\zeta_p^j).
$
It is easy to see that a basis of $H\otimes_{\mathbf{Z}} \Q$ is given by
$$
\beta_i:=\sum_{\ell=1}^{p-1}\zeta_p^{i\cdot \ell}\omega_\ell
$$
since clearly $t_j(\beta_i)=\sum_{\ell=1}^{p-1}\zeta_p^{i\cdot j\ell }\omega_{j\ell}=\beta_i$ and they are $\Q$-linearly independent. As in the proof above, let us assume the isomorphism $H\otimes_{\mathbf{Z}}\Q\simeq \Q(\zeta_p)$ is given by sending 
$
1\in\Q(\zeta_p)\mapsto\beta_0:=\sum_{\ell=1}^{p-1}\omega_\ell\in H\otimes_{\mathbf{Z}}\Q.
$
Thus it sends each $\zeta_p^i\mapsto \sigma^i(\beta_0)=\beta_i$, and so each $\ttau_j$ is given by
\begin{equation}
\label{eq1}
\ttau_j(\beta_i)=\beta_{i\cdot j}.
\end{equation}
Let $\langle\cdot,\cdot\rangle$ be the Hermitian product on $H\otimes_{\mathbf{Z}}\C$ induced by the polarization on $H\otimes_{\mathbf{Z}}\Q$. Since 
$
\langle \omega_i,\omega_j\rangle= \langle \sigma(\omega_i),\sigma(\omega_j)\rangle=\zeta_p^{i-j}\langle\omega_i,\omega_j\rangle
$
it follows that $\langle\omega_i,\omega_j\rangle=0$ if $i\neq j$. On the other hand $\langle\omega_i,\omega_i\rangle\in \Q(\zeta_p)\cap \R=\Q$. Then
$$
\langle\ttau_j(\beta_i),\ttau_j(\beta_h)\rangle=\langle\beta_{i\cdot j},\beta_{h\cdot j}\rangle=\sum_{\ell=1}^{p-1}\zeta_p^{j\cdot(i-h)\cdot\ell}\langle\omega_\ell,\omega_\ell\rangle=\tau_j(\langle\beta_i,\beta_h\rangle)=\langle\beta_i,\beta_h\rangle
$$
and so each $\ttau_j$ preserves the polarization. Finally, it follows from \eqref{eq1} that
$
(\ttau_j\otimes{\operatorname{id}_\C})(\omega_\ell)=\omega_{j^{-1}\cdot \ell}
$
and since $t_{j^{-1}}\in\Delta$ we get that $\ttau_j$ is in fact a morphism of rational Hodge structures.
\end{remark}

\section{Hypersurfaces of Wagstaff type}
\label{sec5}

In this section we will study the Rational Punctual Torelli Principle for some simple hypersurfaces which we call of Wagstaff type.

\begin{definition}
We say a smooth hypersurface $X\subseteq\mathbf{P}^{n+1}$ of degree $d$ is \textit{of Wagstaff type} if it admits an automorphism of prime order $p>(d-1)^n$.
\end{definition}

The above definition is based on the following result about primes realized as the order of an automorphism of a smooth hypersurface.

\begin{theorem}[{\cite{gl13}}]
\label{gal13}
Let $X\subseteq\mathbf{P}^{n+1}$ be a smooth hypersurface of dimension $n$ and degree $d$ admitting an automorphism of prime order $p$. Then $p<(d-1)^{n+1}$. Furthermore if $p>(d-1)^n$ then $X$ is isomorphic to the Klein hypersurface, $n=2$ and $p=(d-1)^2+1$, or $n+2$ is prime and $p=\frac{(d-1)^{n+2}+1}{d}$.  
\end{theorem}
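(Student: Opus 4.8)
The plan is to reduce, via the Jacobian/differential method, to an explicit diagonal automorphism and then do number theory. First I would linearize: by \cref{matsumura-monsk} an automorphism $\varphi$ of order $p$ of $X=V(F)$ is induced by some $\tvarphi\in\GL(V)$ (the finitely many exceptional pairs $(n,d)$ being treated by hand), and after rescaling the lift we may take $\tvarphi$ of order exactly $p$, so that in a suitable basis $\tvarphi=\diag(\zeta_p^{a_0},\dots,\zeta_p^{a_{n+1}})$ and $\tvarphi^*(F)=\zeta_p^m F$; the latter condition says exactly that every monomial $x^\alpha$ of $F$ satisfies $\sum_i a_i\alpha_i\equiv m\pmod p$. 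Next I would feed in smoothness through the Jacobian criterion at each coordinate point $e_i$: it forces, for each $i$, a monomial $x_i^{d-1}x_{\tau(i)}$ to occur in $F$ (with $\tau(i)=i$ exactly when $x_i^d$ occurs), producing a self-map $\tau$ of $\{0,\dots,n+1\}$ together with the congruences $(d-1)a_i+a_{\tau(i)}\equiv m\pmod p$.

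If $p\mid d$ one immediately gets $p\le d$, which is below $(d-1)^{n+1}$ and, for $n\ge 2$, below $(d-1)^n$; so that case is harmless and I may assume $p\nmid d$. Translating all weights by $m/d\bmod p$ turns the relations into the homogeneous form $b_{\tau(i)}\equiv(1-d)b_i\pmod p$, whence $b_{\tau^k(i)}\equiv(1-d)^k b_i$, and not all $b_i$ vanish since $\varphi\ne\Id$ in $\PGL(V)$. I would then inspect the functional graph of $\tau$: starting from an index with $b_i\ne0$, its forward orbit stays inside $\{j:b_j\ne0\}$ (using $p\nmid d-1$, automatic for $p$ large) and hence lands on a cycle $C$ of some length $\ell\le n+2$ with nonvanishing $b$-values; running once around $C$ gives $\bigl((1-d)^\ell-1\bigr)b_{i_0}\equiv0$, so $p\mid(1-d)^\ell-1$. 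Using the cyclotomic factorizations of $x^e\mp1$ and the divisibility $d\mid(d-1)^e+1$ for $e$ odd, a routine case analysis of $(1-d)^\ell-1$ shows that every prime dividing it is $<(d-1)^{n+1}$, which already gives the first assertion, and that for $\ell\le n+1$ every such prime is in fact $<(d-1)^n$.

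Consequently, if $p>(d-1)^n$ then necessarily $\ell=n+2$: the map $\tau$ is a single $(n+2)$-cycle, all $b_i\ne0$, and after reindexing so that $\tau(i)=i+1$ cyclically, $b_i=(1-d)^i b_0$ and $p\mid(1-d)^{n+2}-1$. At this stage $F$ contains all $n+2$ ``Klein'' monomials $x_i^{d-1}x_{i+1}$, and the decisive step is to show that it contains no others: an extra monomial $x^\alpha$ of degree $d$ would satisfy $\sum_i(1-d)^i\alpha_i\equiv0\pmod p$, and combining the size bound $\bigl|\sum_i(1-d)^i\alpha_i\bigr|\le d(d-1)^{n+1}$ with $p>(d-1)^n$ and control of the multiplicative order of $1-d$ modulo $p$ forces $\alpha$ to be one of the Klein exponent vectors. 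Hence $F=\sum_i c_i x_i^{d-1}x_{i+1}$ with all $c_i\ne0$; rescaling the variables turns it into the Klein form, so $X$ is isomorphic to the Klein hypersurface.

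It remains to read off the arithmetic of $p$ on the Klein hypersurface itself. By \cref{theoautklein1} and \cref{remarkautdiag} its diagonal automorphisms form a cyclic group of order $m=\frac{(d-1)^{n+2}-(-1)^{n+2}}{d}$, the remaining automorphisms being built from the order-$(n+2)$ cyclic permutation; since $n+2\le(d-1)^n$ for $n\ge2$, a prime $p>(d-1)^n$ acting on $X$ must divide $m$. A final cyclotomic bookkeeping on the prime divisors of $m$ — writing $m$ in terms of the values $\Phi_N(d-1)$ with $N\mid 2(n+2)$ and using that a prime dividing $\Phi_N(d-1)$ is $\equiv1\pmod N$ or divides $N$ — shows that $m$ can have a prime factor exceeding $(d-1)^n$ only when $n=2$, where it equals $(d-1)^2+1$, or when $n+2$ is prime, where it is the Wagstaff value $\frac{(d-1)^{n+2}+1}{d}$. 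The main obstacle I expect is precisely this last pair of arithmetic inputs, the ``no extra monomial'' lemma of the previous paragraph and the control of large prime divisors of $m$: both reduce to a sharp statement that a large prime dividing a cyclotomic-type value at $d-1$ cannot be small compared with the degree of the relevant factor, and both need the cyclotomic factorization together with careful size estimates; disposing of the residual small pairs $(n,d)$ by direct computation is a comparatively minor chore.
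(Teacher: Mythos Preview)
The paper does not prove this statement: it is quoted verbatim from \cite{gl13} and used as input. There is therefore no in-paper proof to compare against; what can be said is that your outline reproduces the strategy of \cite{gl13} --- diagonalize the prime-order lift, extract the monomials $x_i^{d-1}x_{\tau(i)}$ from smoothness at the coordinate points, pass to the homogeneous weights $b_{\tau(i)}\equiv(1-d)b_i$, and read off $p\mid(1-d)^\ell-1$ from a cycle of $\tau$ --- and then finishes with cyclotomic bookkeeping. So the architecture is correct.

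Your two self-identified obstacles are genuine and your sketch does not yet dispatch them. For the ``no extra monomial'' step, the inequality $\bigl|\sum_i(1-d)^i\alpha_i\bigr|\le d(d-1)^{n+1}$ together with $p>(d-1)^n$ only yields that the integer $\sum_i(1-d)^i\alpha_i$ equals $kp$ with $|k|<d(d-1)$; this leaves on the order of $d^2$ residues to exclude, and ``control of the multiplicative order of $1-d$'' is not by itself enough to do so --- one needs a sharper combinatorial/arithmetic argument on the representations of multiples of $p$ by $\sum(1-d)^i\alpha_i$ with $\sum\alpha_i=d$, which is precisely the technical core of \cite{gl13}. Likewise, the claim that for $\ell\le n+1$ every prime divisor of $(1-d)^\ell-1$ is below $(d-1)^n$ is true but requires splitting into the parity of $\ell$ and using the factorization $(d-1)^\ell+1=d\cdot\frac{(d-1)^\ell+1}{d}$ for odd $\ell$ (resp.\ $(d-1)^\ell-1=((d-1)^{\ell/2}-1)((d-1)^{\ell/2}+1)$ for even $\ell$); your one-line appeal to ``cyclotomic factorizations'' hides this. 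Finally, invoking \cref{theoautklein1} in the last paragraph is harmless but unnecessary: once $\ell=n+2$ you already have $p\mid(1-d)^{n+2}-1=\pm dm$, hence $p\mid m$ directly, and \cref{theoautklein1} does not cover the sporadic pairs $(n,d)\in\{(2,3),(3,3),(1,4)\}$ anyway.
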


\begin{remark}
The above result characterizes all hypersurfaces of Wagstaff type of dimension $n>2$ and degree $d$ as Klein hypersurfaces such that $n+2$ is prime and $\frac{(d-1)^{n+2}+1}{d}$ is also prime. Primes of the form 
$$
Q(b,q)=\frac{b^q+1}{b+1}
$$
are known as \textit{generalized Wagstaff primes base $b$} and are conjectured to be infinitely many. Classical Wagstaff primes are those of the form $Q(2,q)$, and Mersenne primes correspond to those of the form $Q(-2,q)$. A table with some generalized Wagstaff primes can be found in \cite{dubner2000primes}.
\end{remark}

\begin{proposition}
For every Klein hypersurface of Wagstaff type of dimension $n>2$ the middle primitive cohomology group is an extremal polarized Hodge structure. 
\end{proposition}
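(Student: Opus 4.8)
The plan is to reduce the statement to two facts about $H:=\h^n(X,\ZZ)_{\prim}$: that it has $\rank$ equal to $p-1$, and that the relevant order‑$p$ automorphism acts on it without nonzero fixed vectors. Together, these give $p=\rank(H)-\dim\Fix+1$, which is exactly the condition of \cref{defextrpolHS}. Write $d$ for the degree. Since $n>2$, \cref{gal13} gives that $n+2$ is prime and $p:=\frac{(d-1)^{n+2}+1}{d}$ is prime; as $n+2\ge 5$ is prime, it is odd, so $n$ is odd and $m=\frac{(d-1)^{n+2}-(-1)^{n+2}}{d}=p$. Hence the diagonal automorphism $\sigma$ of \cref{theoautklein1}, which has order $m$, has prime order $p$, and (by \cref{matsumura-monsk} it is linear, so) it induces an automorphism of the polarized Hodge structure $H$. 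Put $h:=\rank H$ and $h_1:=\dim\Fix\sigma$.

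For the rank: $h=b_n^{\prim}(X)$ depends only on $(n,d)$, and for $n$ odd a standard Chern class computation (via $\chi(X)=(n+1)-b_n^{\prim}$) gives $b_n^{\prim}(X)=\frac{(d-1)^{n+2}-(d-1)}{d}$. On the other hand, directly from the definition of $p$,
$$p-1=\frac{(d-1)^{n+2}+1}{d}-1=\frac{(d-1)^{n+2}-(d-1)}{d}.$$
Hence $h=p-1$.

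Next I would determine $h_1$ from the fixed locus of $\sigma$ on $X$. The diagonal entries of $\sigma=\diag\big(\zeta_{dm}^{(1-d)^0},\ldots,\zeta_{dm}^{(1-d)^{n+1}}\big)$ are pairwise distinct in $\mathbf P^{n+1}$: indeed $1-d$ is a unit modulo $dm=(d-1)^{n+2}+1$ whose order divides the prime $n+2$ and is not $1$ since $m>1$, hence equals $n+2$, so $(1-d)^0,\ldots,(1-d)^{n+1}$ are pairwise distinct modulo $dm$. Therefore $\Fix(\sigma)\subseteq\mathbf P^{n+1}$ consists of the $n+2$ coordinate points, and each lies on $X$ since every monomial of the Klein form involves two distinct variables; moreover these fixed points are nondegenerate. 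By the Lefschetz fixed point formula for the holomorphic automorphism $\sigma$ (all local indices are $+1$), $\sum_i(-1)^i\operatorname{tr}\big(\sigma^{*}\mid\h^i(X,\Q)\big)=n+2$. Since $\sigma\in\PGL_{n+2}(\C)$ fixes the hyperplane class, it acts trivially on $\h^i(X,\Q)$ for $i\neq n$, and for $n$ odd these are the $n+1$ one‑dimensional spaces with $i$ even, $0\le i\le 2n$, while $\h^n(X,\Q)=H\otimes\Q$. Hence $(n+1)-\operatorname{tr}(\sigma^{*}\mid H\otimes\Q)=n+2$, i.e. $\operatorname{tr}(\sigma^{*}\mid H\otimes\Q)=-1$. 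Finally, since $\sigma$ has prime order $p$, the $\Q[\sigma]$‑module $H\otimes\Q$ is isomorphic to $\Q^{\oplus h_1}\oplus\Q(\zeta_p)^{\oplus k}$ with $h_1+k(p-1)=h$ and with trace $h_1-k$; together with $h_1-k=-1$ and $h=p-1$ this forces $h_1=0$ (and $k=1$). Thus $p=h-h_1+1$, so $H$ is an extremal polarized Hodge structure.

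The step I expect to be the main obstacle is obtaining $h_1=0$. The Lefschetz argument circumvents an explicit eigenvalue computation, at the cost of the parity bookkeeping in the fixed point formula and of checking that the coordinate fixed points are nondegenerate on $X$. An alternative, more computational route is to identify $H\otimes\C$ with $\bigoplus_{q=0}^{n}R_F^{(q+1)d-n-2}$ via Griffiths' residues and to read off the $\sigma$‑eigenvalues from the monomial basis $\{x^{a}:0\le a_i\le d-2\}$ of the Jacobian ring $R_F$ (a basis one obtains from the relations $x_{i-1}^{d-1}=-(d-1)x_i^{d-2}x_{i+1}$ by a short termination argument). This is longer, but it exhibits the $\sigma$‑eigenvalues as exactly the primitive $p$‑th roots of unity, each with multiplicity one, and in particular produces the partition of $\{\zeta_p,\ldots,\zeta_p^{p-1}\}$ by Hodge type needed later in this section.
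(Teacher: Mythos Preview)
Your proof is correct, but it takes a noticeably different route from the paper's. The paper argues in two lines: it records the standard formula $\rank H=\frac{(d-1)^{n+2}+1}{d}-1=p-1$ and then invokes \cref{propHS}, which gives $\varphi(p)=p-1\mid h-h_1$; since $0\le h_1\le h=p-1$ and $\sigma$ acts nontrivially on $H$, this forces $h_1=0$ immediately. No fixed point count and no trace computation are needed.

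By contrast, you compute $h_1=0$ via the topological Lefschetz fixed point formula: you identify $\Fix(\sigma)\subset X$ as the $n{+}2$ coordinate points, deduce $\operatorname{tr}(\sigma^{*}\mid H\otimes\Q)=-1$, and then solve the two linear equations coming from the $\Q[\sigma]$-module decomposition. This is longer and requires the auxiliary checks (pairwise distinctness of the diagonal eigenvalues, nondegeneracy of the fixed points), but it yields strictly more information: the exact trace, the explicit fixed locus, and---via your alternative Griffiths-residue computation---the eigenvalue distribution across the Hodge pieces, which is precisely what feeds into \cref{thmtorellimodular}. The paper's divisibility argument is cleaner for the proposition as stated; your approach is essentially doing by hand the spectral analysis that the paper postpones to the proof of \cref{thmtorellimodular}.
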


\begin{proof}
Let $X$ be a Klein hypersurface of Wagstaff type of dimension $n$ and degree $d$. Then by \cref{gal13} and \cref{remarkautdiag} there exists $\sigma\in \Aut(X)$ acting diagonally on $X=\{x_0^{d-1}x_1+x_1^{d-1}x_2+\cdots+x_n^{d-1}x_{n+1}+x_{n+1}^{d-1}x_0=0\}$ of prime order $p=\frac{(d-1)^{n+2}+1}{d}$ and given by
\begin{equation}
\label{autdiag}
\sigma(x_0:x_1:\cdots:x_{n+1})=(\zeta_px_0:\zeta_p^{1-d}x_1:\zeta_p^{(1-d)^2}x_2:\cdots:\zeta_p^{(1-d)^{n+1}}x_{n+1}).  
\end{equation}
The extremality of $H:=\h^n(X,\ZZ)_{\prim}$ follows from \cref{propHS} and the following well known equality (see for instance \cite[Corollary 1.1.12]{Huy23}) 
$$
\rank(H)=\dim_\C \h^n(X,\C)_{\prim}=\frac{(d-1)^{n+2}+1}{d}-1. 
$$
\end{proof}

\begin{theorem}
\label{thmtorellimodular}
Let $X=\{K=0\}$ be a Klein hypersurface of Wagstaff type of dimension $n\ge 3$ and degree $d\ge 3$, with $(n,d)\neq (3,3)$. Let $p=\frac{(d-1)^{n+2}+1}{d}$. For each $q\in\{0,1,\ldots,n\}$ consider the set
$$
S_q:=\left\{\sum_{i=0}^{n+1}\beta_i(1-d)^i\in \ZZ/p\ZZ: 0\le \beta_i\le d-2, \sum_{i=0}^{n+1}\beta_i=d(q+1)-n-2\right\}.
$$
Then $X$ satisfies the Punctual Torelli Principle as long as
\begin{equation}
\label{conditionTorelli}
\{m\in (\ZZ/p\ZZ)^\times: m\cdot S_q=S_q \ \ , \ \forall q=0,\ldots,n\}=\langle 1-d\rangle<(\ZZ/p\ZZ)^\times.
\end{equation}
\end{theorem}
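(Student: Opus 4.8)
The plan is to show that both $\Aut(X)$ and $G:=\Aut(\h^n(X,\ZZ)_{\prim})/\{\pm 1\}$ are isomorphic to the explicit group $(\ZZ/p\ZZ)\rtimes(\ZZ/(n+2)\ZZ)$ of order $p(n+2)$, with the isomorphism realized by $f\mapsto f^*$. Recall that for Wagstaff type with $n\ge 3$ the integer $n+2$ is an odd prime, so the weight $n$ is odd and, by the proposition above, $H:=\h^n(X,\ZZ)_{\prim}$ is an extremal polarized Hodge structure with respect to $\sigma$. Hence Theorem~\ref{theoautextHS} applies: $G\simeq\langle\sigma\rangle\rtimes\mu$ with $\mu<\Delta$, or $G=\operatorname{PSL}_2(\mathbf F_p)$ with $|\Delta|=\tfrac{p-1}{2}$. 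Everything therefore reduces to identifying the group $\Delta\subseteq\Gal(\Q(\zeta_p)/\Q)\cong(\ZZ/p\ZZ)^\times$.

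To compute $\Delta$ I would make the $\sigma$-eigenvalue decomposition of the Hodge pieces explicit. Via Griffiths' residue isomorphism one has $\h^{n-q,q}(X)_{\prim}\cong R_K^{\,d(q+1)-n-2}$, where $R_K$ is the Jacobian ring of the Klein form. A monomial basis of $R_K$ is given by the $x^\beta$ with $0\le\beta_i\le d-2$ (the Hilbert function of $R_K$ coincides with that of the Fermat Jacobian ring, and this basis for the Klein form can be produced by a Gröbner-basis computation). Choosing the representative $\tilde\sigma=\diag(\zeta_p^{(1-d)^0},\dots,\zeta_p^{(1-d)^{n+1}})$ one checks $\tilde\sigma^*K=K$, so on the residue class attached to $x^\beta$ the automorphism $\sigma$ acts by $\zeta_p^{\,\sum_i\beta_i(1-d)^i}$ times the twist $\det\tilde\sigma=\zeta_p^{\,\sum_i(1-d)^i}$; but $\sum_{i=0}^{n+1}(1-d)^i\equiv 0\pmod p$, since $(d-1)^{n+2}\equiv -1\pmod p$ and $n+2$ is odd, so the twist is trivial. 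Thus $C^{n-q,q}=\{\zeta_p^a:a\in S_q\}$ and $\Delta=\{m\in(\ZZ/p\ZZ)^\times:m\cdot S_q=S_q\text{ for all }q\}$, which by hypothesis~\eqref{conditionTorelli} equals $\langle 1-d\rangle$. Note that $\langle 1-d\rangle$ has order exactly $n+2$: it is nontrivial because $p\nmid d$, and $(1-d)^{n+2}\equiv 1\pmod p$ with $n+2$ prime.

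Next I would pin down $G$. The $\operatorname{PSL}_2(\mathbf F_p)$ alternative would force $|\Delta|=\tfrac{p-1}{2}$, i.e.\ $p=2n+5$; a short check shows this happens among Wagstaff-type Klein hypersurfaces with $n\ge 3$ only for $(n,d)=(3,3)$, which is excluded by hypothesis, so that case does not occur and $G=\langle\sigma\rangle\rtimes\mu$. Moreover, the cyclic permutation automorphism $\nu\in\Aut(X)$ of order $n+2$ conjugates $\sigma$ to a power $\sigma^{j}$ with $j$ a generator of $\langle 1-d\rangle$; hence $\nu^*$ normalizes $\langle\sigma^*\rangle$ in $\Aut(H)$ and its image under the homomorphism $N_{\Aut(H)}(\langle\sigma\rangle)\to\Delta$ built in the proof of Theorem~\ref{theoautextHS} generates $\langle 1-d\rangle$. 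Combined with $\mu\subseteq\Delta=\langle 1-d\rangle$, this gives $\mu=\Delta$ and $|G|=p(n+2)$.

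Finally I would match $G$ with $\Aut(X)$. By Theorems~\ref{theoautklein1} and~\ref{theoautklein2}, $\Aut(X)\cong(\ZZ/m\ZZ)\rtimes(\ZZ/(n+2)\ZZ)$ with $m=\tfrac{(d-1)^{n+2}-(-1)^{n+2}}{d}=p$ (again using that $n+2$ is odd), so $|\Aut(X)|=p(n+2)=|G|$; in particular $\Aut(X)$ has odd order, hence no involutions at all, a fortiori none inducing $-\operatorname{Id}$ on $H$. The natural homomorphism $\Aut(X)\to\Aut(H)$, $f\mapsto f^*$, is injective: its kernel meets $\langle\sigma\rangle$ trivially (since $\sigma^*$ has order $p$), so it embeds into $\Aut(X)/\langle\sigma\rangle\cong\ZZ/(n+2)\ZZ$, and a nontrivial kernel would place $\nu^*$ in the abelian subgroup $\langle\sigma^*\rangle$, contradicting $\nu^*\sigma^*(\nu^*)^{-1}=(\sigma^*)^{j}\neq\sigma^*$. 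Composing with $\Aut(H)\to G$ keeps the kernel trivial, since $f^*=-\operatorname{Id}$ would force $f^2=\operatorname{id}$, hence $f=\operatorname{id}$ by the odd-order remark, whence $f^*=\operatorname{Id}\neq-\operatorname{Id}$. Therefore $\Aut(X)\hookrightarrow G$ and, comparing cardinalities, this map is an isomorphism induced by $f\mapsto f^*$, which is exactly the Punctual Torelli Principle for $X$. The main obstacle is the eigenvalue computation of the second paragraph — setting up the correct monomial basis of the Jacobian ring of the Klein form and checking that the $\Omega$-twist $\det\tilde\sigma$ vanishes modulo $p$ — so that the eigenvalue sets come out to be exactly the $S_q$ with no spurious global shift; the remainder is group theory together with the elementary inequality ruling out $\operatorname{PSL}_2(\mathbf F_p)$.
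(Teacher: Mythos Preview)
Your proof is correct and follows essentially the same approach as the paper: identify the $\sigma$-eigenvalues on $\h^{n-q,q}(X)_{\prim}$ via Griffiths' residue basis so that $C^{n-q,q}$ corresponds to $S_q$, then combine \cref{theoautextHS} with the known structure of $\Aut(X)$ from \cref{theoautklein1} and \cref{theoautklein2} and the faithfulness of the action on cohomology to match $\Aut(X)$ with $\Aut(H)/\{\pm1\}$. You are more explicit than the paper in checking that the $\Omega$-twist $\det\tilde\sigma$ vanishes modulo $p$, in ruling out the $\operatorname{PSL}_2(\mathbf{F}_p)$ alternative via the inequality $n+2<(p-1)/2$ for $(n,d)\neq(3,3)$, and in arguing injectivity of $f\mapsto f^*$ by hand (the paper cites \cite[Corollary~1.3.18]{Huy23}), but the strategy is identical.
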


\begin{proof}
It follows from Griffiths basis theorem \cite{gr69} that all classes of the form
$$
\omega_\beta=\res\left(\frac{x^\beta\Omega}{K^{q+1}}\right)^{n-q,q} \ \ , \hspace{1cm}\beta=(\beta_0,\ldots,\beta_{n+1})\in\{0,1,\ldots,d-2\}^{n+2}
$$
form a basis of eigenvectors for the automorphism of Hodge structure induced by $\sigma$ (the diagonal automorphism \eqref{autdiag} of order $p$). Furthermore
$$
\omega_\beta\in V\left(\zeta_p^{\sum_{i=0}^{n+1}\beta_i(1-d)^i}\right).
$$
Since $(1-d)$ has order $n+2$ modulo $p$, the result follows from \cref{theoautklein1}, \cref{theoautklein2} and \cref{theoautextHS}. Indeed, $\Aut(X)\simeq \ZZ/m\ZZ\rtimes \ZZ/(n+2)\ZZ $ acts faithfully on the middle cohomology by \cite[Corollary 1.3.18]{Huy23} and the previous results imply that $\Aut(\operatorname{H}^n(X,\ZZ)_{\prim}))/\{\pm 1\}<\langle \sigma\rangle\rtimes \Delta\simeq \ZZ/m\ZZ\rtimes\ZZ/(n+2)\ZZ$.
\end{proof}

\begin{remark}\label{rmk:Sq}
Note that each term of $S_q$ is in correspondence with some eigenvalue of the automorphism of the Hodge structure induced by the order $p$ diagonal automorphism $\sigma$ given by \eqref{autdiag}. The extremality of the Hodge structure implies that each eigenvalue has multiplicity one, i.e., $S_q$ cannot have repeated terms.
\end{remark}

\begin{example}\label{example:cubic-5fold}
Condition \eqref{conditionTorelli} is satisfied for the Klein cubic fivefold. In fact let $n=5$, $d=3$, $q=2$, then
\begin{align*}
S_2=&\left\{-21,-18,-16,-11,-10,-7,-6,-4,-1,\right. \\ &\left.2,3,5,8,9,12,13,14,15,17,19,20\right\}.    
\end{align*}

Checking element by element we get
$$
\{m\in (\ZZ/43\ZZ)^\times: m\cdot S_2=S_2 \}=\{-8,-2,1,4,11,16,21\}=\langle -2\rangle.
$$
Hence the Klein cubic fivefold satisfies the Punctual Torelli Principle. Note that in this case the Hodge structure is of level one, and so $\Aut(H)=\Aut(J(X))$ where $J(X)=(\h^{3,2}(X))^*/\h_5(X,\ZZ)\in\mathcal{A}_{21}$ is its polarized intermediate Jacobian. In particular, we deduce the isomorphism
\[
\operatorname{Aut}(X) \simeq \operatorname{Aut}(J(X)) \slash \{\pm 1\}.
\]
Similarly, if $X$ is the Klein quartic threefold in the 4-dimensional projective space and $J(X)=(\h^{2,1}(X))^*/\h_3(X,\ZZ)\in\mathcal{A}_{30}$ is the corresponding polarized intermediate Jacobian, then using the same argument we deduce that
\[
\operatorname{Aut}(X) \simeq \operatorname{Aut}(J(X)) \slash \{\pm 1\}.
\]
Our definition of extremal Hodge structure was inspired from these examples and the notion of extremal abelian variety used in the study of the zero dimensional stratum of the singular locus of $\mathcal{A}_g$ in \cite{GAMPZ05}. 
\end{example}

\begin{theorem}\label{thm:punctual-torelli}
The Punctual Torelli Principle holds for a Klein hypersurface of Wagstaff type of dimension $n\ge 3$ and degree $d\ge 3$ in the following cases:
\begin{itemize}
    \item[(a)] $d\mid n+3$.
    \item[(b)] $d=3$ and $n\ge 5$.
\end{itemize}
\end{theorem}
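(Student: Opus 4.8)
The plan is to verify, in each of the two cases, the number–theoretic condition \eqref{conditionTorelli}, after which the statement is immediate from \cref{thmtorellimodular}. (For the single value $(n,d)=(3,3)$ that is allowed by (a) but excluded in \cref{thmtorellimodular}, the Punctual Torelli Principle for the Klein cubic threefold is classical: it follows from the Torelli theorem of Clemens--Griffiths together with Adler's and Roulleau's identification $\Aut(X)=\operatorname{PSL}_2(\mathbf{F}_{11})$, and it lands in the exceptional branch of \cref{theoautextHS}.) So from now on assume $(n,d)\neq(3,3)$ and write $\phi(\beta)=\sum_{i=0}^{n+1}\beta_i(1-d)^i\in\ZZ/p\ZZ$, so that $S_q$ is the $\phi$–image of the set of ``digit vectors'' $\beta\in\{0,\dots,d-2\}^{n+2}$ with $\sum_i\beta_i=d(q+1)-n-2$.

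First I would record the inclusion that always holds. Since $(1-d)^{n+2}\equiv 1\pmod p$, multiplication by $1-d$ sends $\phi(\beta)$ to $\phi(\beta')$, where $\beta'_j=\beta_{j-1}$ with indices taken cyclically mod $n+2$; as $\beta'$ is again a valid digit vector with the same digit sum, this gives $(1-d)\cdot S_q=S_q$ for every $q$, hence $\langle 1-d\rangle$ is always contained in the left–hand side of \eqref{conditionTorelli}. The entire content is therefore the reverse inclusion for $\Sigma:=\{m\in(\ZZ/p\ZZ)^\times : m\cdot S_q=S_q\ \forall q\}$, namely $\Sigma\subseteq\langle 1-d\rangle$.

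For case (a), the key point is that the hypothesis $d\mid n+3$ produces a Hodge piece whose eigenvalue set is exactly $\langle 1-d\rangle$. Put $q_0:=\tfrac{n+3}{d}-1$; then $0\le q_0\le n$ (since $d\mid n+3$ forces $d\le n+3$, and $d\ge 3$ gives $d(n+1)\ge n+3$), and $d(q_0+1)-n-2=1$. Hence $S_{q_0}$ is the $\phi$–image of the digit vectors of digit sum $1$, i.e.\ of the standard basis vectors $e_0,\dots,e_{n+1}$ (here $d-2\ge 1$ is used), so $S_{q_0}=\{(1-d)^j : 0\le j\le n+1\}=\langle 1-d\rangle$, the last equality because $1-d$ has multiplicative order exactly $n+2$ modulo $p$. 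Now for any $m\in\Sigma$ we have, using $1=(1-d)^0\in S_{q_0}$, that $m=m\cdot 1\in m\cdot S_{q_0}=S_{q_0}=\langle 1-d\rangle$; thus $\Sigma\subseteq\langle 1-d\rangle$ and \eqref{conditionTorelli} holds. Together with \cref{thmtorellimodular} this settles case (a); note it also forces $|\Delta|=|\langle 1-d\rangle|=n+2<\tfrac{p-1}{2}$, so we are automatically in the non-$\operatorname{PSL}_2$ branch of \cref{theoautextHS}.

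Case (b) is where the actual difficulty lies, and the trick above does not apply: when $d=3$ the digit sum $d(q+1)-n-2$ never equals $1$ unless $3\mid n$, so there is no small piece $S_{q_0}$ to exploit, and verifying $\Sigma=\langle 1-d\rangle$ for $p=\tfrac{2^{n+2}+1}{3}$ becomes a genuine question about base $-2$ expansions modulo $p$ graded by digit sum. This is the elementary number–theoretic statement isolated in \cref{conj}, which we establish for all cubic Klein hypersurfaces of Wagstaff type in the \hyperref[appendix]{Appendix} (the instance $n=5$, the Klein cubic fivefold, being precisely the explicit computation carried out in \cref{example:cubic-5fold}). Granting \cref{conj}, condition \eqref{conditionTorelli} holds and \cref{thmtorellimodular} yields case (b). I expect this number–theoretic step — ruling out every $m$ outside $\langle -2\rangle$ that simultaneously preserves all digit–sum level sets — to be the main obstacle; everything else is the bookkeeping above together with the reductions of \S\ref{sec4}--\S\ref{sec5}.
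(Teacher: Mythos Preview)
Your proposal is correct and follows essentially the same route as the paper: verify \eqref{conditionTorelli} and invoke \cref{thmtorellimodular}, with case~(a) handled by the observation that $S_{q_0}=\langle 1-d\rangle$ when the digit sum equals~$1$. The only place the paper is sharper is in case~(b): since $n+2$ is prime (hence coprime to $3$), exactly one of $3\mid n+3$ or $3\mid n+4$ holds; the paper notes that the former subcase is literally an instance of~(a), and only in the latter does one pass to the Appendix, working with the single level set $S_q=\{(-2)^i+(-2)^j:0\le i<j\le n+1\}$ of digit sum~$2$ rather than invoking the full \cref{conj}. Your separate treatment of $(n,d)=(3,3)$ via Clemens--Griffiths and the $\operatorname{PSL}_2$ branch of \cref{theoautextHS} is a welcome addition that the paper's proof leaves implicit.
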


\begin{proof}
In both cases we have to verify that \eqref{conditionTorelli} holds. Following the same notation as in \cref{thmtorellimodular}, if $d\mid n+3$ there exists some $q\in\{0,1,\ldots,n\}$ such that $d(q+1)-n-2=1$, and so
$$
S_q=\langle 1-d\rangle<(\ZZ/p\ZZ)^\times.
$$
Then it is clear that $m\in (\ZZ/p\ZZ)^\times$ satisfies $m\cdot \langle 1-d\rangle=\langle 1-d\rangle$ if and only if $m\in\langle 1-d\rangle$. This shows (a). For the cubic case, since $n+2\ge 7$ is a prime number we have that $\gcd(3,n+2)=1$ and so $3\mid n+3$ or $3\mid n+4$. The case $3\mid n+3$ was already covered by (a). Suppose now that $3\mid n+4$, then there exists some $q\in\{0,1,\ldots,n\}$ such that $3(q+1)-n-2=2$ and so
$$
S_q=\{(-2)^i+(-2)^j\in\ZZ/p\ZZ: 0\le i<j\le n+1\}.
$$
We claim that if $m\cdot S_q=S_q$ then $m\in\langle -2\rangle<(\ZZ/p\ZZ)^\times$. This follows from an elementary combinatorial argument which is detailed in the Appendix. 
\end{proof}

We expect that \cref{thm:punctual-torelli} holds for all Klein hypersurfaces of Wagstaff type. This leads us to the following elementary number theory conjecture.    

\begin{conjecture}
\label{conj}
Consider natural numbers $n,d\ge 3$ with $(n,d)\neq (3,3)$, such that $n+2$ and $p:=\frac{(d-1)^{n+2}+1}{d}$ are prime numbers. For each $q\in\{0,1,\ldots,n\}$ let
$$
S_q:=\left\{\sum_{i=0}^{n+1}\beta_i(1-d)^i\in \ZZ/p\ZZ: 0\le \beta_i\le d-2, \sum_{i=0}^{n+1}\beta_i=d(q+1)-n-2\right\}.
$$
Then
$$
\{m\in (\ZZ/p\ZZ)^\times: m\cdot S_q=S_q \ \ , \ \forall q=0,\ldots,n\}=\langle 1-d\rangle<(\ZZ/p\ZZ)^\times.
$$
\end{conjecture}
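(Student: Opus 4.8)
The plan is to pin down the multiplicative stabilizer of each $S_q$ by exploiting the cyclic subgroup $H:=\langle 1-d\rangle\le (\ZZ/p\ZZ)^\times$, which by \cref{gal13} has order $n+2$. First I would record the easy inclusion $H\subseteq\bigcap_q\operatorname{Stab}(S_q)$: if $s=\sum_{i=0}^{n+1}\beta_i(1-d)^i$ lies in $S_q$, then, using $(1-d)^{n+2}\equiv 1\pmod p$, the product $(1-d)s$ equals $\sum_i\beta'_i(1-d)^i$ where $\beta'$ is the cyclic shift $\beta'_i=\beta_{i-1}$ of $\beta$ (indices mod $n+2$), and $\beta'$ satisfies exactly the same constraints, so $(1-d)\cdot S_q=S_q$ for every $q$. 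Hence each $S_q$ (minus the point $0$, which does not occur when $S_q$ indexes a nonzero Hodge piece, by \cref{rmk:Sq}) is a union of $H$-cosets, and only the reverse inclusion has content. I would then pass to the cyclic quotient $G:=(\ZZ/p\ZZ)^\times/H$ of order $(p-1)/(n+2)$: writing $\overline{S_q}\subseteq G$ for the image, the conjecture becomes $\bigcap_q\operatorname{Stab}_G(\overline{S_q})=\{1\}$. Equivalently, since the $\zeta_p^s$ with $s\neq 0$ form a $\Q$-basis of $\Q(\zeta_p)$, the periods $\eta_q:=\sum_{s\in S_q}\zeta_p^s$, which all lie in $\Q(\zeta_p)^H$, the subfield of degree $(p-1)/(n+2)$, must generate it over $\Q$.

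For the cases that look within reach it suffices to find one value of $q$ whose $\overline{S_q}$ already has trivial stabilizer in $G$. When $d\mid n+3$ there is a $q$ with $d(q+1)-n-2=1$; then the only admissible $\beta$ are the standard basis vectors, so $S_q=\{(1-d)^i:0\le i\le n+1\}=H$, and the multiplicative stabilizer of the subgroup $H$ is $H$ itself, which settles case (a). For the cubic case, after excluding the subcase $3\mid n+3$ just handled we have $3\mid n+4$, and I would take the $q$ with $\sum_i\beta_i=2$, so that $S_q=\{(-2)^i+(-2)^j:0\le i<j\le n+1\}=\bigsqcup_{k=1}^{(n+1)/2}(1+\zeta^k)H$ with $\zeta:=-2$, a disjoint union of $(n+1)/2$ cosets (the two-to-one structure $k\leftrightarrow -k$ coming from $(1+\zeta^k)/(1+\zeta^{-k})=\zeta^k$). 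If $m$ stabilizes $S_q$, multiplication by $\overline m$ permutes these cosets, so all orbits of $\langle\overline m\rangle$ have the common size $\operatorname{ord}(\overline m)$ and hence $\operatorname{ord}(\overline m)\mid (n+1)/2$; tracking the admissible images of the distinguished coset $(1+\zeta)H=-H$ then forces $\overline m=\overline 1$. This last step is the ``elementary combinatorial argument'' deferred to the Appendix, and I expect it to require a genuine case analysis rather than a one-line trick.

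The hard part is obtaining case-free control of $\bigcap_q\operatorname{Stab}_G(\overline{S_q})$ for general $d\ge 4$. There the $S_q$ are indexed by all compositions $\beta$ with parts in $\{0,\dots,d-2\}$ and fixed sum $d(q+1)-n-2$ — their sizes being the coefficients of $(1+x+\dots+x^{d-2})^{n+2}$ — so no $q$ gives $S_q$ equal to a single coset or to a restricted sumset of one coset, and the images $\overline{S_q}$ become genuinely intricate subsets of $G$. Proving that the $\eta_q$ generate $\Q(\zeta_p)^H$ appears to need new input, for instance identifying the $\eta_q$ with Jacobi sums attached to the Klein hypersurface and using the arithmetic of the associated Gr\"ossencharacter, or a uniform combinatorial identity valid for all admissible $(n,d)$; this is where I expect the main obstruction to lie. (The hypothesis that $p=\frac{(d-1)^{n+2}+1}{d}$ be prime is of course restrictive, but it is not a logical obstruction, since the conjecture is vacuous whenever no such prime exists.)
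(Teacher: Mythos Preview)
Your plan matches the paper's partial progress exactly in scope: the statement is recorded as a conjecture, with proofs given only for the two special cases (a) $d\mid n+3$ and (b) $d=3$, $n\ge5$. For (a) your argument is precisely the paper's. For (b), your reduction to the level $\sum\beta_i=2$ and the coset decomposition $S_q=\bigsqcup_{k=1}^{(n+1)/2}(1+\zeta^k)H$ are correct and in fact cleaner than what the paper writes out; the divisibility constraint $\operatorname{ord}(\overline m)\mid(n+1)/2$ is a genuine observation the paper does not isolate.

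However, the paper's actual completion of (b) in the Appendix follows a different route and uses one ingredient you do not mention: the no-repeated-terms property (\cref{rmk:Sq}) applied to $S_{q+1}$, not just to $S_q$. Concretely, from $m(1+(-2))=(-2)^i+(-2)^j$ and a second relation $m(1+(-2)^a)=(-2)^r+(-2)^s$ one \emph{adds} to express $m(-2)^a$ as a sum of four powers of $-2$; doing this for two values of $a$ and adjoining a common fifth power $(-2)^\theta$ yields two distinct representations of the same element as a sum of five distinct powers, contradicting multiplicity one in $S_{q+1}$. A short case split (on the sizes of three auxiliary index sets $A,B,C$) handles the residual situations. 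Your proposed endgame---track where the distinguished coset $-H$ lands---is plausible, but as you yourself concede it is not yet a proof, and it is not clear that the structure of $\overline{S_q}$ alone suffices; the paper's leverage comes precisely from coupling $S_q$ with $S_{q+1}$. For general $d\ge4$ your assessment coincides with the paper's: the question is left open, with only computational verification offered.
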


We verified \cref{conj} computationally with the aid of a \texttt{Python} script kindly written for us by Fabián Levicán. In \cref{table}, we show the Klein hypersurfaces of Wagstaff type with $n\leq 15$ and $d\leq 11$. If the hypersurface is of Wagstaff type, the table gives the value of $p$. Otherwise, the table returns ``--''.

\begin{table}[!ht]
{\small  \begin{tabular}{ | c || c | c | c | c | c | c | c | c | c | c | c | c |}
    \hline	
    $n\backslash d$ & 3 & 4 & 5 & 6 & 7 & 8 & 9 & 10 & 11 \\
    \hline\hline
    3 & -- & 61 & -- & 521 & -- & -- & -- & -- & 9091 \\
    \hline
    5 & 43 & 547 & -- & -- & -- & -- & -- & -- & 909091 \\
    \hline
    9 & 683 & -- & -- & -- & 51828151 & -- & -- & -- & -- \\ \hline
    11 & 2731 & 398581 & -- & -- & -- & -- & -- & -- & -- \\    \hline
    15 & 43691 & -- & -- & -- & -- & 29078814248401 & -- 
    & -- & -- \\    \hline
  \end{tabular}} \vspace{2ex}
  \caption{Klein hypersurfaces of Wagstaff type.}
  \label{table}
\end{table}

\begin{remark}
\label{rmkk3}
For the Klein quartic surface $X\subseteq \mathbf{P}^3$, the Strong Torelli Theorem holds \cite{pyatetskii1971torelli}, but it is the only Klein hypersurface with unknown automorphism group. Its Picard rank was computed by Shioda in \cite[\S 4]{Shi86}, but we can give an alternative proof using automorphisms. Indeed, despite it is not of Wagstaff type, we can still use the explicit action of the diagonal automorphism 
$$
\sigma=\operatorname{diag}(\zeta_{80},\zeta_{80}^{-3},\zeta_{80}^9,\zeta_{80}^{-27})
$$
to compute the spectral decomposition of $\h^2(X,\C)_{\prim}$ obtaining that
$$
\h^{2,0}(X)\oplus \h^{0,2}(X)=V(-i)\oplus V(i).
$$
In consequence, $\h^{2,0}(X)\oplus \h^{0,2}(X)$ is invariant under the Galois action of the group $\Gal(\Q(\zeta_{20})/\Q)$, and so it is defined over $\Q$. By \cite[Proposition 1]{Bea14} this implies that $\rho(X)=h^{1,1}(X)=20$. As a consequence of the previous computations, together with a classical result by Shioda and Inose \cite[Theorem 5]{SI77}, we have that the group $\operatorname{Aut}(X)$ is infinite.
\end{remark}

\section*{Appendix: Proof of \cref{conj} for $d=3$} \label{appendix}

In this appendix we provide an elementary combinatorial argument to show that \eqref{conditionTorelli} always holds for $d=3$, completing in this way the proof of \cref{thm:punctual-torelli} (b).

To do so, we may assume that $n\geq 11$ since the case $n=5$ is covered by \cref{example:cubic-5fold} and the case $n=9$ was already treated in \cref{thm:punctual-torelli} (a). Consider $m\in \ZZ/p\ZZ$ such that $m\cdot S_q=S_q$. Then
\begin{equation}
\label{igualdad1}
m(1+(-2))=(-2)^i+(-2)^j.
\end{equation}
If $|j-i|\equiv1$ (mod $n+2$), then $(-2)^i+(-2)^j=(-2)^a(1+(-2))$ for some $a\in\{i,j\}$. In fact, if $j=i+1+(n+2)k$ then $(-2)^i + (-2)^j = (-2)^i(1+(-2))$ as $(-2)$ has order $n+2$ in $(\mathbf{Z}\slash p\mathbf{Z})^\times$, and similarly if $i-j\equiv 1$ (mod $n+2$). Therefore, $
m=(-2)^a
$
as claimed. 

If $|j-i|\not\equiv 1$ (mod $n+2$) consider the following sets:
$$
A:=\{a\in\{2,3,\ldots,n+1\}: m(1+(-2)^a)=(-2)^r+(-2)^s \ , \ \{r,s\}\cap\{i,j\}=\varnothing\},
$$
$$
B:=\{b\in\{2,3,\ldots,n+1\}: m(1+(-2)^b)=(-2)^i+(-2)^s \ , \ s\neq j\},
$$
$$
C:=\{c\in\{2,3,\ldots,n+1\}: m(1+(-2)^c)=(-2)^r+(-2)^j \ , \ r\neq i\}.
$$
They form a partition of the set
$$
\{2,3,\ldots,n+1\}=A\sqcup B\sqcup C.
$$
Let us separate the analysis into three cases:

\vspace{2mm}

\noindent {\textbf{Case 1}:} $\#A\ge 4$.

\vspace{2mm}

In this case pick some $a\in A$ such that 
\begin{equation}
\label{igualdad2}
m(1+(-2)^a)=(-2)^r+(-2)^s.
\end{equation}
If $|a|=|r-s|$, by a similar argument as above (where $|j-i|\equiv 1$ (mod $n+2$)) we can show that $m=(-2)^{a'}$ for some $a'$ as claimed. If not, consider the quadrilateral corresponding to the four points $i,j,r,s\in\ZZ/(n+2)\ZZ$ (see \cref{Fig1}). 

\begin{figure}[ht]
    \centering
\begin{tikzpicture}[x=0.75pt,y=0.75pt,yscale=-1,xscale=1,scale=0.45]

\draw  [dash pattern={on 0.84pt off 2.51pt}] (203,151.32) .. controls (203,80.69) and (260.26,23.42) .. (330.9,23.42) .. controls (401.54,23.42) and (458.8,80.69) .. (458.8,151.32) .. controls (458.8,221.96) and (401.54,279.22) .. (330.9,279.22) .. controls (260.26,279.22) and (203,221.96) .. (203,151.32) -- cycle ;
\draw   (330.9,23.42) -- (389.75,36.83) -- (432.75,73.83) -- (455.7,121.6) -- (456.7,174.45) -- (434.2,227.95) -- (397.2,260.95) -- (354.75,275.83) -- (303.75,275.83) -- (263.2,259.45) -- (224.75,222.83) -- (205.75,176.83) -- (206.75,118.42) -- (231.75,69.42) -- (276.75,35.42) -- cycle ;
\draw [color={rgb, 255:red, 0; green, 0; blue, 0 }  ,draw opacity=1 ]   (231.75,69.42) -- (432.75,73.83) ;
\draw [shift={(432.75,73.83)}, rotate = 1.25] [color={rgb, 255:red, 0; green, 0; blue, 0 }  ,draw opacity=1 ][fill={rgb, 255:red, 0; green, 0; blue, 0 }  ,fill opacity=1 ][line width=0.75]      (0, 0) circle [x radius= 3.35, y radius= 3.35]   ;
\draw [shift={(231.75,69.42)}, rotate = 1.25] [color={rgb, 255:red, 0; green, 0; blue, 0 }  ,draw opacity=1 ][fill={rgb, 255:red, 0; green, 0; blue, 0 }  ,fill opacity=1 ][line width=0.75]      (0, 0) circle [x radius= 3.35, y radius= 3.35]   ;
\draw [color={rgb, 255:red, 74; green, 144; blue, 226 }  ,draw opacity=1 ]   (432.75,73.83) -- (456.7,174.45) ;
\draw [color={rgb, 255:red, 208; green, 2; blue, 27 }  ,draw opacity=1 ]   (231.75,69.42) -- (224.75,222.83) ;
\draw [color={rgb, 255:red, 0; green, 0; blue, 0 }  ,draw opacity=1 ]   (456.7,174.45) -- (224.75,222.83) ;
\draw [shift={(224.75,222.83)}, rotate = 168.22] [color={rgb, 255:red, 0; green, 0; blue, 0 }  ,draw opacity=1 ][fill={rgb, 255:red, 0; green, 0; blue, 0 }  ,fill opacity=1 ][line width=0.75]      (0, 0) circle [x radius= 3.35, y radius= 3.35]   ;
\draw [shift={(456.7,174.45)}, rotate = 168.22] [color={rgb, 255:red, 0; green, 0; blue, 0 }  ,draw opacity=1 ][fill={rgb, 255:red, 0; green, 0; blue, 0 }  ,fill opacity=1 ][line width=0.75]      (0, 0) circle [x radius= 3.35, y radius= 3.35]   ;

\draw (210,49.4) node [anchor=north west][inner sep=0.75pt]  [color={rgb, 255:red, 0; green, 0; blue, 0 }  ,opacity=1 ]  {$i$};
\draw (200,225.4) node [anchor=north west][inner sep=0.75pt]  [color={rgb, 255:red, 0; green, 0; blue, 0 }  ,opacity=1 ]  {$j$};
\draw (466,173.4) node [anchor=north west][inner sep=0.75pt]  [color={rgb, 255:red, 0; green, 0; blue, 0 }  ,opacity=1 ]  {$r$};
\draw (445,56.4) node [anchor=north west][inner sep=0.75pt]  [color={rgb, 255:red, 0; green, 0; blue, 0 }  ,opacity=1 ]  {$s$};
\draw (232,135.4) node [anchor=north west][inner sep=0.75pt]  [color={rgb, 255:red, 208; green, 2; blue, 27 }  ,opacity=1 ]  {$|j-i|$};
\draw (346,116.4) node [anchor=north west][inner sep=0.75pt]  [color={rgb, 255:red, 74; green, 144; blue, 226 }  ,opacity=1 ]  {$|r-s|$};
\end{tikzpicture}
\caption{} \label{Fig1}
\end{figure}

If $|r-s|\equiv |j-i|$ (mod $n+2$), then 
$$
m(1+(-2)^a)=(-2)^r+(-2)^s
=(-2)^{\alpha}((-2)^i+(-2)^j)
=m((-2)^{\alpha}+(-2)^{\alpha+1}),
$$
for some $\alpha$ and so 
$$
1+(-2)^a=(-2)^{\alpha}+(-2)^{\alpha+1},
$$
which is impossible since $S_q$ has no repeated terms by \cref{rmk:Sq}. 

We claim that there exists $a'\in A\setminus \{a\}$ such that 
\begin{equation}
\label{igualdad3}
m(1+(-2)^{a'})=(-2)^{r'}+(-2)^{s'},
\end{equation}
and the quadrilateral $\{i,j,r,s\}$ is not a rotation of $\{i,j,r',s'\}$. Otherwise, the quadrilateral $\{i,j,r,s\}$ has another side or diagonal of length $|j-i|$, and it is elementary to see that there are at most two of them (see \cref{Fig2}).

\begin{figure}[ht]
    \centering
\begin{tikzpicture}[x=0.75pt,y=0.75pt,yscale=-1,xscale=1,scale=0.45]

\draw  [dash pattern={on 0.84pt off 2.51pt}] (38,148.32) .. controls (38,77.69) and (95.26,20.42) .. (165.9,20.42) .. controls (236.54,20.42) and (293.8,77.69) .. (293.8,148.32) .. controls (293.8,218.96) and (236.54,276.22) .. (165.9,276.22) .. controls (95.26,276.22) and (38,218.96) .. (38,148.32) -- cycle ;
\draw   (165.9,20.42) -- (224.75,33.83) -- (267.75,70.83) -- (290.7,118.6) -- (291.7,171.45) -- (269.2,224.95) -- (232.2,257.95) -- (189.75,272.83) -- (138.75,272.83) -- (98.2,256.45) -- (59.75,219.83) -- (40.75,173.83) -- (41.75,115.42) -- (66.75,66.42) -- (111.75,32.42) -- cycle ;
\draw  [dash pattern={on 0.84pt off 2.51pt}] (371,148.32) .. controls (371,77.69) and (428.26,20.42) .. (498.9,20.42) .. controls (569.54,20.42) and (626.8,77.69) .. (626.8,148.32) .. controls (626.8,218.96) and (569.54,276.22) .. (498.9,276.22) .. controls (428.26,276.22) and (371,218.96) .. (371,148.32) -- cycle ;
\draw   (498.9,20.42) -- (557.75,33.83) -- (600.75,70.83) -- (623.7,118.6) -- (624.7,171.45) -- (602.2,224.95) -- (565.2,257.95) -- (522.75,272.83) -- (471.75,272.83) -- (431.2,256.45) -- (392.75,219.83) -- (373.75,173.83) -- (374.75,115.42) -- (399.75,66.42) -- (444.75,32.42) -- cycle ;
\draw [color={rgb, 255:red, 208; green, 2; blue, 27 }  ,draw opacity=1 ]   (165.9,20.42) -- (59.75,219.83) ;
\draw    (269.2,224.95) -- (59.75,219.83) ;
\draw    (269.2,224.95) -- (165.9,20.42) ;
\draw [shift={(165.9,20.42)}, rotate = 243.2] [color={rgb, 255:red, 0; green, 0; blue, 0 }  ][fill={rgb, 255:red, 0; green, 0; blue, 0 }  ][line width=0.75]      (0, 0) circle [x radius= 3.35, y radius= 3.35]   ;
\draw [shift={(269.2,224.95)}, rotate = 243.2] [color={rgb, 255:red, 0; green, 0; blue, 0 }  ][fill={rgb, 255:red, 0; green, 0; blue, 0 }  ][line width=0.75]      (0, 0) circle [x radius= 3.35, y radius= 3.35]   ;
\draw    (165.9,20.42) -- (232.2,257.95) ;
\draw    (59.75,219.83) -- (232.2,257.95) ;
\draw [shift={(232.2,257.95)}, rotate = 12.47] [color={rgb, 255:red, 0; green, 0; blue, 0 }  ][fill={rgb, 255:red, 0; green, 0; blue, 0 }  ][line width=0.75]      (0, 0) circle [x radius= 3.35, y radius= 3.35]   ;
\draw [shift={(59.75,219.83)}, rotate = 12.47] [color={rgb, 255:red, 0; green, 0; blue, 0 }  ][fill={rgb, 255:red, 0; green, 0; blue, 0 }  ][line width=0.75]      (0, 0) circle [x radius= 3.35, y radius= 3.35]   ;
\draw [color={rgb, 255:red, 74; green, 144; blue, 226 }  ,draw opacity=1 ]   (269.2,224.95) -- (232.2,257.95) ;
\draw [color={rgb, 255:red, 208; green, 2; blue, 27 }  ,draw opacity=1 ]   (392.75,219.83) -- (498.9,20.42) ;
\draw    (392.75,219.83) -- (602.2,224.95) ;
\draw [shift={(602.2,224.95)}, rotate = 1.4] [color={rgb, 255:red, 0; green, 0; blue, 0 }  ][fill={rgb, 255:red, 0; green, 0; blue, 0 }  ][line width=0.75]      (0, 0) circle [x radius= 3.35, y radius= 3.35]   ;
\draw [shift={(392.75,219.83)}, rotate = 1.4] [color={rgb, 255:red, 0; green, 0; blue, 0 }  ][fill={rgb, 255:red, 0; green, 0; blue, 0 }  ][line width=0.75]      (0, 0) circle [x radius= 3.35, y radius= 3.35]   ;
\draw    (498.9,20.42) -- (602.2,224.95) ;
\draw [color={rgb, 255:red, 126; green, 211; blue, 33 }  ,draw opacity=1 ]   (373.75,173.83) -- (602.2,224.95) ;
\draw    (373.75,173.83) -- (498.9,20.42) ;
\draw [shift={(498.9,20.42)}, rotate = 309.21] [color={rgb, 255:red, 0; green, 0; blue, 0 }  ][fill={rgb, 255:red, 0; green, 0; blue, 0 }  ][line width=0.75]      (0, 0) circle [x radius= 3.35, y radius= 3.35]   ;
\draw [shift={(373.75,173.83)}, rotate = 309.21] [color={rgb, 255:red, 0; green, 0; blue, 0 }  ][fill={rgb, 255:red, 0; green, 0; blue, 0 }  ][line width=0.75]      (0, 0) circle [x radius= 3.35, y radius= 3.35]   ;
\draw    (392.75,219.83) -- (373.75,173.83) ;

\draw (252.7,244.85) node [anchor=north west][inner sep=0.75pt]  [color={rgb, 255:red, 74; green, 144; blue, 226 }  ,opacity=1 ]  {$|r-s|$};
\draw (149,-10) node [anchor=north west][inner sep=0.75pt]  [color={rgb, 255:red, 0; green, 0; blue, 0 }  ,opacity=1 ]  {$i$};
\draw (46,223.4) node [anchor=north west][inner sep=0.75pt]  [color={rgb, 255:red, 0; green, 0; blue, 0 }  ,opacity=1 ]  {$j$};
\draw (276,218.4) node [anchor=north west][inner sep=0.75pt]  [color={rgb, 255:red, 0; green, 0; blue, 0 }  ,opacity=1 ]  {$s$};
\draw (234.2,261.35) node [anchor=north west][inner sep=0.75pt]  [color={rgb, 255:red, 0; green, 0; blue, 0 }  ,opacity=1 ]  {$r$};
\draw (105,129.4) node [anchor=north west][inner sep=0.75pt]  [color={rgb, 255:red, 208; green, 2; blue, 27 }  ,opacity=1 ]  {$|j-i|$};
\draw (484,-10) node [anchor=north west][inner sep=0.75pt]  [color={rgb, 255:red, 0; green, 0; blue, 0 }  ,opacity=1 ]  {$i$};
\draw (370,222.4) node [anchor=north west][inner sep=0.75pt]  [color={rgb, 255:red, 0; green, 0; blue, 0 }  ,opacity=1 ]  {$j$};
\draw (343,165.4) node [anchor=north west][inner sep=0.75pt]  [color={rgb, 255:red, 0; green, 0; blue, 0 }  ,opacity=1 ]  {$r'$};
\draw (610,224.4) node [anchor=north west][inner sep=0.75pt]  [color={rgb, 255:red, 0; green, 0; blue, 0 }  ,opacity=1 ]  {$s'$};
\draw (452,100.4) node [anchor=north west][inner sep=0.75pt]  [color={rgb, 255:red, 208; green, 2; blue, 27 }  ,opacity=1 ]  {$|j-i|$};
\draw (436.7,150.4) node [anchor=north west][inner sep=0.75pt]  [color={rgb, 255:red, 126; green, 211; blue, 33 }  ,opacity=1 ]  {$|r'-s'|$};

\end{tikzpicture}
\caption{} \label{Fig2}

\end{figure}

Say that the sides and diagonals (different from $\{i,j\}$) of length $|j-i|$ are $e_1$ and $e_2$. Let $f_k:=\{i,j,r,s\}\setminus e_k$ be its complementary side or diagonal (see \cref{Fig3}).

\begin{figure}[ht]
    \centering
\begin{tikzpicture}[x=0.75pt,y=0.75pt,yscale=-1,xscale=1,scale=0.5]

\draw  [dash pattern={on 0.84pt off 2.51pt}] (205,159.38) .. controls (205,88.74) and (262.26,31.47) .. (332.9,31.47) .. controls (403.54,31.47) and (460.8,88.74) .. (460.8,159.38) .. controls (460.8,230.01) and (403.54,287.28) .. (332.9,287.28) .. controls (262.26,287.28) and (205,230.01) .. (205,159.38) -- cycle ;
\draw   (332.9,31.47) -- (391.75,44.88) -- (434.75,81.88) -- (457.7,129.65) -- (458.7,182.5) -- (436.2,236) -- (399.2,269) -- (356.75,283.88) -- (305.75,283.88) -- (265.2,267.5) -- (226.75,230.88) -- (207.75,184.88) -- (208.75,126.47) -- (233.75,77.47) -- (278.75,43.47) -- cycle ;
\draw [color={rgb, 255:red, 208; green, 2; blue, 27 }  ,draw opacity=1 ]   (332.9,31.47) -- (226.75,230.88) ;
\draw [color={rgb, 255:red, 208; green, 2; blue, 27 }  ,draw opacity=1 ]   (436.2,236) -- (226.75,230.88) ;
\draw [color={rgb, 255:red, 208; green, 2; blue, 27 }  ,draw opacity=1 ]   (436.2,236) -- (332.9,31.47) ;
\draw [shift={(332.9,31.47)}, rotate = 243.2] [color={rgb, 255:red, 208; green, 2; blue, 27 }  ,draw opacity=1 ][fill={rgb, 255:red, 208; green, 2; blue, 27 }  ,fill opacity=1 ][line width=0.75]      (0, 0) circle [x radius= 3.35, y radius= 3.35]   ;
\draw [shift={(436.2,236)}, rotate = 243.2] [color={rgb, 255:red, 208; green, 2; blue, 27 }  ,draw opacity=1 ][fill={rgb, 255:red, 208; green, 2; blue, 27 }  ,fill opacity=1 ][line width=0.75]      (0, 0) circle [x radius= 3.35, y radius= 3.35]   ;
\draw [color={rgb, 255:red, 126; green, 211; blue, 33 }  ,draw opacity=1 ]   (332.9,31.47) -- (399.2,269) ;
\draw [color={rgb, 255:red, 126; green, 211; blue, 33 }  ,draw opacity=1 ]   (226.75,230.88) -- (399.2,269) ;
\draw [shift={(399.2,269)}, rotate = 12.47] [color={rgb, 255:red, 126; green, 211; blue, 33 }  ,draw opacity=1 ][fill={rgb, 255:red, 126; green, 211; blue, 33 }  ,fill opacity=1 ][line width=0.75]      (0, 0) circle [x radius= 3.35, y radius= 3.35]   ;
\draw [shift={(226.75,230.88)}, rotate = 12.47] [color={rgb, 255:red, 126; green, 211; blue, 33 }  ,draw opacity=1 ][fill={rgb, 255:red, 126; green, 211; blue, 33 }  ,fill opacity=1 ][line width=0.75]      (0, 0) circle [x radius= 3.35, y radius= 3.35]   ;
\draw [color={rgb, 255:red, 74; green, 144; blue, 226 }  ,draw opacity=1 ]   (436.2,236) -- (399.2,269) ;

\draw (316,5.45) node [anchor=north west][inner sep=0.75pt]  [color={rgb, 255:red, 0; green, 0; blue, 0 }  ,opacity=1 ]  {$i$};
\draw (213,234.45) node [anchor=north west][inner sep=0.75pt]  [color={rgb, 255:red, 0; green, 0; blue, 0 }  ,opacity=1 ]  {$j$};
\draw (443,229.45) node [anchor=north west][inner sep=0.75pt]  [color={rgb, 255:red, 0; green, 0; blue, 0 }  ,opacity=1 ]  {$s$};
\draw (401.2,272.4) node [anchor=north west][inner sep=0.75pt]  [color={rgb, 255:red, 0; green, 0; blue, 0 }  ,opacity=1 ]  {$r$};
\draw (322,207.45) node [anchor=north west][inner sep=0.75pt]  [color={rgb, 255:red, 208; green, 2; blue, 27 }  ,opacity=1 ]  {$e_{1}$};
\draw (393,118.45) node [anchor=north west][inner sep=0.75pt]  [color={rgb, 255:red, 208; green, 2; blue, 27 }  ,opacity=1 ]  {$e_{2}$};
\draw (342,157.45) node [anchor=north west][inner sep=0.75pt]  [color={rgb, 255:red, 126; green, 211; blue, 33 }  ,opacity=1 ]  {$f_{1}$};
\draw (290,248.45) node [anchor=north west][inner sep=0.75pt]  [color={rgb, 255:red, 126; green, 211; blue, 33 }  ,opacity=1 ]  {$f_{2}$};

\end{tikzpicture}
\caption{} \label{Fig3}
\end{figure}

Since $\#(A\setminus \{a\})\ge 3$, we can pick some $a'\in A$ satisfying \eqref{igualdad3} such that $|r'-s'|$ is different to the lengths of $f_1$ and $f_2$. Thus, $\{i,j,r',s'\}$ cannot be a rotation of $\{i,j,r,s\}$.

Adding \eqref{igualdad1} and \eqref{igualdad2}, we obtain
$$
m(-2)^a=(-2)^i+(-2)^j+(-2)^r+(-2)^s
$$
and so
\begin{equation}
\label{igualdad4}    
m=(-2)^{i-a}+(-2)^{j-a}+(-2)^{r-a}+(-2)^{s-a}.
\end{equation}
Similarly, adding \eqref{igualdad1} and \eqref{igualdad3}, we get
\begin{equation}
\label{igualdad5}    
m=(-2)^{i-a'}+(-2)^{j-a'}+(-2)^{r'-a'}+(-2)^{s'-a'}.
\end{equation}
Since $n\geq 7$, we can consider some $\theta\in\ZZ/(n+2)\ZZ\setminus\{i-a,j-a,r-a,s-a,i-a',j-a',r'-a',s'-a'\}$. The following number
\begin{align*}
(-2)^{i-a}+(-2)^{j-a}+&(-2)^{r-a}+(-2)^{s-a}+(-2)^\theta= \\&(-2)^{i-a'}+(-2)^{j-a'}+(-2)^{r'-a'}+(-2)^{s'-a'}+(-2)^\theta    
\end{align*}
has two different ways to be written as sum of five different powers of $(-2)$ in modulo $p$. In other words, the set $S_{q+1}$ has some repeated terms, contradicting \cref{rmk:Sq}. This contradiction completes the proof when $\#A\ge 4$.

\vspace{2mm}

\noindent \textbf{Case 2}: $\#A\le 3$ and $|j-i|\not\equiv 2$ (mod $n+2$).

\vspace{2mm}

In this case $\#B+\#C\ge n-3\ge 8$ thus one of them, say $B$, has at least $\#B\ge 4$. Pick some $b,b'\in B$ such that
\begin{equation}
\label{igualdad6}
m(1+(-2)^b)=(-2)^i+(-2)^s,   
\end{equation}
\begin{equation}
\label{igualdad7}
m(1+(-2)^{b'})=(-2)^i+(-2)^{s'},   
\end{equation}
with $s,s'\in\ZZ/(n+2)\ZZ\setminus\{i+1,i+2,j\}$. Adding \eqref{igualdad1} and \eqref{igualdad6} we get
$$
m(-2)^b=2(-2)^i+(-2)^j+(-2)^s=(-2)^{i+1}+(-2)^{i+2}+(-2)^j+(-2)^s
$$
and so
\begin{equation}
\label{igualdad8}
m=(-2)^{i+1-b}+(-2)^{i+2-b}+(-2)^{j-b}+(-2)^{s-b}.
\end{equation}
Similarly, adding \eqref{igualdad1} and \eqref{igualdad7} we get
\begin{equation}
\label{igualdad9}
m=(-2)^{i+1-b'}+(-2)^{i+2-b'}+(-2)^{j-b'}+(-2)^{s'-b'}.
\end{equation}
Since \eqref{igualdad8} and \eqref{igualdad9} are two different ways of writing $m$, adding some $(-2)^\theta$ for some $\theta\in\ZZ/(n+2)\ZZ\setminus\{i+1-b,i+2-b,j-b,s-b,i+1-b',i+2-b',j-b',s'-b'\}$ we get a contradiction as in Case 1.

\vspace{2mm}

\noindent{\textbf{Case 3}:} $\#A\le 3$ and $|j-i|\equiv 2$ (mod $n+2$).

\vspace{2mm}

In this case we can take $j=i+2$. Again, $\#B+\#C\ge n-3\ge 8$. If $\#C\ge 4$, then we can pick $c,c'\in C$ such that
\begin{equation}
\label{igualdad10}
m(1+(-2)^c)=(-2)^r+(-2)^j,   
\end{equation}
\begin{equation}
\label{igualdad11}
m(1+(-2)^{c'})=(-2)^{r'}+(-2)^j,   
\end{equation}
with $r,r'\in\ZZ/(n+2)\ZZ\setminus\{j+1,j+2,i\}$, and the contradiction follows in the same way as in Case 2. If $\#C\le 3$, then $\#B\ge 5$ and we can pick $b,b'\in B$ such that
\begin{equation}
\label{igualdad12}
m(1+(-2)^b)=(-2)^i+(-2)^s,   
\end{equation}
\begin{equation}
\label{igualdad13}
m(1+(-2)^{b'})=(-2)^i+(-2)^{s'},   
\end{equation}
with $s,s'\in\ZZ/(n+2)\ZZ\setminus\{i+1,i+3,i+4\}$. Adding \eqref{igualdad1} and \eqref{igualdad12} we get
$$
m(-2)^b=2(-2)^i+(-2)^{i+2}+(-2)^s=(-2)^{i+1}+(-2)^{i+3}+(-2)^{i+4}+(-2)^s
$$
and so
\begin{equation}
\label{igualdad14}
m=(-2)^{i+1-b}+(-2)^{i+3-b}+(-2)^{i+4-b}+(-2)^{s-b}.
\end{equation}
Similarly with \eqref{igualdad1} and \eqref{igualdad13} we get
\begin{equation}
\label{igualdad15}
m=(-2)^{i+1-b'}+(-2)^{i+3-b'}+(-2)^{i+4-b'}+(-2)^{s'-b'}.
\end{equation}
From this we get a contradiction in the same way as in Case 1.

\bibliographystyle{alpha}
\bibliography{nfolds}
\end{document}